\tikzset{>=stealth',
  head/.style = {fill = white, text=black},
  plaque/.style = {draw, rectangle, minimum size = 10mm, fill=white}, 
   posplaque/.style = {draw, star,star points=7,star point ratio=0.8, minimum size = 10mm, fill=white}, 
   Kplaque/.style = {draw, rectangle, minimum size = 10mm, fill=Gainsboro}, 
  newplaque/.style = {draw=red, ellipse, minimum size = 10mm,ultra thick, fill=white}, 
  posex/.style={->,thick},
  lift/.style={right hook->},
  beta/.style={dashed, <->},
  pil/.style={->,thick},
  junct/.style = {draw,circle,inner sep=0.5pt,outer sep=0pt, fill=black}
  }
\newcommand{\qcolor}[1]{\textcolor{violet}{#1}}
\newcommand{\scolor}[1]{\textcolor{orange}{#1}}
\newcommand{\pcolor}[1]{\textcolor{Green}{#1}}
\newtheorem{theorem}{Theorem}[section]
\newtheorem{proposition}[theorem]{Proposition}
\newtheorem{corollary}[theorem]{Corollary}
\newtheorem{conjecture}[theorem]{Conjecture}
\theoremstyle{definition}
\newtheorem{definition}[theorem]{Definition}
\newenvironment{example}
  {\pushQED{\qed}\examplex}
  {\popQED\endexamplex}
\theoremstyle{remark}
\newtheorem{remark}[theorem]{Remark}
\numberwithin{equation}{section}
\newcommand{\SSYT}{\ensuremath{\mathrm{SSYT}}}
\newcommand{\id}{\ensuremath{\mathrm{id}}}
\newcommand{\xx}{\ensuremath{\mathbf{x}}}
\newcommand{\wt}{\ensuremath{\mathrm{wt}}}
\newcommand{\sort}[1]{\overleftarrow{#1}}
\newcommand{\reduced}{\ensuremath{\mathrm{Red}}}
\newcommand{\rev}{\ensuremath{\mathrm{rev}}}
\newcommand{\lswap}{\ensuremath{\mathrm{lswap}}}
\newcommand{\Qlswap}{\ensuremath{\mathrm{Qlswap}}}
\newcommand{\sym}{\ensuremath{\mathrm{Sym}}}
\newcommand{\qsym}{\ensuremath{\mathrm{QSym}}}
\newcommand{\asym}{\ensuremath{\mathrm{ASym}}}
\newcommand{\poly}{\ensuremath{\mathrm{Poly}}}
\newcommand{\schub}{\ensuremath{\mathfrak{S}}}
\newcommand{\groth}{\ensuremath{\overline{\mathfrak{S}}}}
\newcommand{\slide}{\ensuremath{\mathfrak{F}}}
\newcommand{\glide}{\ensuremath{\overline{\mathfrak{F}}}}
\newcommand{\mslide}{\ensuremath{\mathfrak{M}}}
\newcommand{\atom}{\ensuremath{\mathfrak{A}}}
\newcommand{\lascouxatom}{\ensuremath{\overline{\mathfrak{A}}}}
\newcommand{\kaon}{\ensuremath{\overline{\mathfrak{P}}}}
\newcommand{\particle}{\ensuremath{\mathfrak{P}}}
\newcommand{\monomial}{\ensuremath{\mathfrak{X}}}
\newcommand{\fundamental}{\ensuremath{F}}
\newcommand{\key}{\ensuremath{\mathfrak{D}}}
\newcommand{\qkey}{\ensuremath{\mathfrak{Q}}}
\newcommand{\lascoux}{\ensuremath{\overline{\mathfrak{D}}}}
\newcommand{\qlascoux}{\ensuremath{\overline{\mathfrak{Q}}}}
\newcommand{\schur}{\ensuremath{s}}
\newcommand{\sgroth}{\ensuremath{\overline{s}}}
\newcommand{\qschur}{\ensuremath{S}}
\newcommand{\SSF}{\ensuremath{\mathsf{SST}}}
\newcommand{\ASSF}{\atom\SSF}
\newcommand{\LSSF}{\particle\SSF}
\newcommand{\HSSF}{\ensuremath{\mathsf{H}}\SSF}
\newcommand{\KSSF}{\key\SSF}
\newcommand{\PD}{\ensuremath{\mathrm{PD}}}
\newcommand{\QPD}{\ensuremath{\mathrm{QPD}}}
\newcommand{\KD}{\ensuremath{\mathrm{KD}}}
\newcommand{\RD}{\ensuremath{\mathrm{RD}}}
\newcommand{\Sh}{\ensuremath{\mathrm{Sh}}}
\newcommand{\Des}{\ensuremath{\mathrm{Des}}}
\newcommand{\Runs}{\ensuremath{\mathrm{Runs}}}
\newcommand{\Bumpruns}{\ensuremath{\mathrm{BumpRuns}}}
\newcommand{\Bump}{\ensuremath{\mathrm{Bump}}}
\newcommand{\Comp}{\ensuremath{\mathrm{Comp}}}
\newcommand{\excise}[1]{}%{$\star$\textsc{#1}$\star$}
\newcommand\turn{%\color{red}%
\begin{picture}(10,10)
\thicklines
\put(0,10){\oval(10,10)[br]}
\put(10,0){\oval(10,10)[tl]}
\end{picture}}
\newcommand\tail{%\color{red}%
\begin{picture}(10,10)
\thicklines
\put(0,10){\oval(10,10)[br]}
\end{picture}}
\newcommand\cross{%\color{blue}%
\begin{picture}(10,10)
\thicklines
\put(5,0){\line(0,1){10}}
\put(0,5){\line(1,0){10}}
\end{picture}}
\newcommand\gridify[1]{\vbox to 10\unitlength{\vss\hbox to 10\unitlength{\hss$_{#1}$\hss}\vss}}
\newcommand\pipes[1]{\vtop{\let\\=\cr
\setlength\baselineskip{-10000pt}
\setlength\lineskiplimit{10000pt}
\setlength\lineskip{0pt}
\halign{&\gridify{##}\cr#1\crcr}}}
\newlength\cellsize \setlength\cellsize{16\unitlength}
\newcommand\cellify[1]{\def\thearg{#1}\def\nothing{}%
\ifx\thearg\nothing\vrule width0pt height\cellsize depth0pt%
  \else\hbox to 0pt{\usebox2\hss}\fi%
  \vbox to 16\unitlength{\vss\hbox to 16\unitlength{\hss$#1$\hss}\vss}}
\newcommand\tableau[1]{\vtop{\let\\=\cr
\setlength\baselineskip{-16000pt}
\setlength\lineskiplimit{16000pt}
\setlength\lineskip{0pt}
\halign{&\cellify{##}\cr#1\crcr}}}
\newcommand{\cir}[1]{\def\thearg{#1}\def\nothing{}%
\ifx\thearg\nothing\vrule width0pt height\cellsize depth0pt%
  \else\hbox to 0pt{\usebox4\hss}\fi%
  \vbox to 16\unitlength{\vss\hbox to 16\unitlength{\hss$#1$\hss}\vss}}
\newcommand\nocellify[1]{\def\thearg{#1}\def\nothing{}%
\ifx\thearg\nothing\vrule width0pt height\cellsize depth0pt%
  \else\hbox to 0pt{\hss}\fi%
  \vbox to 16\unitlength{\vss\hbox to 16\unitlength{\hss$#1$\hss}\vss}}
\newcommand\notableau[1]{\vtop{\let\\=\cr
\setlength\baselineskip{-16000pt}
\setlength\lineskiplimit{16000pt}
\setlength\lineskip{0pt}
\halign{&\nocellify{##}\cr#1\crcr}}}
\definecolor{boxgray}{gray}{.7}
\begin{document}

%%%%%%%%%%%%%%%%%%%%%%%%%%%%%%%%%%%%%%%%%%%%%%%%%%%%%%%%%%%%
%  TITLE PAGE information
%%%%%%%%%%%%%%%%%%%%%%%%%%%%%%%%%%%%%%%%%%%%%%%%%%%%%%%%%%%%

%     [Short Title]{Full Title}
\title{Asymmetric function theory}  

%    Information for first author
\author[O. Pechenik]{Oliver Pechenik}
\address[OP]{Department of Mathematics, University of Michigan, Ann Arbor, MI 48109, USA}
\email{pechenik@umich.edu}
%\thanks{}

%    Information for second author
\author[D. Searles]{Dominic Searles}
\address[DS]{Department of Mathematics and Statistics, University of Otago, Dunedin 9016, New Zealand}
\email{dominic.searles@otago.ac.nz}

%    General info
%\subjclass[2010]{Primary 05E05; Secondary 14M15}

\date{\today}

%\dedicatory{}

\keywords{}

\begin{abstract}
The classical theory of symmetric functions has a central position in algebraic combinatorics, bridging aspects of representation theory, combinatorics, and enumerative geometry. More recently, this theory has been fruitfully extended to the larger ring of quasisymmetric functions, with corresponding applications. Here, we survey recent work extending this theory further to general asymmetric polynomials.
\end{abstract}

\maketitle
%\tableofcontents

%%%%%%%%%%%%%%%%%%%%%%%%%%%%%%%%%%%%%%%%%%%%%%%%%%%%%%%%%%%%%%%%
%
\section{The three worlds: symmetric, quasisymmetric, and general polynomials}
%
%%%%%%%%%%%%%%%%%%%%%%%%%%%%%%%%%%%%%%%%%%%%%%%%%%%%%%%%%%%%%%%%
\label{sec:introduction}
One of the gems of 20th-century mathematics is theory of \emph{symmetric functions} and \emph{symmetric polynomials}, as expounded in the classic textbooks \cite{EC2, Manivel, Macdonald}. First, we will review aspects of this theory. Then, we discuss the more general theory of \emph{quasisymmetric functions and polynomials}, a very active area of contemporary research. Finally, we turn to the combinatorial theory of general asymmetric polynomials. While this seems naively like a very simple object, the polynomial ring turns out to have a rich and beautiful combinatorial structure analogous to that of the symmetric and quasisymmetric worlds, but far less explored.

In each world, we will consider a variety of additive bases. The power of the combinatorial theory comes from having the following three characteristics: 
\begin{enumerate}
\item positive combinatorial rules for change of basis,
\item positive combinatorial multiplication rules in various bases, and
\item algebraic/geometric interpretations of the basis elements.
\end{enumerate}

\section{The symmetric world}\label{sec:sym}
 Consider the $\mathbb{Z}$-algebra $\poly_n \coloneqq \mathbb{Z}[x_1, \dots, x_n]$ of integral multivariate polynomials. It carries a natural action of the symmetric group $\mathcal{S}_n$ on $n$ letters, where the simple transposition $(i \; i+1)$ acts on $f \in \poly_n$ by swapping the variables $x_i$ and $x_{i+1}$. Let $\sym_n \coloneqq \poly_n^{\mathcal{S}_n}$, the $\mathcal{S}_n$-invariants. It is easy to see that $\sym_n$ is a subring of $\poly_n$; we call it the {\bf ring of symmetric polynomials in $n$ variables}. $\sym_n$ is moreover a graded ring, inheriting the grading by degree from $\poly_n$. We denote the degree $m$ homogeneous piece of a graded ring $R$ by $R^{(m)}$.

For $m \leq n$, we can map $\sym_n$ onto $\sym_m$ by setting the last $n-m$ variables equal to $0$. The inverse limit of the $\{ \sym_n \}$ with respect to these restriction maps is called the {\bf ring of symmetric functions} $\sym$, although its elements are not functions, but rather formal power series in infinitely-many variables. Classically, one generally prefers to study $\sym$; however, we will usually prefer the essentially equivalent theory of $\sym_n$, as it extends more naturally to the asymmetric setting that is our focus.

We will consider four of the most important additive bases of $\sym_n$: the monomial, elementary, homogeneous, and Schur bases.

\begin{figure}[h]
\begin{tikzpicture}
\node[posplaque] (schur) {\scolor{$\schur_\lambda$}};
\node[below left=0.5 and 1 of schur, posplaque] (e) {\scolor{$e_\lambda$}};
\node[above left =0.5 and 1 of schur, posplaque] (h) {\scolor{$h_\lambda$}};
\node[right=1 of schur,posplaque] (m) {\scolor{$m_\lambda$}};
  \begin{scope}[nodes = {draw = none}]
    \path (e) edge[posex]   (schur)
    (h) edge[posex]  (schur)
    (schur) edge[posex] (m)
      ;
  \end{scope}
\end{tikzpicture}
\caption{The four bases of $\sym_n$ considered here. The arrows denote that the basis at the head refines the basis at the tail. All four bases have positive structure coefficients.}\label{fig:sym}
\end{figure}
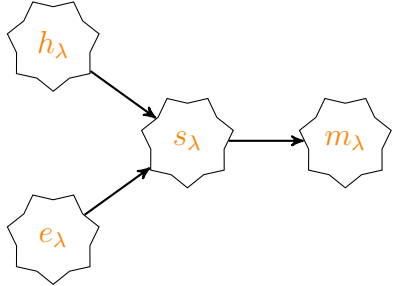

For a {\bf weak composition} $a$ (i.e., an infinite sequence of nonnegative integers with finite sum), define a monomial 
\[
\xx^a \coloneqq x_1^{a_1}x_2^{a_2} \cdots.
\] 
For a {\bf partition} $\lambda$ (i.e., a weakly decreasing weak composition), let 
\[ m_\lambda \coloneqq \sum_a \xx^a,
\]
where the sum is over all distinct weak compositions that can be obtained by rearranging the parts of $\lambda$. If $\xx^a$ is a monomial of the symmetric function $f$, then necessarily $\xx^b$ is also a monomial of $f$ for every $b$ that can be obtained by rearranging the parts of $a$. Thus $f$ can be written uniquely as a finite sum of the {\bf monomial symmetric functions} $m_\lambda$. Therefore $\{ m_\lambda \}$ is a $\mathbb{Z}$-linear basis of $\sym$ and the dimension of $\sym^{(m)}$ is the number of partitions of $m$.

We may consider a second action of $\mathcal{S}_n$ on $\poly_n$ where a permutation acts by permuting variables and then multiplying by the sign of the permutation. Note that this merely amounts to twisting the orginal action by tensoring with the $1$-dimensional sign representation of $\mathcal{S}_n$. The invariants of this twisted action are the {\bf alternating polynomials in $n$ variables}, $v_n \sym_n$. These are precisely the polynomials where setting any two variables equal yields $0$. The sum of two alternating polynomials is alternating, but the product is generally not. Hence $v_n \sym_n$ is not a subring of $\poly_n$, although it is a module over $\sym_n$. As with $\sym_n$, $v_n \sym_n$ is graded by degree, although for technical reasons one might prefer to shift the degree by $\binom{n}{2}$.  

Let $v_n := \prod_{1 \leq i < j \leq n} (x_i - x_j)$ be the {\bf Vandermonde determinant}. This is an alternating polynomial and moreover divides every other alternating polynomial. The quotients are necessarily symmetric. Hence every alternating polynomial can be written as $v_n$ times a symmetric polynomial. (This fact justifies the notation $v_n \sym_n$ for the module of alternating polynomials.) 

For a weak composition $a$ of length $n$, define \[ \tilde{j}_a := \sum_{\sigma \in S_n} {\rm sgn}(\sigma) x^{\sigma(a)}. \] Note that if $\xx^a$ is a term of the alternating polynomial $f$, then so is every other term of $\tilde{j}_a$. Moreover if $a$ has any repeated parts, then clearly $\tilde{j}_a =0$. Hence $v_n \sym_n$ has a natural basis of polynomials
$ \tilde{j}_{\theta},$ for $\theta$ ranging over {\bf strict partitions}, that is partitions with distinct parts.

Every strict partition may be written uniquely as $\delta + \lambda$, where $\delta = (n-1, n-2, \dots, 0)$, $\lambda$ is a partition, and the sum is componentwise. We write $j_\lambda \coloneqq \tilde{j}_{\delta + \lambda}$, to obtain a basis of $v_n \sym$ indexed by partitions. That is, the dimension of the space of alternating polynomials of degree $m + \binom{n}{2}$ equals the dimension of the space of symmetric polynomials of degree $m$. Indeed, we can even identify the isomorphism; it is just multiplication by $v_n = \tilde{j}_\delta = j_{(0)}$. If we shifted the grading of $v_n\sym_n$ as suggested above (so that $v_n$ is in degree $0$), then multiplication by $v_n$ is an isomorphism $\sym_n \to v_n\sym_n$ of \emph{graded} $\sym$-modules.

The basis of $\sym_n$ obtained by pulling back the $j_\lambda$ basis of $v_n\sym_n$ is not the basis of monomial symmetric polynomials, but rather something more interesting. These important objects \[s_\lambda \coloneqq \frac{j_\lambda}{v_n}\] are called the {\bf Schur polynomials}. 

Although the Schur polynomials are clearly symmetric and hence can be expanded in the monomial basis, it is a remarkable surprise that these expansion coefficients are uniformly \emph{positive}. A recurring theme in this survey will be such instances of positive basis changes between {\it a priori} unrelated bases. 

A combinatorial formula manifesting the monomial-positivity of Schur polynomials was given by Littlewood. Given a partition $\lambda = (\lambda_1, \lambda_2, \ldots)$, we identify $\lambda$ with its English-orientation Young diagram, consisting of $\lambda_1$ left-justified boxes in the top row, $\lambda_2$ left-justified boxes in the second row, etc. A {\bf semistandard (Young) tableau} of shape $\lambda$ is an assignment of a positive integer to each box of the Young diagram such that the labels weakly increase left to right across rows and strictly increase down columns. The {\bf weight} of a tableau $T$ is the weak composition $\wt(T) \coloneqq (a_1, a_2, \ldots)$, where $a_i$ records the number of boxes labeled $i$.

\begin{theorem}[Littlewood]\label{thm:littlewood_formula}
$s_\lambda = \sum_{T \in \SSYT(\lambda)} \xx^{\wt(T)}$. \qed
\end{theorem}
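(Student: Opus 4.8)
The plan is to test Littlewood's formula against the defining (bialternant) description $s_\lambda = j_\lambda/v_n$. Since $\poly_n$ is an integral domain and $v_n \neq 0$, it suffices to prove the polynomial identity $v_n \cdot \widetilde{s}_\lambda = j_\lambda = \tilde{j}_{\delta+\lambda}$, where I abbreviate $\widetilde{s}_\lambda \coloneqq \sum_{T \in \SSYT(\lambda)} \xx^{\wt(T)}$, taking the entries of tableaux in $\{1,\dots,n\}$ and padding $\lambda$ to exactly $n$ (possibly empty) rows.

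First I would show $\widetilde{s}_\lambda \in \sym_n$. For each $i$, introduce the Bender--Knuth involution $\xi_i \colon \SSYT(\lambda) \to \SSYT(\lambda)$: call a cell containing $i$ or $i+1$ \emph{frozen} if the cell directly below it contains $i+1$ or the cell directly above it contains $i$, and \emph{free} otherwise; within each row the free $i$'s precede the free $(i{+}1)$'s, say $p$ of the former and $q$ of the latter, and $\xi_i$ replaces that run by $q$ free $i$'s followed by $p$ free $(i{+}1)$'s. One checks that $\xi_i$ is a well-defined involution on $\SSYT(\lambda)$ with $\wt(\xi_i T) = (i \; i{+}1)\cdot\wt(T)$; hence $\widetilde{s}_\lambda$ is invariant under every adjacent transposition of the variables, and so under all of $\mathcal{S}_n$.

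Now I exploit this symmetry. Using $v_n = \det(x_i^{n-j}) = \sum_{\sigma\in\mathcal S_n}\mathrm{sgn}(\sigma)\,\sigma(\xx^\delta)$ (with $\sigma$ acting by permuting variables) and the $\mathcal S_n$-invariance of $\widetilde s_\lambda$, we get $v_n \widetilde{s}_\lambda = \sum_{\sigma \in \mathcal{S}_n} \mathrm{sgn}(\sigma)\,\sigma\!\left(\xx^\delta \widetilde{s}_\lambda\right)$, the antisymmetrization of $\xx^\delta \widetilde{s}_\lambda = \sum_{T} \xx^{\delta + \wt(T)}$. The antisymmetrizer annihilates any monomial with a repeated exponent and carries $\xx^a$, for $a$ with pairwise distinct entries, to $\pm\tilde{j}_{a^{\downarrow}}$, where $a^{\downarrow}$ is the decreasing rearrangement and the sign is the parity of the sorting permutation $\tau$; moreover every strictly decreasing sequence of $n$ nonnegative integers equals $\mu + \delta$ for a unique partition $\mu$. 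Therefore
\[
v_n\,\widetilde{s}_\lambda \;=\; \sum_{\substack{T \in \SSYT(\lambda) \\ \delta + \wt(T)\ \text{has distinct entries}}} \mathrm{sgn}(\tau_T)\; j_{\mu(T)}, \qquad \text{where } \mu(T) + \delta = \bigl(\delta + \wt(T)\bigr)^{\downarrow}.
\]
The superstandard tableau $U$ (row $i$ filled with $i$'s) has $\wt(U) = \lambda$ and $\delta + \wt(U) = \delta+\lambda$ already decreasing, so it contributes $+j_\lambda$. It then remains to show that all other terms cancel, that is, to exhibit a sign-reversing involution on $\{T \in \SSYT(\lambda) : T \neq U,\ \delta + \wt(T)\ \text{has distinct entries}\}$ preserving the multiset $\{\delta_k + \wt(T)_k\}_k$ and reversing the parity of $\tau_T$. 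Granting that, dividing by $v_n$ gives $\widetilde{s}_\lambda = s_\lambda$, which is Littlewood's formula.

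The main obstacle is precisely this last cancellation, which is the combinatorial core of the theorem. Its delicacy is that a single partition weight $\mu$ can be the weight of several tableaux, all contributing $+j_\mu$, and these must be balanced against tableaux whose weight is a different (non-decreasing) rearrangement of $\mu$; the naive move of swapping the weight at a descent of $\delta + \wt(T)$ overshoots and can leave the allowed set, so genuine care is required. Classically this is resolved either by a hands-on jeu-de-taquin-type sign-reversing involution, or --- the route I would prefer to present in a survey --- by first deducing the Jacobi--Trudi identity $s_\lambda = \det\bigl(h_{\lambda_i - i + j}\bigr)_{1 \le i,j \le n}$ from the bialternant and then invoking the Lindström--Gessel--Viennot lemma, which expands that determinant as the generating function of $n$-tuples of non-intersecting lattice paths and identifies those tuples bijectively with $\SSYT(\lambda)$; there the sign cancellation is packaged once and for all inside the LGV lemma. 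Either way, the symmetry step is routine and essentially the whole weight of the proof sits in this single combinatorial cancellation.
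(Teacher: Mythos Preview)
The paper is a survey and does not prove Theorem~\ref{thm:littlewood_formula}; the trailing \qed\ signals that the result is stated without argument. So there is no paper proof to compare against, and the question is simply whether your proposal stands on its own.

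Your outline is sound up to the point you yourself flag. The Bender--Knuth involution is correctly described and does establish $\widetilde{s}_\lambda \in \sym_n$; the antisymmetrization identity $v_n\,\widetilde{s}_\lambda = \sum_\sigma \mathrm{sgn}(\sigma)\,\sigma(\xx^\delta \widetilde{s}_\lambda)$ is correct; and the superstandard tableau $U$ is indeed the unique $T\in\SSYT(\lambda)$ with $\wt(T)=\lambda$, so it contributes $+j_\lambda$. But the sentence ``Granting that, dividing by $v_n$ gives $\widetilde{s}_\lambda = s_\lambda$'' is exactly where the theorem lives: you have reduced the statement to a cancellation you do not carry out. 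As written, this is a roadmap, not a proof.

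Of the two completions you name, both are standard and either would close the gap. If you go the Jacobi--Trudi/LGV route, be explicit that you are using two separate facts: (i) the algebraic identity $j_\lambda/v_n = \det(h_{\lambda_i-i+j})_{i,j}$, which follows from the bialternant definition by elementary column operations once one knows $h_k = j_{(k)}/v_n$, and (ii) the LGV identification of that determinant with the tableau generating function. If instead you want the direct involution, it is cleaner to work on pairs $(\sigma,T)$ in the expansion $v_n\widetilde{s}_\lambda = \sum_{\sigma,T}\mathrm{sgn}(\sigma)\,\xx^{\sigma(\delta)+\wt(T)}$ rather than on tableaux alone: one shows that the fixed points are exactly the pairs $(\sigma, U_\sigma)$ contributing $\mathrm{sgn}(\sigma)\,\xx^{\sigma(\lambda+\delta)}$, summing to $j_\lambda$. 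Either way, the missing step is not a detail---it is the theorem.
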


\begin{example}
We have $s_{(2,1)}(x_1, x_2) = x_1^2 x_2 + x_1 x_2^2$, owing to the two semistandard tableaux
\[
\ytableaushort{11,2} \quad \ytableaushort{12,2}.
\]
\end{example}

We now turn to the last two bases of $\sym_n$ that we will consider.
For a partition $\lambda = (\lambda_1, \lambda_2, \dots)$, we define the {\bf elementary symmetric function} $e_\lambda$ by 
\[
e_\lambda \coloneqq \prod_i s_{\lambda_i}
\]
and the {\bf (complete) homogeneous symmetric function} by
\[
h_\lambda \coloneqq \prod_i s_{1^{\lambda_i}}.
\]

It is not obvious that either of these families yields a basis of $\sym_n$; nevertheless, each of them does, as was originally established by Isaac Newton. It also is not obvious that, like the Schur basis, the elementary and homogeneous bases expand positively in the $m_\lambda$. However, in fact, something much stronger is true: each $e_\lambda$ and each $h_\lambda$ is a positive sum of Schur polynomials. This positivity is a consequence of an even more remarkable positivity property:

\begin{theorem}\label{thm:Clmn_positive}
The Schur basis of $\sym_n$ has positive structure coefficients.
In other words, for any partitions $\lambda$ and $\mu$, the product $s_\lambda \cdot s_\mu$ expands as a positive sum of Schur polynomials. 
\end{theorem}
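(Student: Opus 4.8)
The plan is to prove the \emph{Littlewood--Richardson rule}, which makes the positivity transparent by supplying an explicit combinatorial count of the structure coefficients. I will work with the semistandard tableau model of Theorem~\ref{thm:littlewood_formula} together with its extension to \emph{skew} shapes: for partitions $\kappa \subseteq \nu$, a semistandard tableau of skew shape $\nu/\kappa$ is a filling of the boxes of $\nu$ lying outside $\kappa$ that weakly increases along rows and strictly increases down columns, and we set $s_{\nu/\kappa} \coloneqq \sum_T \xx^{\wt(T)}$, the sum over all such fillings.

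First I would realize the product $s_\lambda \cdot s_\mu$ as a single skew Schur polynomial. Let $\ell$ be the number of nonzero parts of $\mu$, and form the skew shape $\nu/\kappa$ with $\nu = (\lambda_1 + \mu_1, \dots, \lambda_1 + \mu_\ell, \lambda_1, \lambda_2, \dots)$ and $\kappa = (\lambda_1^{\ell})$ (that is, $\lambda_1$ repeated $\ell$ times, then zeros). Pictorially, this places the diagram of $\mu$, shifted $\lambda_1$ columns east, directly to the northeast of the diagram of $\lambda$, so that the two pieces occupy disjoint sets of rows and disjoint sets of columns. Because of this disjointness the semistandardness conditions decouple completely: a tableau of shape $\nu/\kappa$ is exactly an independent choice of $U \in \SSYT(\lambda)$ and $V \in \SSYT(\mu)$, with $\wt(T) = \wt(U) + \wt(V)$. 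Hence Theorem~\ref{thm:littlewood_formula} gives $s_\lambda \cdot s_\mu = s_{\nu/\kappa}$, and it remains only to expand an arbitrary skew Schur polynomial positively in the Schur basis.

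For that second step the tool is Sch\"utzenberger's \emph{jeu de taquin}: from a skew semistandard tableau $T$ one repeatedly performs inward sliding moves into the empty cells of the skew region, eventually arriving at a straight-shape semistandard tableau $\mathrm{rect}(T)$. The \emph{fundamental theorem of jeu de taquin} asserts that $\mathrm{rect}(T)$ is independent of the order of slides, and moreover that for each straight shape $\rho$ the number of $T \in \SSYT(\nu/\kappa)$ with $\mathrm{rect}(T) = W$ is the same for every semistandard tableau $W$ of shape $\rho$. Granting this, sorting $\SSYT(\nu/\kappa)$ by the shape of $\mathrm{rect}(T)$ and using that $\wt$ is unchanged by slides yields $s_{\nu/\kappa} = \sum_\rho c^\rho_{\lambda\mu}\, s_\rho$, where $c^\rho_{\lambda\mu}$ is the number of $T \in \SSYT(\nu/\kappa)$ rectifying to the single tableau of shape $\rho$ whose $i$th row consists entirely of $i$'s. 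These counts are manifestly nonnegative integers, which is exactly the claim.

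The hard part is the fundamental theorem of jeu de taquin invoked above --- both the order-independence of rectification and the fiber-size statement that lets us assemble an honest $s_\rho$ rather than a lone monomial. I would prove it in the standard way, by passing to reading words: each elementary slide changes the row reading word of a tableau only by one of the \emph{Knuth (plactic) relations}, and two straight-shape tableaux have Knuth-equivalent reading words if and only if they are equal, the latter checked via the Robinson--Schensted--Knuth insertion map, which is constant on Knuth classes. Verifying that every slide induces a Knuth relation is the genuinely technical, case-by-case portion of the argument; the rest is bookkeeping. Finally, it is worth recording two routes that avoid the combinatorics altogether: the defining identity $s_\lambda = j_\lambda / v_n$ is the Weyl character formula, so $s_\lambda$ is the character of the irreducible polynomial representation of $\GL_n$ indexed by $\lambda$, whence $s_\lambda \cdot s_\mu$ is the character of a tensor product and decomposes into irreducible characters with nonnegative multiplicities by complete reducibility; alternatively, the Schur polynomials represent Schubert classes in the cohomology ring of a Grassmannian, and there the structure constants enumerate the points of a transverse triple intersection of Schubert varieties, hence are nonnegative.
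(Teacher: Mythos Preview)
Your outline is correct and in fact supplies more detail than the paper itself does. This being a survey, the paper does not prove Theorem~\ref{thm:Clmn_positive}: it simply catalogues the three standard explanations---polynomial $\GL_n$-representations, Schubert calculus on Grassmannians, and the combinatorial Littlewood--Richardson rule (stated without proof as Theorem~\ref{thm:LRrule})---and refers the reader to textbooks such as \cite{Fulton:book,Manivel}. Your proposal takes the combinatorial route and actually sketches the jeu de taquin proof of the rule, which is one of the canonical arguments, and then records the representation-theoretic and geometric alternatives at the end, so you cover everything the paper mentions and more. Two small remarks on the translation. First, the paper states the LR rule in its Yamanouchi (ballot-word) form, counting skew tableaux of shape $\nu/\lambda$ and content $\mu$ whose reading word is Yamanouchi, whereas your formulation counts skew tableaux rectifying to the superstandard tableau of shape $\mu$; these are equivalent, since a skew semistandard tableau is Yamanouchi precisely when it rectifies to that superstandard tableau. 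Second, the fiber-size assertion you invoke (that the number of $T\in\SSYT(\nu/\kappa)$ with $\mathrm{rect}(T)=W$ depends only on the shape of $W$) is true but is usually proved first for \emph{standard} tableaux and then transferred to the semistandard setting via standardization or the fundamental-quasisymmetric expansion; you may want to flag that extra step when you write out the details.
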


\begin{corollary}
For any $\lambda$, $e_\lambda$ and $h_\lambda$ are both Schur-positive, and hence monomial-positive.
\end{corollary}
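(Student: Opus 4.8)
The plan is to deduce the corollary directly from Theorem~\ref{thm:Clmn_positive} (positivity of the Schur structure coefficients) together with Littlewood's formula (Theorem~\ref{thm:littlewood_formula}); no new combinatorics is needed, so the argument will be short.

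First I would establish Schur-positivity. By definition $e_\lambda = \prod_i s_{(\lambda_i)}$ and $h_\lambda = \prod_i s_{(1^{\lambda_i})}$ are each a finite product of Schur polynomials (of single-row, respectively single-column, shape; the empty product, occurring when $\lambda$ is empty, is $1 = s_\emptyset$). So it suffices to show that any product $s_{\mu^{(1)}} s_{\mu^{(2)}} \cdots s_{\mu^{(k)}}$ of Schur polynomials is Schur-positive, which I would do by induction on $k$. The cases $k \le 1$ are immediate; for the inductive step, write the product of the first $k-1$ factors as $\sum_\nu c_\nu s_\nu$ with all $c_\nu \ge 0$ (inductive hypothesis), multiply by $s_{\mu^{(k)}}$, and distribute: each $s_\nu \cdot s_{\mu^{(k)}}$ is a nonnegative integer combination of Schur polynomials by Theorem~\ref{thm:Clmn_positive}, and a nonnegative combination of nonnegative combinations is again nonnegative. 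Hence $e_\lambda$ and $h_\lambda$ expand with nonnegative coefficients in the Schur basis of $\sym_n$.

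Next I would derive monomial-positivity from Schur-positivity. By Theorem~\ref{thm:littlewood_formula}, each Schur polynomial $s_\mu = \sum_{T \in \SSYT(\mu)} \xx^{\wt(T)}$ is a sum of monomials with nonnegative integer coefficients; since $s_\mu$ is symmetric, grouping these monomials into $\mathcal{S}_n$-orbits shows that the expansion of $s_\mu$ in the monomial basis $\{ m_\nu \}$ has nonnegative coefficients. Therefore any Schur-positive polynomial is monomial-positive, and combining this with the previous paragraph gives that $e_\lambda$ and $h_\lambda$ are monomial-positive.

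The honest assessment of the main obstacle: there isn't one of substance here — all the depth is packaged inside the assumed Theorem~\ref{thm:Clmn_positive}, whose own proof (via the Littlewood--Richardson rule or an equivalent device) is the genuine work. The only points requiring any care are bookkeeping: the observation that a nonnegative combination of Schur-positive functions is Schur-positive (used to bootstrap from the two-factor rule to arbitrary products), and the observation that Littlewood's enumeration yields positivity in the monomial \emph{basis}, not merely a positive sum of individual monomials — both recorded above.
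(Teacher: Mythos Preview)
Your proof is correct and follows essentially the same route as the paper: invoke Theorem~\ref{thm:Clmn_positive} to conclude that products of Schur polynomials are Schur-positive (you spell out the easy induction on the number of factors, which the paper leaves implicit), then use Theorem~\ref{thm:littlewood_formula} to pass from Schur-positivity to monomial-positivity. Your added remarks about the base case and about grouping monomials into $\mathcal{S}_n$-orbits are fine clarifications but not substantively different from the paper's argument.
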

\begin{proof}
By Theorem~\ref{thm:Clmn_positive}, any product of Schur polynomials is Schur-positive. Since $e_\lambda$ and $h_\lambda$ are defined as products of special Schur polynomials, they are then Schur-positive. By Theorem~\ref{thm:littlewood_formula}, Schur polynomials are monomial-positive. Hence, any Schur-positive polynomial, in particular $e_\lambda$ or $h_\lambda$, is also monomial-positive.
\end{proof}

By commutativity and the definitions, it is transparent that both the elementary and homogeneous bases of $\sym_n$ also have positive 
structure coefficients.

There are a variety of distinct ways to establish Theorem~\ref{thm:Clmn_positive}. The most fundamental explanations involve interpreting the theorem algebraically or geometrically. For example, one can establish that $\sym$ is isomorphic to the ring of polynomial representations of the general linear group in such a way that the Schur functions are in one-to-one correspondence with the irreducible representations. Under this identification, decomposing the tensor product of two irreducible representations into irreducibles corresponds to expanding the product of two Schur functions in the Schur basis. Hence, Theorem~\ref{thm:Clmn_positive} follows. 

Similarly, one can identify the Schur functions of homogeneous degree $k$ with the irreducible representations of the symmetric group $\mathcal{S}_n$ in such a way that multiplying Schur functions corresponds to taking an `induction product' of the corresponding representations. Again, since the induction product representation is necessarily a direct sum of irreducible representations, we recover Theorem~\ref{thm:Clmn_positive}. For more details on these representation-theoretic proofs, see, e.g., \cite{Fulton:book,Manivel}.

A geometric approach is to identify $\sym$ with the Chow ring of complex Grassmannians, the classifying spaces for complex vector bundles. A Grassmannian comes with a natural cell decomposition by certain subvarieties called \emph{Schubert varieties}, yielding an effective basis of the Chow ring. Under the identification with $\sym$, this basis corresponds to the Schur polynomials. Multiplying Schur polynomials then corresponds to intersection product on Schubert varieties, and again Theorem~\ref{thm:Clmn_positive} follows. For more details on these geometric constructions, see, e.g., \cite{Fulton:book,Manivel,Gillespie}.

The above interpretations of Theorem~\ref{thm:Clmn_positive} deepen its significance and provide relatively easy proofs. Nonetheless, Theorem~\ref{thm:Clmn_positive} is on its face a purely combinatorial statement and so one might hope it also had a purely combinatorial proof. Indeed, such a proof exists. Even better, it gives an explicit transparently-positive formula for the positive integers appearing in the Schur expansion. This formula can then be combined with the algebraic and geometric interpretations above to compute with and to better understand aspects of representation theory and enumerative geometry.

We write $\lambda \subseteq \nu$ to mean that the Young diagram of the partition $\lambda$ is a subset of that for $\nu$. The set-theoretic difference is called the {\bf skew Young diagram} $\nu / \lambda$. A {\bf skew semistandard tableau} is a filling of a skew diagram by positive integers, such that rows weakly increase and columns strictly increase. Define the content of a skew tableau as for tableaux of partition shape. The {\bf reading word} of a (skew) tableau $T$ is the word given by reading the rows of $T$ from top to bottom and from right to left (like the ordinary reading order in Arabic or Hebrew). We say that $T$ is {\bf Yamanouchi} if every initial segment of its reading word contains at least as many $i$s as $(i+1)$s, for each positive integer $i$.

\begin{theorem}[Littlewood-Richardson rule]\label{thm:LRrule}
For partitions $\lambda$ and $\mu$, we have 
\[
s_\lambda \cdot s_\mu = \sum_\nu c_{\lambda, \mu}^\nu s_\nu,
\]
where $c_{\lambda, \mu}^\nu$ counts the number of Yamanouchi semistandard tableaux of skew shape $\nu / \lambda$ and content $\mu$.
\end{theorem}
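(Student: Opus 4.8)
The plan is to prove the rule via the combinatorics of jeu de taquin and the Robinson-Schensted-Knuth (RSK) correspondence, following the classical route that reduces the structure-coefficient statement to a statement about rectifications of skew tableaux. First I would recall that by Theorem~\ref{thm:littlewood_formula}, both sides of the claimed identity $s_\lambda \cdot s_\mu = \sum_\nu c_{\lambda,\mu}^\nu s_\nu$ can be interpreted as weighted counts of tableaux: the left side counts pairs of semistandard tableaux of shapes $\lambda$ and $\mu$, while the right side counts semistandard tableaux of shapes $\nu$ together with a multiplicity factor. The standard reformulation is to observe that $s_\lambda \cdot s_\mu$ equals the generating function for semistandard fillings of the skew shape $\nu/\lambda$ summed over all $\nu$ in an appropriate sense — more precisely, one fixes a large rectangle, places a fixed superstandard tableau $U_\lambda$ of shape $\lambda$, and studies skew tableaux of shape $\nu/\lambda$, so that $c_{\lambda,\mu}^\nu$ becomes the number of skew semistandard tableaux of shape $\nu/\lambda$ and content $\mu$ whose rectification (under jeu de taquin) is a fixed tableau of shape $\mu$.

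The key steps, in order, are: (1) define jeu de taquin slides on skew semistandard tableaux and prove the \emph{confluence} (well-definedness) of rectification — that the rectified straight-shape tableau does not depend on the order of slides; (2) establish that rectification preserves the Knuth equivalence class of the reading word, equivalently that it is compatible with the plactic monoid structure; (3) use Knuth equivalence to show that for each partition $\mu$ there is exactly one semistandard tableau of shape $\mu$ in each relevant equivalence class, namely the one whose reading word is Knuth-equivalent to the word obtained by reading the rows of that tableau; (4) identify the Yamanouchi condition on a skew tableau $T$ of content $\mu$ as precisely the condition that the rectification of $T$ is the unique \emph{Yamanouchi} (i.e., row-superstandard) tableau $U_\mu$ of shape $\mu$, since a semistandard tableau is Yamanouchi if and only if it equals $U_\mu$; and (5) assemble these facts: expanding $s_\lambda \cdot s_\mu$ as a sum of monomials, grouping monomials of a common content into Schur polynomials $s_\nu$ via the bijection tableau $\mapsto$ (shape, rectification), and counting the fibers, yields exactly the coefficient $c_{\lambda,\mu}^\nu$ as the number of Yamanouchi skew semistandard tableaux of shape $\nu/\lambda$ and content $\mu$.

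The main obstacle is Step (1) together with Step (2): proving that jeu de taquin rectification is well-defined and respects Knuth equivalence. This is the technical heart of the plactic monoid theory of Lascoux-Schützenberger, and it requires a careful case analysis of how an inward slide interacts with the entries of the tableau, or alternatively an argument via the "switching" symmetry of jeu de taquin. Everything downstream — the uniqueness of tableaux in Knuth classes, the translation of the Yamanouchi condition, and the final bookkeeping — is then comparatively formal. An alternative to the jeu de taquin approach is to use the Bender-Knuth-style involutions or the crystal-theoretic structure on tableaux to directly produce a weight-preserving, content-symmetric bijection exhibiting the Schur-positivity; in the crystal approach the coefficient $c_{\lambda,\mu}^\nu$ counts highest-weight elements of weight $\nu$ in the tensor product of crystals $B(\lambda) \otimes B(\mu)$, and the Yamanouchi tableaux are exactly those highest-weight vertices. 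I would present the jeu de taquin proof as the primary argument, since it is self-contained given only Theorem~\ref{thm:littlewood_formula}, and remark on the crystal-theoretic reinterpretation.
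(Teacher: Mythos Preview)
The paper is a survey and does not actually give a proof of Theorem~\ref{thm:LRrule}: it states the Littlewood--Richardson rule as a classical result, discusses the geometric and representation-theoretic reasons behind the positivity of Theorem~\ref{thm:Clmn_positive}, and then simply records the explicit combinatorial rule with an illustrative example. So there is no ``paper's own proof'' to compare your proposal against.

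That said, your outline is one of the standard correct routes to the Littlewood--Richardson rule, essentially the Sch\"utzenberger jeu de taquin proof as presented in, e.g., Fulton's \emph{Young Tableaux}. The logical flow is right: reduce to counting skew tableaux of shape $\nu/\lambda$ whose rectification is a fixed tableau of shape $\mu$; prove confluence of jeu de taquin (equivalently, that rectification is well-defined); identify rectification classes with Knuth classes; and observe that the Yamanouchi condition on a skew tableau of content $\mu$ is equivalent to rectifying to the superstandard tableau $U_\mu$. Your assessment that Step~(1)/(2) is the technical heart is accurate. One small imprecision: the cleanest way to set up the reduction is not to ``fix a large rectangle'' but to use the adjointness $\langle s_\lambda s_\mu, s_\nu\rangle = \langle s_\mu, s_{\nu/\lambda}\rangle$ together with the tableau formula for the skew Schur function $s_{\nu/\lambda}$; this makes the passage from ``pairs of tableaux'' to ``skew tableaux'' immediate and avoids the somewhat vague phrasing in your first paragraph. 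The crystal-theoretic remark at the end is also correct and would make a good alternative or complement.
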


\begin{example}
To compute the structure coefficient $c_{(2,1),(2,1)}^{(3,2,1)}$ via Theorem~\ref{thm:LRrule}, we consider fillings of the skew shape $(3,2,1) / (2,1)$:
\[
\ydiagram{2+1,1+1,1}
\]
using two $1$s and one $2$. There are three such fillings
\[
\ytableaushort{\none \none 1, \none 1, 2} \quad \ytableaushort{\none \none 1, \none 2, 1} \quad \ytableaushort{\none \none 2, \none 1, 1}
\]
all of which are semistandard. However, only the first two are Yamanouchi, as the reading word of the third is $211$, which has a $2$ before any $1$. Hence, $c_{(2,1),(2,1)}^{(3,2,1)} = 2$.
\end{example}

\section{The quasisymmetric world}\label{sec:qsym}
In this section, we consider a third action of $\mathcal{S}_n$ on $\poly_n$. Here, the simple transposition $(i \; i+1)$ acts on $f \in \poly_n$ by swapping the variables $x_i$ and $x_{i+1}$ \emph{in only those terms not involving both variables}. The invariants of this action are the subalgebra $\qsym_n$ of {\bf quasisymmetric polynomials}.  Analogously to the symmetric case, one can also define the ring $\qsym$ of {\bf quasisymmetric functions} in infinitely-many variables as the inverse limit of the $\qsym_n$, but again our focus is on the essentially equivalent finite-variable case. For a much more detailed survey than we provide here of the state of the art in quasisymmetric function theory, see \cite{Mason}.

We will consider three important bases of $\qsym_n$: the monomial, fundamental and quasiSchur bases.

\begin{figure}[h]
\begin{tikzpicture}
\node[plaque] (quasischur) {\qcolor{$\qschur_\alpha$}};
\node[above=1 of quasischur,posplaque] (schur) {\scolor{$\schur_\lambda$}};
\node[right=1 of quasischur,posplaque] (fund) {\qcolor{$F_\alpha$}};
\node[right=1 of fund,posplaque] (mono) {\qcolor{$M_\alpha$}};
\node[above=1 of mono,posplaque] (monosym) {\scolor{$m_\lambda$}};
  \begin{scope}[nodes = {draw = none}]
    \path (quasischur) edge[posex]   (fund)
    (fund) edge[posex]  (mono)
     (schur) edge[posex] (quasischur)
     (schur) edge[posex] (monosym)
     (monosym) edge[posex] (mono)
      ;
  \end{scope}
\end{tikzpicture}
\caption{The three bases of $\qsym_n$ considered here, together with some bases of $\sym_n$ from Figure~\ref{fig:sym}. The arrows denote that the basis at the head refines the basis at the tail. The star-shaped nodes have positive structure coefficients.}\label{fig:qsym}
\end{figure}
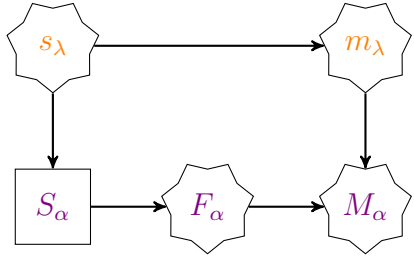

A {\bf strong composition} $\alpha$ is a finite sequence of positive integers; we identify $\alpha$ with the weak composition obtained by appending infinitely many $0$s to the end of $\alpha$. For any weak composition $a$, its {\bf positive part} is the strong composition $a^+$ given by deleting all $0$s.

For any strong composition $\alpha$, define the {\bf monomial quasisymmetric polynomial} $M_\alpha$ by
\[M_\alpha(x_1,\ldots , x_n) \coloneqq \sum_{b}{\mathbf x}^b \in \qsym_n, \]
where the sum is over all weak compositions $b$ with $b^+ = \alpha$ and whose entries after position $n$ are all zero. Clearly, the monomial quasisymmetric polynomials yield a basis of $\qsym_n$.

\begin{example}
We have \[M_{13}(x_1,x_2,x_3) = {\mathbf x}^{130} + {\mathbf x}^{103} + {\mathbf x}^{013} \in \qsym_3.\]
Note in particular that this polynomial is not an element of $\sym_3$.
\end{example}

It is clear that the monomial basis of $\qsym_n$ must have positive structure coefficients, like all the bases of $\sym_n$ discussed in Section~\ref{sec:sym}. However, these structure coefficients are slightly more interesting that those for the monomial symmetric functions. They are given by the overlapping shuffles of M.~Hazewinkel \cite{Hazewinkel}, which we now recall.

Let $A$ and $B$ be words in disjoint alphabets with $A$ of length $m$ and $B$ of length $n$.  An {\bf overlapping shuffle} of $A$ and $B$ is a surjection
\[
t : \{1, 2,  \dots, m+n \} \to \{1, 2, \dots, k\}
\]
(for some $\max \{m,n\} \leq k \leq m+n$) such that \[t(i) < t(j) \text{ whenever } i < j \leq m \text{ or } m < i < j.\]
We write $A \shuffle_o B$ for the {\bf overlapping shuffle product} of $A$ and $B$, the formal sum of all overlapping shuffles. The overlapping shuffle product $\alpha \shuffle_o \beta$ of two strong compositions $\alpha$ and $\beta$ is given by treating the strong compositions as words in disjoint alphabets. Here, we identify an overlapping shuffle $t : \{1, 2,  \dots, m+n \} \to \{1, 2, \dots, k\}$ of $\alpha$ and $\beta$ with the strong composition $\gamma$ defined by 
\[
\gamma_i \coloneqq \sum_{t(j) = i} (\alpha\beta)_j,
\]
where $\alpha\beta$ denotes the concatenation of the two strong compositions.

\begin{example}\label{ex:overlapping}
We compute the overlapping shuffle product of $(2)$ and $(1,2)$:
\[
(2) \shuffle_o (1,2) = (2,1,2) + 2 \cdot (1,2,2) + (3,2) + (1,4).
\]
\end{example}

Although the relevant combinatorial construction was somewhat involved, the following positive multiplication formula is now essentially clear.

\begin{theorem}\label{thm:mult_M}
For strong compositions $\alpha$ and $\beta$, we have 
\[
M_\alpha \cdot M_\beta = \sum_\gamma c_{\alpha, \beta}^\gamma M_\gamma,
\]
where $c_{\alpha, \beta}^\gamma$ is the multiplicity of $\gamma$ in the overlapping shuffle product $\alpha \shuffle_o \beta$.
\end{theorem}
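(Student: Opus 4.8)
The plan is to compute the product $M_\alpha \cdot M_\beta$ directly at the level of monomials and then match the resulting terms with the definition of the overlapping shuffle product. First I would write $M_\alpha = \sum_{b : b^+ = \alpha} \xx^b$ and $M_\beta = \sum_{c : c^+ = \beta} \xx^c$, where $b$ and $c$ range over weak compositions supported on positions $\{1, \dots, n\}$. Multiplying, $M_\alpha \cdot M_\beta = \sum_{b, c} \xx^{b+c}$, so I need to understand, for a fixed weak composition $d$ supported on $\{1,\dots,n\}$, how many pairs $(b,c)$ satisfy $b^+ = \alpha$, $c^+ = \beta$, and $b + c = d$. Writing $m = \ell(\alpha)$, $n' = \ell(\beta)$ (the number of parts), specifying such a pair amounts to choosing which of the positions in the support of $d$ receive a part of $\alpha$ only, which receive a part of $\beta$ only, and which receive a sum of one part of each — and, crucially, the order of parts within $\alpha$ and within $\beta$ must be preserved as we scan positions left to right. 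That is exactly the data of a surjection $t : \{1, \dots, m+n'\} \to \{1, \dots, k\}$ that is increasing on $\{1, \dots, m\}$ and on $\{m+1, \dots, m+n'\}$, i.e., an overlapping shuffle of $\alpha$ and $\beta$; the associated $\gamma$ in the definition is precisely $d^+$.

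The key steps, in order, are: (1) expand both factors in monomials and collect; (2) for a fixed monomial $\xx^d$, set up the bijection between pairs $(b,c)$ with $b^+=\alpha$, $c^+=\beta$, $b+c=d$ and overlapping shuffles $t$ of $\alpha,\beta$ whose induced composition $\gamma$ equals $d^+$; (3) observe that $\xx^d$ and $\xx^{d'}$ occur with the same coefficient in $M_\alpha\cdot M_\beta$ whenever $d^+ = d'^+ = \gamma$ (this is automatic since the construction in step (2) depends only on $d^+$, which also re-proves that the product lies in $\qsym_n$), so the coefficient of $M_\gamma$ is well-defined and equals the number of overlapping shuffles producing $\gamma$; (4) conclude by summing over $\gamma$. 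One should also note the harmless boundary point that shuffles producing a $\gamma$ with more than $n$ parts contribute nothing over $n$ variables, matching the fact that $M_\gamma$ with $\ell(\gamma) > n$ is zero in $\qsym_n$; this is why the statement is cleanest (and the multiplicities stabilize) for $n$ large, though it holds verbatim as stated.

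The main obstacle is step (2): making the bijection between "pairs of weak compositions summing to $d$ with prescribed positive parts" and "overlapping shuffles" genuinely precise, rather than merely plausible. The subtlety is bookkeeping the three types of position (a part of $\alpha$ alone, a part of $\beta$ alone, an overlap) together with the order-preservation constraint, and checking that the map $\gamma_i = \sum_{t(j)=i}(\alpha\beta)_j$ from the definition of $\shuffle_o$ reproduces exactly $d^+$. I would handle this by induction on $\ell(\alpha) + \ell(\beta)$, or equivalently by reading $d$ from the left: the leftmost nonzero entry of $d$ is either $\alpha_1$, or $\beta_1$, or $\alpha_1 + \beta_1$, and in each case the remaining data is an overlapping shuffle of the appropriate suffixes, which matches the recursive structure of $t$. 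Everything else — the expansion, the collection of like terms, the final summation — is routine.
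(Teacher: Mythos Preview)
Your proposal is correct and is exactly the standard argument. The paper does not give an explicit proof of this theorem at all: after defining the overlapping shuffle product it simply remarks that ``the following positive multiplication formula is now essentially clear'' and states the theorem. Your steps (1)--(4) are precisely the unpacking of that remark, and your bijection in step (2) is the right one (note that strict monotonicity of $t$ on each block forces each fiber $t^{-1}(\ell)$ to have size at most two, with at most one element from each of $\alpha$ and $\beta$, which is what makes the correspondence with the three position-types work).
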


\begin{example}
To compute $M_{(2)} \cdot M_{(1,2)}$ via Theorem~\ref{thm:mult_M}, we compute the overlapping shuffle product of $(2)$ and $(1,2)$ as in Example~\ref{ex:overlapping}. Then, the coefficients on the various strong compositions give the coefficients on the various monomial quasisymmetric polynomials in the product:
\[
M_{(2)} \cdot M_{(1,2)} = M_{(2,1,2)} + 2 M_{(1,2,2)} + M_{(3,2)} + M_{(1,4)}.
\]
\end{example}

Given two strong compositions $\alpha$ and $\beta$, we say $\beta$ refines $\alpha$ and write $\beta \vDash \alpha$ if $\alpha$ can be obtained by summing \emph{consecutive} entries of $\beta$, e.g.\ $(1,2,1) \vDash (1,3)$ but $(2,1,1) \not \vDash (1,3)$.

Define the {\bf fundamental quasisymmetric polynomial} $F_\alpha$ by
\[F_\alpha(x_1,\ldots , x_n) \coloneqq \sum_{b}{\mathbf x}^b,\]
where the sum is over all distinct weak compositions $b$ with $b^+ \vDash \alpha$  and whose entries after position $n$ are all zero.

\begin{example}
We have \[F_{13}(x_1,x_2,x_3) = {\mathbf x}^{130} + {\mathbf x}^{103} + {\mathbf x}^{013} + {\mathbf x}^{112} + {\mathbf x}^{121}.\]
\end{example}

\begin{theorem}\label{thm:F2M}
The fundamental quasisymmetric polynomials expand positively in the monomial quasisymmetric polynomials:
\[F_\alpha(x_1,\ldots , x_n) = \sum_{\beta \vDash \alpha}M_\beta(x_1,\ldots , x_n).\]
\end{theorem}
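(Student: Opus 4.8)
The plan is to prove this by directly unpacking the definitions of both sides and matching monomials, exploiting the fact that every weak composition $b$ has a \emph{unique} positive part $b^+$. Consequently, the set of weak compositions indexing the monomials of $F_\alpha$ is naturally partitioned according to the value of $b^+$, and it is exactly this partition that produces the claimed sum of monomial quasisymmetric polynomials.

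First I would recall the definitions in parallel: $F_\alpha(x_1,\dots,x_n) = \sum_b \xx^b$, where $b$ ranges over distinct weak compositions supported in the first $n$ positions with $b^+ \vDash \alpha$; and for each strong composition $\beta$, $M_\beta(x_1,\dots,x_n) = \sum_b \xx^b$, where $b$ ranges over distinct such weak compositions with $b^+ = \beta$. The crucial elementary observation is that the condition ``$b^+ \vDash \alpha$'' holds if and only if there exists a strong composition $\beta$ with $\beta \vDash \alpha$ and $b^+ = \beta$, and moreover this $\beta$ is forced to equal $b^+$, hence unique. This gives the set identity
\[
\{\, b : b^+ \vDash \alpha,\ b \text{ supported in positions } \leq n \,\} \;=\; \bigsqcup_{\beta \vDash \alpha} \{\, b : b^+ = \beta,\ b \text{ supported in positions } \leq n \,\},
\]
a genuine disjoint union because $b \mapsto b^+$ is a well-defined function, so its fibers are pairwise disjoint, and every $b$ with $b^+ \vDash \alpha$ lands in the block indexed by $\beta = b^+$. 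Summing $\xx^b$ over both sides then yields $\sum_{\beta \vDash \alpha} M_\beta = F_\alpha$, with the truncation to the first $n$ variables imposed identically on each side so it plays no role.

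There is no substantive obstacle here; the argument is entirely bookkeeping, and the only point meriting a moment's care is the ``distinct weak compositions'' caveat in the definitions. Since $b \mapsto b^+$ is an honest function whose fibers are precisely the index sets of the various $M_\beta$, each monomial $\xx^b$ of $F_\alpha$ occurs with coefficient $1$ and is recovered in exactly one summand $M_{b^+}$, so no monomial is over- or under-counted. One could present the proof in a single line by invoking the displayed set identity directly.
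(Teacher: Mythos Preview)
Your argument is correct. The paper actually states this theorem without proof, treating it as immediate from the definitions of $F_\alpha$ and $M_\beta$; your write-up is exactly the direct verification one would supply, partitioning the index set $\{b : b^+ \vDash \alpha\}$ by the fibers of $b \mapsto b^+$.
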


A sign of the fundamental nature of the fundamental quasisymmetric polynomials is that they also have positive structure coefficients. Their multiplication is also governed by a shuffle product, that of S.~Eilenberg and S.~Mac Lane \cite{Eilenberg.MacLane}. Let $A$ and $B$ be words in the disjoint alphabets $\mathcal{A}$ and $\mathcal{B}$, respectively. A {\bf shuffle} of $A$ and $B$ is a permutation of the concatenation $AB$ such that the subword on the alphabet $\mathcal{A}$ is $A$ and the subword on $\mathcal{B}$ is $B$. Alternatively, if $A$ has length $m$ and $B$ has length $n$, we can think of a shuffle of $A$ and $B$ as a bijection 
\[
s : \{1, 2,  \dots, m+n \} \to \{1, 2, \dots, m+n\}
\] such that 
\[s(i) < s(j) \text{ whenever } i < j \leq m \text{ or } m < i < j.\] %We write $A \shuffle B$ for the {\bf shuffle product} of $A$ and $B$, the formal sum of all shuffles. 

The shuffle product of two strong compositions $\alpha$ and $\beta$ is obtained as follows. Let $\mathcal{A}$ denote the alphabet of odd integers and let $\mathcal{B}$ denote the alphabet of even integers. Let $A$ be the word in $\mathcal{A}$ consisting of $\alpha_1$ copies of $2\ell(\alpha)-1$, followed by $\alpha_2$ copies of $2\ell(\alpha)-3$, all the way to $\alpha_{\ell(\alpha)}$ copies of $1$. Likewise, let $B$ denote the word in $\mathcal{B}$ consisting of $\beta_1$ copies of $2\ell(\beta)$, followed by $\beta_2$ copies of $2\ell(\beta)-2$, all the way to $\beta_{\ell(\beta)}$ copies of $2$. Let $\Sh(A,B)$ denote the set of the $|\alpha|+ |\beta| \choose |\beta|$ shuffles of $A$ and $B$. For each $C\in \Sh(A,B)$, let $\Des(C)$ denote the {\bf descent composition} of $C$, i.e.\ the strong composition obtained by decomposing $C$ into maximal runs of increasing entries and letting $\Des(C)_i$ be the number of entries in the $i$th increasing run of $C$. 
Finally, define the {\bf shuffle product} $\alpha \shuffle \beta$ of the strong compositions $\alpha$ and $\beta$ as the formal sum of strong compositions
\[
\alpha \shuffle \beta \coloneqq \sum_{C\in \Sh(A,B)}\Des(C). 
\]

\begin{example}\label{ex:shuffle}
Let $\alpha=(2)$ and $\beta=(1,2)$. Then $A=11$ and $B=422$. We compute the set of shuffles of $A$ and $B$:
\begin{align*}
\Sh(A,B) = \{ &4|22|11, 4|2|12|1, 4|122|1, 14|22|1, 4|2|112, \\
& 4|12|12, 14|2|12, 4|1122, 14|122, 114|22\},
\end{align*}
 where we have placed bars to indicate the decomposition of each shuffle into maximally increasing runs. The corresponding descent compositions are thus, respectively,
 \[
 \{(1,2,2), (1,1,2,1), (1,3,1), (2,2,1), (1,1,3), (1,2,2), (2,1,2), (1,4), (2,3), (3,2)\}.
 \]
 Hence, we have
\begin{align*}
(2)\shuffle (1,2) &= 2(1,2,2) + (1,1,2,1) + (1,3,1) + (2,2,1) \\
 &+ (1,1,3) + (2,1,2) + (1,4) + (2,3) + (3,2).
\end{align*}
Note that this sum is not multiplicity-free.
\end{example}

\begin{theorem}\label{thm:mult_F}
For strong compositions $\alpha$ and $\beta$, we have 
\[
F_\alpha \cdot F_\beta = \sum_\gamma c_{\alpha, \beta}^\gamma F_\gamma,
\]
where $c_{\alpha, \beta}^\gamma$ is the multiplicity of $\gamma$ in the ordinary shuffle product $\alpha \shuffle \beta$.
\end{theorem}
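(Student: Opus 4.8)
The plan is to prove the identity by expanding both sides as sums of monomials and producing a bijection between the two indexing sets that preserves the associated monomial. The first step is to record the standard ``word form'' of $F_\alpha$. Put $m\coloneqq|\alpha|$ and let $S(\alpha)\coloneqq\{\alpha_1,\,\alpha_1+\alpha_2,\,\dots,\,\alpha_1+\cdots+\alpha_{\ell(\alpha)-1}\}$ be the associated descent set. Unwinding the definition, the requirement $b^+\vDash\alpha$ on an exponent vector $b$ says exactly that the weakly increasing word $x_{i_1}\cdots x_{i_m}$ recording the content of $\mathbf{x}^b$ satisfies $i_p<i_{p+1}$ for every $p\in S(\alpha)$; so
\[
F_\alpha=\sum_{\mathbf i}x_{i_1}\cdots x_{i_m},
\]
the sum running over weakly increasing sequences $\mathbf i=(i_1\le\cdots\le i_m)$ of positive integers bounded by $n$ with a strict increase forced at each position of $S(\alpha)$; call such an $\mathbf i$ a word \emph{of type $\alpha$}. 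Hence $F_\alpha\cdot F_\beta$ is the sum of $x_{i_1}\cdots x_{i_m}x_{j_1}\cdots x_{j_n}$ over pairs $(\mathbf i,\mathbf j)$ with $\mathbf i$ of type $\alpha$ and $\mathbf j$ of type $\beta$; and, since $S(\Des(w))$ is precisely the descent set of $w$, the same word form shows $\sum_{w\in\Sh(A,B)}F_{\Des(w)}$ is the sum of $x_{k_1}\cdots x_{k_{m+n}}$ over pairs $(w,\mathbf k)$ with $w\in\Sh(A,B)$ and $\mathbf k=(k_1\le\cdots\le k_{m+n})$ weakly increasing satisfying $k_p<k_{p+1}$ whenever $w_p>w_{p+1}$.

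The second step is to construct the bijection $(\mathbf i,\mathbf j)\leftrightarrow(w,\mathbf k)$, arranged so that $x_{i_1}\cdots x_{i_m}x_{j_1}\cdots x_{j_n}$ and $x_{k_1}\cdots x_{k_{m+n}}$ are literally equal. Given $(w,\mathbf k)$, let $\mathbf i$ be the subword of $\mathbf k$ on the positions where $w$ carries a letter of $A$ and $\mathbf j$ the subword on the positions where $w$ carries a letter of $B$; both are weakly increasing because $\mathbf k$ is. For the inverse, given $(\mathbf i,\mathbf j)$, form $m+n$ tokens --- the $p$-th letter of $A$ tagged by the value $i_p$, and the $q$-th letter of $B$ tagged by the value $j_q$ --- and sort the tokens by tag value in weakly increasing order, breaking ties so that a token with the smaller letter precedes one with a larger letter, and so that tokens carrying equal letters keep their original $A$- (resp.\ $B$-) order; then read off $w$ as the resulting word of letters and $\mathbf k$ as the resulting sequence of tags.

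The third step is the verification, and this is where the precise construction of $A$ and $B$ --- runs of a single odd (resp.\ even) letter, with the strict drops between runs located exactly at the positions of $S(\alpha)$ (resp.\ $S(\beta)$) --- does the essential work. One must check: (i) from a valid $(w,\mathbf k)$, the extracted $\mathbf i$ is of type $\alpha$, since no descent of $w$ sits inside a run of $A$ (so $\mathbf i$ is weakly increasing) and at a boundary $p\in S(\alpha)$ the two relevant letters of $A$ form a descent of $w$, forcing a strict increase of $\mathbf k$ somewhere between their positions, hence $i_p<i_{p+1}$; and likewise $\mathbf j$ is of type $\beta$; (ii) the tie-breaking rule is well defined, since among tokens sharing a tag value all those from $A$ lie in one run of $A$ --- otherwise $\mathbf i$ would violate the type-$\alpha$ condition of (i) --- hence share one (odd) letter, and similarly for $B$, so ``smaller letter first'' is unambiguous, and the resulting $w$ really is a shuffle of $A$ and $B$ because the tags attached to run $b$ of $A$ all strictly precede those of run $b+1$. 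Granting (i)--(ii), it is routine that the two maps are mutually inverse: a weakly increasing shuffle of two constant runs with distinct letters must list the smaller-letter run first, matching the tie-break, and the tags on each side recover $\mathbf i$ and $\mathbf j$.

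I expect the only real obstacle to be exactly this bookkeeping: one has to track carefully how the parity/run architecture of $A$ and $B$ converts ``strict ascent forced on $S(\alpha)$ and $S(\beta)$'' into ``descent set of a shuffle'' and back, and confirm that the tie-break is a coherent global choice; the word form of $F_\alpha$, the monomial expansion of a product, and the inverse check are all formal. One could instead deduce the theorem from Stanley's theory of $(P,\omega)$-partitions, presenting $F_\alpha$ and $F_\beta$ as generating functions of labeled zigzag posets, their product as that of the disjoint union, and the shuffles as the linear extensions appearing in the fundamental theorem of $(P,\omega)$-partitions; but that route would require developing $P$-partition machinery not otherwise used here.
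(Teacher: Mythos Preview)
The paper is a survey and does not supply a proof of this theorem; it states the result, illustrates it with an example, and moves on. So there is no ``paper's own proof'' to compare against.

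Your proposal is correct and is the standard direct bijective argument. A couple of small phrasing issues: in step~(i) you write ``no descent of $w$ sits inside a run of $A$ (so $\mathbf i$ is weakly increasing)'', but the weak increase of $\mathbf i$ is immediate from its being a subsequence of the weakly increasing $\mathbf k$ and needs no such observation; and ``the two relevant letters of $A$ form a descent of $w$'' is slightly loose, since $A_p$ and $A_{p+1}$ need not be adjacent in $w$---the correct statement (which you clearly intend) is that $A_p>A_{p+1}$ forces a descent of $w$ somewhere between their positions, hence a strict increase of $\mathbf k$ there, hence $i_p=k_q<k_{q'}=i_{p+1}$. With those tweaks the verification in (i)--(ii) and the inverse check go through exactly as you outline. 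The alternative $(P,\omega)$-partition route you mention at the end is indeed the other standard proof and would be equally acceptable; your choice of the explicit bijection is well matched to the concrete encoding of $A$ and $B$ that the paper sets up.
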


\begin{example}
To compute $F_{(2)} \cdot F_{(1,2)}$ via Theorem~\ref{thm:mult_F}, we compute the shuffle product of $(2)$ and $(1,2)$ as in Example~\ref{ex:shuffle}. Then, the coefficients on the various strong compositions give the coefficients on the various fundamental quasisymmetric polynomials in the product:
\[
F_{(2)} \cdot F_{(1,2)} = 2F_{(1,2,2)} + F_{(1,1,2,1)} + F_{(1,3,1)} + F_{(2,2,1)} + F_{(1,1,3)} + F_{(2,1,2)} + F_{(1,4)} + F_{(2,3)} + F_{(3,2)}.
\]
\end{example}

The final basis for $\qsym_n$ that we will consider is the basis of quasiSchur polynomials introduced in \cite{Haglund.Luoto.Mason.vanWilligenburg:quasiSchur}. For a detailed and readable survey of work related to this basis, see \cite{Luoto.Mykytiuk.vanWilligenburg}. For those unfamiliar with quasiSchur polynomials, the definition may appear strange and complicated; it originates as a particularly important and tractable piece of the theory of \emph{Macdonald polynomials}. It is not transparent from this definition that the quasiSchur polynomials are quasisymmetric, much less that they yield a basis of $\qsym_n$.

First, we must extend the definition of the Young diagram of a partition to a general weak composition $a = (a_1, a_2, \dots)$: Draw $a_i$ left-justified boxes in row $i$. (Here, in accordance with our English orientation on Young diagrams for partitions, row $1$ is the top row.) A {\bf (composition) tableau} of shape $a$ is an assignment of a positive integer to each box of the Young diagram for $a$. (Sometimes, we will augment such a tableau with an extra column $0$ of boxes on the left side (the {\bf basement}) and write $b_i$ for the positive integer labeling the basement box in row $i$.)

A {\bf triple} of boxes in a composition tableau $T$ is a set of three boxes in one of the two following configurations:
\begin{align*}
&\ytableaushort{Z X, \none {\none[\vdots]}, \none Y}  &\ytableaushort{Y, {\none[\vdots]}, Z X} \\
&\text{upper row weakly longer}  &\text{upper row strictly shorter}
\end{align*}
Note, in particular, that a triple has exactly two boxes sharing a row and exactly two boxes sharing a column.
We say a triple is {\bf inversion} if it is \emph{not} the case that its labels satisfy $X \leq Y \leq Z$.

A composition tableau is {\bf semistandard} if 
\begin{enumerate}
\item[(S.1)] entries do not repeat in a column,
\item[(S.2)] rows weakly decrease from left to right,
\item[(S.3)] every triple is inversion,
\item[(S.4)] entries in the first column equal their row indices. 
\end{enumerate} 
(Note that, in the case of partition shape, this definition unfortunately does not coincide with the definition of semistandard tableaux we have given previously.)
For a weak composition $a$, let $\ASSF(a)$ denote the set of semistandard tableaux of shape $a$.
The {\bf quasiSchur polynomial} for the strong composition $\alpha$ is then given by
\begin{equation}\label{eq:quasiSchur}
\qschur_\alpha(x_1, \ldots , x_n) = \sum_{a^+ = \alpha} \sum_{T \in \ASSF(a)} \xx^{\wt(T)},
\end{equation}
where the first sum is over all weak compositions $a$ of length $n$ with positive part $\alpha$.

\begin{example}
For $\alpha=(1,3)$ and $n=3$, we have
\[\qschur_{(1,3)}(x_1,x_2,x_3) = \xx^{130}+\xx^{220} + \xx^{103} + \xx^{202} + 2\xx^{112} + \xx^{121} + \xx^{211} + \xx^{013} + \xx^{022},\]
where the monomials are determined by the semistandard composition tableaux shown in Figure~\ref{fig:ASSF}.
\end{example}

\begin{figure}[h]
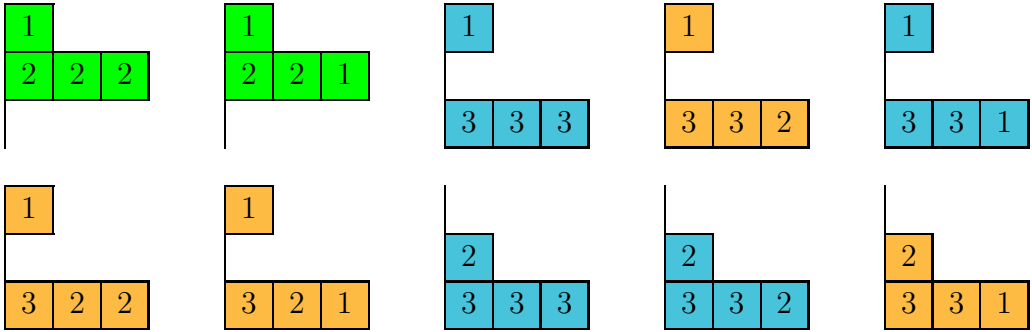

\begin{displaymath}
\begin{array}{cccccc}
\begin{ytableau}[*(green)] \none & 1 \\ \none  & 2 & 2 & 2  \\ \none \vline   \end{ytableau} 
& \begin{ytableau}[*(green)]  \none & 1 \\ \none  & 2 & 2 & 1  \\ \none \vline   \end{ytableau} 
& \begin{ytableau}[*(SkyBlue)] \none & 1 \\ \none \vline \\ \none  & 3 & 3 & 3     \end{ytableau}
& \begin{ytableau}[*(Dandelion)] \none & 1 \\ \none \vline \\ \none  & 3 & 3 & 2     \end{ytableau}
& \begin{ytableau}[*(SkyBlue)] \none & 1 \\ \none \vline \\ \none  & 3 & 3 & 1     \end{ytableau} \\ \\
 \begin{ytableau}[*(Dandelion)]  \none & 1 \\ \none \vline \\ \none  & 3 & 2 & 2     \end{ytableau}
& \begin{ytableau}[*(Dandelion)]  \none & 1 \\ \none \vline \\ \none  & 3 & 2 & 1     \end{ytableau}
& \begin{ytableau}[*(SkyBlue)]  \none \vline \\   \none & 2  \\ \none  & 3 & 3 & 3 \end{ytableau}
& \begin{ytableau}[*(SkyBlue)]  \none \vline \\   \none & 2  \\ \none  & 3 & 3 & 2 \end{ytableau}
& \begin{ytableau}[*(Dandelion)]   \none \vline \\   \none & 2  \\ \none  & 3 & 3 & 1 \end{ytableau}
\end{array}
\end{displaymath}
\caption{The $10$ semistandard composition tableaux associated to the quasiSchur polynomial $\qschur_{(1,3)}(x_1,x_2,x_3)$. The quasiYamanouchi tableaux are shaded in \textcolor{SkyBlue}{blue}, the initial tableaux in \textcolor{Dandelion}{orange}, and those that are both quasiYamanouchi and initial in \textcolor{green}{green}.}\label{fig:ASSF}
\end{figure}

From the given definition of quasiSchur polynomials, it is not clear that they are natural objects that we should expect to exhibit any nice properties. Nonetheless, they participate in two beautiful positive combinatorial rules for change of basis.

Since $\sym_n \subset \qsym_n$, we can ask how bases of $\sym_n$ expand in bases of $\qsym_n$. First, observe the following straightforward formula for the $M_\alpha$-expansion of the monomial symmetric polynomial $m_\lambda$. For a weak composition $a$, we write $\sort{a}$ for the partition formed by sorting the entries of $a$ into weakly decreasing order.

\begin{proposition}\label{prop:m2M}
The monomial symmetric polynomials expand positively in the monomial quasisymmetric polynomials:
\[ m_\lambda(x_1,\ldots , x_n) = \sum_{\sort{\alpha} = \lambda} M_\alpha(x_1,\ldots , x_n).\]
\end{proposition}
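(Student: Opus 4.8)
The plan is to prove the identity by showing that both sides are the same sum of monomials $\xx^b$, simply regrouped. Concretely, I would start from the definition of $m_\lambda(x_1, \ldots, x_n) = \sum_b \xx^b$, where the sum runs over all distinct weak compositions $b$ of length $n$ obtained by rearranging the parts of $\lambda$. The key observation is that a weak composition $b$ of length $n$ is a rearrangement of the parts of $\lambda$ (padded with zeros to length $n$) precisely when $\sort{b} = \lambda$, i.e.\ when sorting the entries of $b$ into weakly decreasing order recovers $\lambda$ (as a length-$n$ weak composition, so with enough trailing zeros).

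The second step is to partition the index set $\{b : \sort{b} = \lambda\}$ according to the positive part: for each such $b$, set $\alpha = b^+$. Then I claim that the weak compositions $b$ with $\sort{b} = \lambda$ are exactly the weak compositions $b$ of length $n$ with $\sort{(b^+)} = \lambda^+$ and $b^+ = \alpha$ for some strong composition $\alpha$ with $\sort{\alpha} = \lambda$; deleting zeros from $b$ commutes with sorting in the sense that $\sort{b} = \lambda$ iff $\sort{(b^+)} = \lambda$ as strong compositions (both are obtained by listing the positive parts in weakly decreasing order, since $\lambda$ has no interior zeros). So the set of valid $b$ is the disjoint union, over strong compositions $\alpha$ with $\sort{\alpha} = \lambda$, of the sets $\{b : b^+ = \alpha,\ b \text{ has length } n\}$.

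The third step is just to recognize each piece: by the definition of $M_\alpha(x_1, \ldots, x_n)$, we have $M_\alpha(x_1,\ldots,x_n) = \sum_b \xx^b$ over exactly those weak compositions $b$ with $b^+ = \alpha$ and all entries after position $n$ equal to zero. Summing over $\alpha$ with $\sort{\alpha} = \lambda$ and using the disjointness from the previous step yields $\sum_{\sort{\alpha} = \lambda} M_\alpha(x_1,\ldots,x_n) = \sum_{\sort{b} = \lambda} \xx^b = m_\lambda(x_1,\ldots,x_n)$, as desired. Positivity is immediate since all coefficients are $1$.

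This proposition is essentially a bookkeeping exercise, so I do not anticipate a genuine mathematical obstacle. The one point that requires a little care — the main "obstacle," such as it is — is the interplay between working with strong compositions of bounded length $n$ versus weak compositions: one must check that every strong composition $\alpha$ with $\sort{\alpha} = \lambda$ genuinely has length at most $n$ (true since $\alpha$ has the same number of parts as $\lambda$, and $\lambda$ has at most $n$ parts because we are working in $n$ variables) and that every such $\alpha$ does contribute, i.e.\ the set $\{b : b^+ = \alpha,\ b \text{ of length } n\}$ is nonempty. Both are routine, but it is worth stating them explicitly to make the disjoint-union decomposition airtight.
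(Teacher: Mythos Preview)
Your proposal is correct and is exactly the natural argument: regroup the monomials of $m_\lambda$ by their positive part and recognize each block as an $M_\alpha$. The paper in fact states this proposition without proof, treating it as an immediate observation from the definitions, so your unpacking is precisely what is implicitly intended.
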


The quasiSchur expansion of a Schur polynomial is beautifully parallel to the formula of Proposition~\ref{prop:m2M}.

\begin{theorem}[\cite{Haglund.Luoto.Mason.vanWilligenburg:quasiSchur}]\label{thm:s2S}
The Schur polynomials expand positively in the quasiSchur polynomials:
\[ s_\lambda(x_1,\ldots , x_n) = \sum_{\sort{\alpha} = \lambda} S_\alpha(x_1,\ldots , x_n).\]
\end{theorem}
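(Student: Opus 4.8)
The plan is to give a bijective proof, matching the semistandard Young tableaux counted by Littlewood's formula for $s_\lambda(x_1,\ldots,x_n)$ (Theorem~\ref{thm:littlewood_formula}) with the semistandard composition tableaux appearing on the right-hand side. Concretely, I would construct, for each weak composition $a$ of length $n$ with $\sort{a}=\lambda$, an injection $\Phi_a$ from $\ASSF(a)$ into $\SSYT(\lambda)$ that preserves the weight, and then argue that as $a$ ranges over all rearrangements of $\lambda$ the images $\Phi_a(\ASSF(a))$ partition $\SSYT(\lambda)$. Summing $\xx^{\wt}$ over both sides then yields exactly the claimed identity, since $\qschur_\alpha = \sum_{a^+=\alpha}\sum_{T\in\ASSF(a)}\xx^{\wt(T)}$ by \eqref{eq:quasiSchur}, and the partition $\lambda$ has a unique reordering with positive part $\alpha$ for each $\alpha$ with $\sort{\alpha}=\lambda$.

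The map $\Phi_a$ should be the natural ``column-sorting'' operation: given a composition tableau $T$ of shape $a$, reading its columns top-to-bottom and re-sorting each column into increasing order produces a filling of the Young diagram of $\lambda$. First I would check this lands in $\SSYT(\lambda)$: columns become strictly increasing by construction (condition (S.1) guarantees no repeats, so sorting gives strict increase), and the content of a row of the Young diagram of $\lambda$ of length $\lambda_i$ is controlled, via the inversion-triple condition (S.3) together with the row-weakly-decreasing condition (S.2), to force the weak-increase property across rows of the sorted diagram. The crucial point is to invert $\Phi_a$: given a semistandard Young tableau $U$ of shape $\lambda$, one reconstructs which rearrangement $a$ of $\lambda$ and which composition tableau $T$ it came from. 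This reconstruction is essentially a greedy/bumping procedure that distributes the cells of each column of $U$ back into rows according to a rule that makes (S.2)--(S.4) hold; it is the analogue of the standardization underlying the quasiSchur theory and is where the combinatorial content genuinely lives.

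The main obstacle, and the step I would spend the most care on, is establishing that $\Phi := \bigsqcup_a \Phi_a$ is a bijection onto $\SSYT(\lambda)$ — that is, that every semistandard Young tableau of shape $\lambda$ arises from exactly one composition tableau of exactly one rearrangement $a$ of $\lambda$. Injectivity of each individual $\Phi_a$ is comparatively easy; the delicate part is surjectivity together with the disjointness of images across different $a$, which amounts to showing that the reconstruction procedure is well-defined (always produces a legal semistandard composition tableau) and deterministic. One clean way to organize this is to pass through standard objects: prove the statement first for standard Young tableaux versus standard composition tableaux (a bijection of the flavor of \cite{Haglund.Luoto.Mason.vanWilligenburg:quasiSchur}), verify that it commutes with the destandardization maps on both sides, and then deduce the semistandard statement by the usual standardization argument. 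Alternatively, one can cite the quasiSchur function expansion $s_\lambda = \sum_{\sort{\alpha}=\lambda} S_\alpha$ in the symmetric function ring and note it descends to $n$ variables; but since the excerpt frames this as a combinatorial theorem, the bijective route above is the one I would write out in detail.
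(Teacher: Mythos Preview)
The paper does not supply its own proof of this theorem; it is a survey and simply cites the result to \cite{Haglund.Luoto.Mason.vanWilligenburg:quasiSchur}. So there is nothing in the paper to compare against directly.

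That said, your proposed route is essentially the standard one from the cited literature: the weight-preserving bijection between $\bigsqcup_{\sort{a}=\lambda}\ASSF(a)$ and $\SSYT(\lambda)$ is exactly Mason's bijection (see \cite{Mason:RSK}), and column-sorting is the correct description of the map $\Phi_a$ in the direction you give. You are also right that the substantive work is in the inverse, which is a greedy column-by-column insertion of the entries of a semistandard Young tableau into a composition shape. One small caution on your write-up: because composition tableaux have rows weakly \emph{decreasing} (condition (S.2)) while Young tableaux have rows weakly increasing, the verification that column-sorting lands in $\SSYT(\lambda)$ is not automatic from (S.1) and (S.2) alone; the inversion-triple condition (S.3) is doing real work here, and this forward-direction check deserves more than the parenthetical you give it. Your alternative suggestion of passing through standard objects and destandardizing is also a viable organization and is close in spirit to how the original references handle it.
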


\begin{remark}
Considering Figure~\ref{fig:qsym} together with Proposition~\ref{prop:m2M} and Theorem~\ref{thm:s2S}, one might be tempted to define polynomials
\[ f_\lambda(x_1,\ldots , x_n) = \sum_{\sort{\alpha} = \lambda} F_\alpha(x_1,\ldots , x_n).\]
Extrapolating from Figure~\ref{fig:qsym}, it might appear plausible that $\{ f_\lambda \}$ should form a basis of $\sym_n$, perhaps even with positive structure coefficients. However, the polynomials $f_\lambda$ are in general not even symmetric!

For example, in four or more variables, we have by Theorem~\ref{thm:F2M} and Proposition~\ref{prop:m2M} that
\begin{align*}
f_{31} &= F_{31} + F_{13} \\
&= M_{31} + M_{211} + 2M_{121} + M_{112} + M_{13} + 2M_{1111} \\
&= m_{31} + m_{211} + 2 m_{1111} + M_{121},
\end{align*}
a symmetric polynomial plus $M_{121}$.
\end{remark}

To describe the expansion of quasiSchur polynomials into the fundamental basis, we isolate an important subclass of semistandard composition tableaux. Fix a strong composition $\alpha$ and consider $T \in \ASSF(a)$ for some $a$ with $a^+ = \alpha$. We say that $T$ is {\bf quasiYamanouchi} if for every integer $i$ appearing in $T$, either
\begin{itemize}
\item an $i$ appears in the first column, or
\item there is an $i+1$ weakly right of an $i$.
\end{itemize}
We say $T$ is {\bf initial} if the set of integers $i$ appearing in $T$ is an initial segment of $\mathbb{Z}_{>0}$.

\begin{theorem}\label{thm:S2F}
The quasiSchur polynomials expand positively in the fundamental quasisymmetric polynomials:
\[ S_\alpha(x_1,\ldots , x_n) = \sum_{T} F_{\wt(T)}(x_1,\ldots , x_n),\]
where the sum is over all initial quasiYamanouchi tableaux $T$ such that $T \in \ASSF(a)$ for some $a$ with $a^+ = \alpha$.
\end{theorem}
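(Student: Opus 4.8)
The plan is to prove the identity by exhibiting a weight-preserving bijection. Directly from the definition of the fundamental quasisymmetric polynomials, $F_\gamma(x_1,\dots,x_n)=\sum_b \xx^b$ summed over weak compositions $b$ with $\ell(b)\le n$ and $b^+\vDash\gamma$, so the asserted formula is equivalent to a bijection between $\bigsqcup_{a^+=\alpha}\ASSF(a)$ and the set of pairs $(T',b)$, where $T'$ is an initial quasiYamanouchi tableau with $\mathrm{shape}(T')^+=\alpha$, $\ell(b)\le n$, and $b^+\vDash\wt(T')^+$, under which the tableau on the left-hand side has weight $b$. I would take the forward map to be $T\mapsto(\kappa(T),\wt(T))$, where $\kappa$ is a ``compression'' operation that collapses $T$ to an initial quasiYamanouchi tableau of the same positive part; the real content is then the construction of its inverse.

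To define $\kappa$, I would call an index $i$ \emph{deficient} for $T$ if some value $\ge i+1$ occurs in $T$ but it is not the case that $i$ occurs in column $1$ or that some $i+1$ lies weakly right of an $i$. A short argument shows that $T$ is initial quasiYamanouchi exactly when it has no deficient index, the key input being the structural fact --- immediate from (S.4) and the weak decrease of rows --- that the largest value occurring in a semistandard composition tableau always occurs in column $1$. When $i$ is deficient, row $i$ is empty, and I would let $L_i(T)$ be the tableau obtained by deleting row $i$ (shifting the lower rows up) and subtracting $1$ from each entry exceeding $i$. Verifying that $L_i(T)$ is again a semistandard composition tableau with positive part $\alpha$ should be routine: rows still weakly decrease because $v\mapsto v-[v>i]$ is weakly order-preserving, (S.4) is restored because the first-column entries $j>i$ become $j-1$, no column can repeat a value because deficiency forbids an $i$ and an $i+1$ in a common column, and for the triple axiom (S.3) a brief case check on the two triple shapes shows that the only triples whose inversion status the relabeling could destroy would themselves place an $i$ in column $1$ or an $i+1$ weakly right of an $i$, contradicting deficiency. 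Then $\kappa(T)$ is obtained by iterating $L_i$ at the smallest deficient $i$ until none remains; this terminates since $L_i$ strictly decreases the sum of all entries, the output is initial quasiYamanouchi with positive part $\alpha$, and since $L_i$ sends the weight $(b_1,b_2,\dots)$ to $(b_1,\dots,b_{i-1},b_i+b_{i+1},b_{i+2},\dots)$, we get $\wt(T)^+\vDash\wt(\kappa(T))^+$, so $(\kappa(T),\wt(T))$ is a valid pair.

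The main work will be to invert $T\mapsto(\kappa(T),\wt(T))$: given an initial quasiYamanouchi $T'$ with positive part $\alpha$ and an admissible weak composition $b$, to recover the unique $T\in\kappa^{-1}(T')$ with $\wt(T)=b$. The refinement $b^+\vDash\wt(T')^+$ together with the positions of the nonzero parts of $b$ prescribes, for each value $j$ occurring in $T'$, how the set of cells labeled $j$ should be split into consecutively labeled groups and how empty rows should be reinserted; one relabels $T'$ accordingly to produce $T$. The crux --- and what I expect to be the main obstacle --- is to show that this ``de-merging'' is \emph{forced}: that weak decrease of rows, column-distinctness, and the triple axiom together admit exactly one semistandard way to split the cells of each repeated value and to distribute the affected rows, and that the tableau so produced does compress back to $T'$. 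This rigidity is the same one underlying the standardization of composition tableaux in \cite{Haglund.Luoto.Mason.vanWilligenburg:quasiSchur}, and I would pattern the argument on that development; granting it, $\kappa$ and the de-merging map are mutually inverse and weight-preserving, which proves the theorem.
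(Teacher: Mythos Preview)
The paper does not actually prove Theorem~\ref{thm:S2F}; it merely records that a fundamental-expansion formula was first given in \cite{Haglund.Luoto.Mason.vanWilligenburg:quasiSchur} via standard augmented fillings, and that the specific form stated here follows from a result in \cite{Searles}. So there is no in-paper argument to compare against, and your proposal supplies a direct bijective proof where the paper offers only citations.

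Your approach is sound and is, in essence, the destandardization argument that underlies both cited references. The compression map $\kappa$ you describe is exactly the process of passing from a semistandard composition tableau to its standardization (recorded as an initial quasiYamanouchi tableau) together with its weight, and your $L_i$ is one step of that process. The verifications you sketch for $L_i$ are correct: deficiency of $i$ indeed forces row $i$ to be empty (by (S.4)), and the case analysis for (S.1) and (S.3) goes through precisely because any triple whose inversion status would be destroyed by merging $i$ and $i{+}1$ already witnesses an $i{+}1$ weakly right of an $i$, contradicting deficiency. So the forward map is fine.

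Where your proposal is honest about its own gap is the right place: the inverse. The uniqueness of the ``de-merging'' of a block of equal entries into consecutive values is not automatic from (S.1)--(S.3) alone; one needs the structural fact (proved in \cite{Haglund.Luoto.Mason.vanWilligenburg:quasiSchur}) that the cells carrying a fixed value in a semistandard composition tableau are totally ordered by a reading order compatible with the axioms, so that there is exactly one way to split them and re-insert empty rows consistent with a prescribed refinement $b^+\vDash\wt(T')^+$. You correctly identify this as the crux and point to the standardization machinery; once that lemma is imported, your bijection closes and the theorem follows. In short: your route is a self-contained reconstruction of the argument behind the paper's citations, with the one nontrivial ingredient (rigidity of destandardization) deferred to the same source the paper relies on.
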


A positive formula for the expansion of quasiSchur polynomials in fundamental quasisymmetric polynomials was first given in \cite{Haglund.Luoto.Mason.vanWilligenburg:quasiSchur} in terms of \emph{standard augmented fillings}. The formula in Theorem~\ref{thm:S2F} above follows as a consequence of a result in \cite{Searles}; we state the expansion in these terms for consistency with formulas in the upcoming sections.

\begin{remark}
Unlike the other two bases of $\qsym_n$ that we have considered, the quasiSchur basis does not have positive structure coefficients. For an example, see \cite[$\mathsection 7.1$]{Haglund.Luoto.Mason.vanWilligenburg:quasiSchur}. However, \cite{Haglund.Luoto.Mason.vanWilligenburg:refinement} proves a slightly weaker form of positivity, giving a positive combinatorial formula for the quasiSchur expansion of the product of a quasiSchur polynomial by a \emph{Schur} polynomial.
\end{remark}

Just as the combinatorics of $\sym_n$ is related to the representation theory of symmetric groups, the combinatorics of $\qsym_n$ turns out to be related to the representation theory of \emph{$0$-Hecke algebras} (in type $A$). First, let us recall the standard Coxeter presentation of the symmetric group $\mathcal{S}_n$. It is easy to see that $\mathcal{S}_n$ is generated by the simple transpositions $s_i \coloneqq (i \; i+1)$ for $1 \leq i < n$. With more effort, one establishes that a generating set of relations is given by
\begin{itemize}
\item $s_i^2 = \id$,
\item $s_i s_j = s_j s_i$ for $|i-j| > 1$, and
\item  $s_i s_{i+1} s_i = s_{i+1} s_i s_{i+1}$.
\end{itemize}

The $0$-Hecke algebra $\mathcal{H}_n$ is the unital associative algebra over $\mathbb{C}$ defined by a very similar presentation: $\mathcal{H}_n$ is generated by symbols $\sigma_i$ (for $1 \leq i < n$) subject to
\begin{itemize}
\item $\sigma_i^2 = \sigma_i$,
\item $\sigma_i \sigma_j = \sigma_j \sigma_i$ for $|i-j| > 1$, and
\item  $\sigma_i \sigma_{i+1} \sigma_i = \sigma_{i+1} \sigma_i \sigma_{i+1}$.
\end{itemize}
That is, the $\{ \sigma_i \}$ in $\mathcal{H}_n$ act exactly like the corresponding $\{ s_i \}$ in $\mathcal{S}_n$, except that they are idempotent instead of being involutions.

The representation theory of $\mathcal{H}_n$ was first worked out in detail by P.~Norton \cite{Norton}. Despite the similarly between the descriptions of $\mathcal{S}_n$ and $\mathcal{H}_n$, their representation theory is rather different, as $\mathcal{H}_n$ is not semisimple. Indeed, the irreducible representations of $\mathcal{H}_n$ are all $1$-dimensional, while $\mathcal{S}_n$ has irreducible representations of higher dimension. The irreducible representations of $\mathcal{H}_n$ are equinumerous with the set of compositions $\alpha \vDash (n)$. 

There is a \emph{quasisymmetric Frobenius character map} \cite{Duchamp.Krob.Leclerc.Thibon, Krob.Thibon} taking $0$-Hecke-representations to quasisymmetric functions in such a way that the irreducible representations map to the fundamental quasisymmetric functions $F_\alpha$. In this way, if the quasisymmetric function $f$ corresponds to the representation $M$, then decomposing $f$ as a sum of fundamental quasisymmetric functions corresponds to identifying the unique direct sum of irreducible $0$-Hecke representations that is equivalent to $M$ in the Grothendieck group of finite-dimensional representations. Certain explicit and combinatorial $\mathcal{H}_n$-representations are known whose quasisymmetric Frobenius characters are precisely the quasiSchur functions \cite{Tewari.vanWilligenburg}; unfortunately, these representations are not generally indecomposable.

The geometry of $\qsym_n$ is much less well understood. In addition to its obvious product structure, $\qsym$ also possesses a compatible coproduct, turning it into a \emph{Hopf algebra}. Although we won't describe it here, there is an important Hopf algebra ${\rm NSym}$ of \emph{noncommutative symmetric functions} that is Hopf-dual to $\qsym$. It is surprisingly easy to see that ${\rm NSym}$ is isomorphic to the homology of the loop space of the suspension of $\mathbb{CP}^\infty$, where the product structure on $H_\star(\Omega \Sigma \mathbb{CP}^\infty)$ is given by concatenation of loops \cite{Baker.Richter}. Indeed, this isomorphism even holds on the level of Hopf algebras. Since $\Omega \Sigma\mathbb{CP}^\infty$ is an H-space, its homology and cohomology are dual Hopf algebras. (See, for example, \cite{Hatcher,Whitehead} for background on H-spaces and Hopf algebras.) From this fact and the fact that $\qsym$ is Hopf-dual to ${\rm NSym}$, it follows that $H^\star(\Omega \Sigma \mathbb{CP}^\infty)$ is isomorphic to $\qsym$. This interpretation was used in \cite{Baker.Richter} to give cohomological proofs of various properties of $\qsym$; however, it seems that much more could be done from this perspective.
A recent construction, which appears closely related, identifes $\qsym$ with the Chow ring of an algebraic stack of \emph{expanded pairs} \cite{Oesinghaus}.

\section{The asymmetric world}\label{sec:poly}
In this section, we consider our fourth and final action of $\mathcal{S}_n$ on $\poly_n$, the trivial action. Although the action involved is the silliest possible one, the associated combinatorics is not at all silly, full of rich internal structure and deep connections to geometry and representation theory. The invariant ring of this trivial action is, of course, the ring $\poly_n$ itself. However, to emphasize analogies with the previous two sections, we will think of $\poly_n$ in this context as the {\bf ring of asymmetric functions} $\asym_n$.

Bases of $\asym_n$ are indexed by weak compositions $a$ of length at most $n$, with the most obvious basis of $\asym_n$ being given by individual monomials:
\[
\monomial_a \coloneqq \xx^a.
\]
Just as $\{ m_\lambda\}$ is not the most interesting basis of $\sym_n$, the $\{\monomial_a\}$ basis of $\asym_n$ is not very interesting either! We will explore here seven additional bases of rather less trivial nature.

\begin{figure}[h]
\begin{tikzpicture}
\node[plaque,posplaque] (Schubert) {\pcolor{$\schub_a$}};
\node[right=1 of Schubert,plaque] (Demazure) {\pcolor{$\key_a$}};
\node[right=1 of Demazure,plaque] (qkey) {\pcolor{$\qkey_a$}};
\node[above right=0.5 and 1 of qkey,plaque] (atom) {\pcolor{$\atom_a$}};
\node[below right=0.5 and 1 of qkey,posplaque] (slide) {\pcolor{$\slide_a$}};
\node[below right = 0.5 and 1 of slide,posplaque] (mslide) {\pcolor{$\mslide_a$}};
\node[below right =0.5 and 1 of atom,plaque] (particle) {\pcolor{$\particle_a$}};
\node[below right = 0.5 and 1 of particle,posplaque] (monomial) {\pcolor{$\monomial_a$}};
  \begin{scope}[nodes = {draw = none}]
    \path (Schubert) edge[posex]   (Demazure)
    (Demazure) edge[posex]  (qkey)
    (qkey) edge[posex] (slide)
    (qkey) edge[posex] (atom)
    (slide) edge[posex] (mslide)
    (slide) edge[posex] (particle)
    (atom) edge[posex] (particle)
    (particle) edge[posex] (monomial)
    (mslide) edge[posex] (monomial)
      ;
  \end{scope}
\end{tikzpicture}
\caption{The eight bases of $\asym_n$ considered here. The arrows denote that the basis at the head refines the basis at the tail. The star-shaped nodes have positive structure coefficients.}\label{fig:poly}
\end{figure}
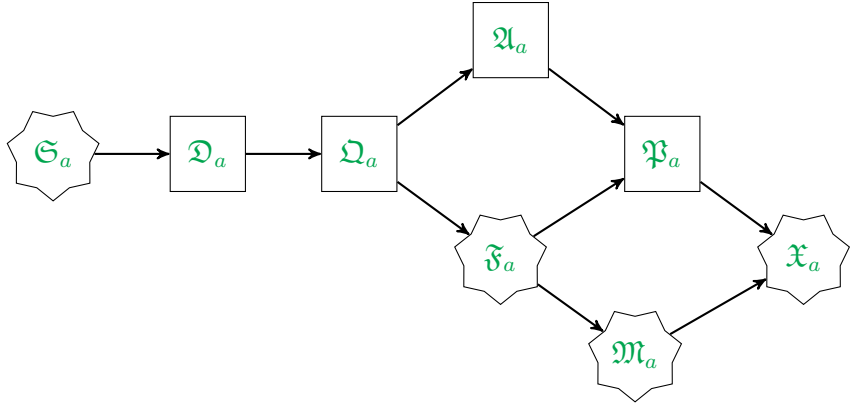

Arguably, the most interesting basis of $\asym_n$ is given by the \emph{Schubert polynomials} of A.~Lascoux and M.-P.~Sch\"utzenberger \cite{Lascoux.Schutzenberger}. Instead of indexing Schubert polynomials by weak compositions, it is more convenient to index them by permutations. Hence, we first recall a standard way to translate between permutations and weak compositions. For a permutation $\pi  \in \mathcal{S}_n$, let $a_i$ denote the number of integers $j > i$ such that $w(i) > w(j)$. (Note that $a_i \leq n-i$.) The weak composition $a_\pi = (a_1, a_2, \dots, a_n)$ is called the {\bf Lehmer code} of $\pi$. Visually, one may determine the Lehmer code of a permutation $\pi$ as follows.

Consider an $n \times n$ grid of boxes and place a laser gun (or dot) in each position $(i, \pi(i))$. Each laser gun fires to the right and down, destroying all boxes directly to its right and all boxes directly below itself (including its own box). The surviving boxes are the {\bf Rothe diagram} $RD(\pi)$ of the permutation $\pi$. One checks that the Lehmer code of $\pi$ records the number of boxes in each row of $RD(\pi)$.

\begin{example}
Let $\pi=2413$. The Rothe diagram $RD(\pi)$ is shown below
\[
 \begin{tikzpicture}[x=1.5em,y=1.5em]
      \draw[color=black, thick](0,1)rectangle(4,5);
     \filldraw[color=black, fill=gray!30, thick](0,4)rectangle(1,5);
     \filldraw[color=black, fill=gray!30, thick](0,3)rectangle(1,4);
     \filldraw[color=black, fill=gray!30, thick](2,3)rectangle(3,4);
     \draw[thick,red] (1.5,4.5)--(1.5,1);
      \draw[thick,red] (1.5,4.5)--(4,4.5);
      \draw[thick,red] (3.5,3.5)--(3.5,1);
      \draw[thick,red] (3.5,3.5)--(4,3.5);
       \draw[thick,red] (0.5,2.5)--(0.5,1);
      \draw[thick,red] (0.5,2.5)--(4,2.5);
      \draw[thick,red] (2.5,1.5)--(2.5,1);
      \draw[thick,red] (2.5,1.5)--(4,1.5);
           \filldraw [black](1.5,4.5)circle(.1);
     \filldraw [black](3.5,3.5)circle(.1);
     \filldraw [black](0.5,2.5)circle(.1);
     \filldraw [black](2.5,1.5)circle(.1);
     \end{tikzpicture},
     \]
   where the surviving boxes are shaded in grey. Hence, the Lehmer code of $\pi = 2413$ is $(1,2,0,0)$.
\end{example}

Consider the action of $\mathcal{S}_n$ on $\poly_n$ from Section~\ref{sec:sym}, where permutations act by permuting variables. Now, for each positive integer, define an operator $\partial_i$ on $\poly_n$ by
\[
\partial_i(f) \coloneqq \frac{f - (i \; i+1) \cdot f}{x_i - x_{i+1}}.
\]
Note that $\partial_i(f)$ is symmetric in the variables $x_i$ and $x_{i+1}$. Now, for each permutation $w$ of the form $n (n-1) \cdots 321$ (in one-line notation), the {\bf Schubert polynomial} $\schub_w$ is defined to be
\begin{equation}
\schub_w \coloneqq \prod_{i = 1}^n x_i^{n-i} = \monomial_{(n-1, n-2, \dots,1, 0)}.\label{eq:schub_w0}
\end{equation}
(These permutations are exactly those that are longest in Coxeter length in $\mathcal{S}_n$ for some $n$.) For other permutations $w$, the corresponding Schubert polynomials are defined recursively by 
\[
\schub_w \coloneqq \partial_i \schub_{w (i \; i+1)},
\]
for any $i$ such that $w(i) < w(i+1)$. Amazingly, this recursive definition is self-consistent, so there is a uniquely defined Schubert polynomial $\schub_w$ for each permutation $w$.

Our first task in this section will be to obtain a more concrete understanding of Schubert polynomials by describing how to write them non-recursively in the monomial basis $\{ \monomial_a \}$. We'll describe three different combinatorial formulas for this expansion, exploring some other families of polynomials along the way. 

The first such formula to be proven was given by S.~Billey, W.~Jockusch, and R.~Stanley \cite{Billey.Jockusch.Stanley}. For a permutation $\pi$, a {\bf reduced factorization} of $\pi$ is a way of writing $\pi$ as a product $s_{i_1} s_{i_2} \cdots s_{i_k}$ of simple transpositions with $k$ as small as possible. The sequence of subscripts $i_1 i_2 \cdots i_k$ is called a {\bf reduced word} for $\pi$. We write $\reduced(\pi)$ for the set of all reduced words of the permutation $\pi$. Note that every reduced word $\alpha$ is a strong composition. Given two strong compositions $\alpha$ and $\beta$, we say that $\beta$ is {\bf $\alpha$-compatible} if 
\begin{enumerate}
\item[(R.1)] $\alpha$ and $\beta$ have the same length,
\item[(R.2)] $\beta$ is weakly increasing (i.e., $\beta_i \leq \beta_j$ for $i < j$),
\item[(R.3)] $\beta$ is bounded above by $\alpha$ (i.e., $\beta_i \leq \alpha_i$ for all $i$), and
\item[(R.4)] $\beta$ strictly increases whenever $\alpha$ does (i.e., if $\alpha_i < \alpha_{i+1}$, then $\beta_i < \beta_{i+1}$).
\end{enumerate} In this case, we write $\beta \looparrowright \alpha$.

\begin{example}\label{ex:compatible_sequence}
If $\alpha$ is the strong composition $121$ (a reduced word for the longest permutation in $\mathcal{S}_3$), then no strong composition is $\alpha$-compatible. For suppose $\beta$ were $\alpha$-compatible. Since $\alpha_1 < \alpha_2$, we must have $\beta_1 < \beta_2$ by (R.4). Hence, $\beta_2 \geq 2$. Therefore, by (R.2), $\beta_3 \geq 2$. But this is incompatible with (R.3), since $\alpha_3 = 1$.

On the other hand, for $\gamma = 212$ (the other reduced word for this permutation), there is exactly one $\gamma$-compatible strong composition $\delta$. By (R.3), we have $\delta_2 = 1$, and hence by (R.2) we also have $\delta_1 = 1$. By (R.4), $\delta_3 > \delta_2 = 1$, but by (R.3) $\delta_3 \leq 2$. Hence, $\delta = 112$ is the only $\gamma$-compatible strong composition. We write $112 \looparrowright 212$.
\end{example}

\begin{theorem}[{\cite[Theorem~1.1]{Billey.Jockusch.Stanley}}]\label{thm:schub2X}
The Schubert polynomials expand positively in monomials:
\[
\schub_\pi = \sum_{\alpha \in \reduced(\pi)} \sum_{\beta \looparrowright \alpha} \prod_i x_{\beta_i}.
\] 
\end{theorem}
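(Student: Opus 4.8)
The plan is to identify the right-hand side --- call it $\mathcal{X}_\pi := \sum_{\alpha \in \reduced(\pi)} \sum_{\beta \looparrowright \alpha} \prod_i x_{\beta_i}$ --- with $\schub_\pi$ by showing it obeys the recursion that defines Schubert polynomials, and then appealing to the self-consistency of that recursion (granted in the text). Since $\mathcal{X}_\pi$ manifestly depends on $\pi$ only through its reduced words and the conditions (R.1)--(R.4), it is already well defined and stable under $\mathcal{S}_n \hookrightarrow \mathcal{S}_{n+1}$; so it suffices to check the two facts that pin down $\schub$: first, that $\mathcal{X}_{w_0} = \monomial_{(n-1,n-2,\ldots,1,0)}$ for $w_0 = n(n-1)\cdots 21$ (matching \eqref{eq:schub_w0}); and second, that $\partial_i \mathcal{X}_v = \mathcal{X}_{vs_i}$ whenever $i$ is a descent of $v$ (so $\ell(vs_i) = \ell(v)-1$). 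Granting these, I run a downward induction on Coxeter length: if $\pi \neq w_0$, choose an ascent $i$ of $\pi$ and set $v := \pi s_i$, which has descent $i$ and length $\ell(\pi)+1$; then $\mathcal{X}_\pi = \partial_i \mathcal{X}_v = \partial_i \schub_v = \schub_\pi$, using the second property, the inductive hypothesis, and the defining recursion $\schub_\pi = \partial_i \schub_{\pi s_i}$.

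For the base case I would argue directly. Every letter of a reduced word $\alpha$ for $w_0 \in \mathcal{S}_n$ lies in $\{1,\ldots,n-1\}$, so any $\alpha$-compatible $\beta$ takes values there; since $\sigma_1$ must occur in $\alpha$ and $\beta$ is weakly increasing and bounded by $\alpha$, one gets $\beta_1 = 1$, and then restricting to the letters $\geq 2$ of $\alpha$ (a reduced word for the longest element of the parabolic $\mathcal{S}_{n-1}$, index-shifted) lets one induct on $n$ to conclude that the only contributing pair is the ``staircase'' reduced word paired with $\beta = (1^{\,n-1}, 2^{\,n-2}, \ldots, (n-1)^{\,1})$, which yields $x_1^{n-1}x_2^{n-2}\cdots x_{n-1}$ exactly once. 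In small cases this is immediate, cf.\ Example~\ref{ex:compatible_sequence}.

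The crux is the identity $\partial_i \mathcal{X}_v = \mathcal{X}_{vs_i}$ for $i$ a descent of $v$. Here I would write $\partial_i \mathcal{X}_v = \sum_{\alpha \in \reduced(v)} \sum_{\beta \looparrowright \alpha} \bigl(\prod_{k \neq i, i+1} x_k^{m_k}\bigr)\, \partial_i\!\bigl(x_i^{m_i} x_{i+1}^{m_{i+1}}\bigr)$ with $m_k := \#\{j : \beta_j = k\}$, and use that $\partial_i(x_i^p x_{i+1}^q)$ is a positive sum of monomials of degree $p+q-1$ when $p>q$, its negative when $p<q$, and $0$ when $p=q$. The reduced words of $v$ split into those ending in the letter $i$ --- which correspond bijectively, by deleting that final $i$, to reduced words of $vs_i$ --- and those not ending in $i$. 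The combinatorial content to extract is that after applying $\partial_i$ the contributions of the first family reassemble, once one tracks how (R.2)--(R.4) behave under deleting the last letter and how the trailing value $\beta_\ell \le i$ interacts with the $\partial_i$-expansion, into precisely $\mathcal{X}_{vs_i}$, while the contributions of the second family cancel in pairs swapped by the $x_i \leftrightarrow x_{i+1}$ symmetry implicit in $\partial_i$.

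I expect this last bookkeeping --- matching up compatible sequences across the deletion of a letter and verifying that every spurious monomial cancels --- to be the main obstacle; it is essentially the whole of the Billey--Jockusch--Stanley argument, and it is where the exact form of the compatibility conditions (R.1)--(R.4), rather than some variant, is forced.
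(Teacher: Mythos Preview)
The paper does not prove this theorem at all: it is a survey, and the result is simply stated with a citation to \cite{Billey.Jockusch.Stanley}. So there is nothing to compare against, and the question is whether your sketch stands on its own.

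Your overall strategy---show that $\mathcal{X}_\pi$ satisfies the base case \eqref{eq:schub_w0} and the divided-difference recursion, then conclude by downward induction on length---is a legitimate route to the result. But the specific mechanism you propose for the recursion step is incorrect. You claim that, for $i$ a descent of $v$, the reduced words of $v$ ending in the letter $i$ contribute exactly $\mathcal{X}_{vs_i}$ after applying $\partial_i$, while those not ending in $i$ cancel. Take $v = s_1 s_3$ and $i=1$. The reduced words are $(3,1)$ (ending in $1$) and $(1,3)$ (not). The unique $(3,1)$-compatible sequence is $(1,1)$, giving $x_1^2$, and $\partial_1(x_1^2) = x_1 + x_2$; but $\mathcal{X}_{vs_1} = \mathcal{X}_{s_3} = x_1 + x_2 + x_3$. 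Meanwhile the $(1,3)$-compatible sequences are $(1,2)$ and $(1,3)$, giving $x_1x_2 + x_1x_3$, and $\partial_1(x_1x_2 + x_1x_3) = x_3 \neq 0$. The total is correct, but neither family behaves as you claim. So the cancellation is not organized by ``ends in $i$ or not,'' and the argument as written does not go through; a genuinely different bookkeeping (closer to what Billey--Jockusch--Stanley actually do, or the pipe-dream/nilCoxeter reformulations) is required.

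Your base-case sketch also has a slip: deleting the letters equal to $1$ from a reduced word $\alpha$ of $w_0$ need not yield a reduced word for the shifted longest element (e.g.\ $\alpha = 212$ gives $22$). A correct argument instead observes that on each maximal block where $\beta$ is constant equal to $j$, condition (R.4) forces $\alpha$ to be strictly decreasing with values in $\{j,\dots,n-1\}$, so the block has length at most $n-j$; summing and comparing with $\binom{n}{2}$ forces equality everywhere and pins down the unique contributing pair.
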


\begin{example}
Let $\pi = 321 =  s_1 s_2 s_1 = s_2 s_1 s_2$ be the longest permutation in $\mathcal{S}_3$. Then, by Example~\ref{ex:compatible_sequence}, we have 
\begin{align*}
\schub_\pi &= \sum_{\alpha \in \reduced(\pi)} \sum_{\beta \looparrowright \alpha} \prod_i x_{\beta_i} \\
&= \sum_{\beta \looparrowright 121} \prod_i x_{\beta_i} +  \sum_{\delta \looparrowright 212} \prod_i x_{\delta_i} \\
&= 0 + x_1 x_1 x_ 2 = x_1^2 x_2 = \monomial_{(2,1,0)}.
\end{align*}
Note that this calculation is consistent with the definition given in Equation~\eqref{eq:schub_w0}.
\end{example}

It might be reasonable to expect an important basis of $\asym_n$ to restrict to an important basis of the subspace $\sym_n \subset \asym_n$. Indeed, one piece of evidence for the importance of Schubert polynomials is that those Schubert polynomials lying inside $\sym_n$ are exactly its basis of Schur polynomials. That is, every Schur polynomial is a Schubert polynomial and the Schubert basis of $\asym_n$ is a lift of the Schur basis of $\sym_n$. 

To realize the Schur polynomial $s_\lambda \in \sym_n$ as a Schubert polynomial, first realize the partition $\lambda$ as a weak composition of length $n$ by padding it by an appropriate number of final $0$s. Now, reverse the letters of $\lambda$, so it becomes a weakly increasing sequence. The resulting weak composition is the Lehmer code of a unique permutation $\pi_\lambda$, and one has $s_\lambda = \schub_{\pi_\lambda}$. Equivalently, for $i \leq n$ one has $\pi_\lambda(i) = \lambda_{n-i + 1} + i$ and for $i > n$ one has $\pi_\lambda(i) = \min \left( \mathbb{Z}_{> 0}  \setminus \{ \pi_\lambda(j) : j < i   \} \right)$.

Since the Schubert polynomials lift the Schur polynomials, one might wonder whether the Littlewood-Richardson rule (Theorem~\ref{thm:LRrule}) also lifts to a positive combinatorial rule for the structure coefficients of the Schubert basis. Indeed, the Schubert basis of $\asym_n$, like the Schur basis of $\sym_n$, has positive structure coefficients! However, no combinatorial proof of this fact is known and we lack any sort of positive combinatorial rule (even conjectural) to describe these structure coefficients (except in a few very special cases, such as when the Schubert polynomials are actually Schur polynomials). Discovering and proving such a rule is one of the most important open problems in algebraic combinatorics. Part of our motivation for studying the combinatorial theory of $\asym_n$ is the hope that such a theory will eventually lead to a Schubert structure coefficient rule, just as the Littlewood-Richardson rule for Schur polynomial structure coefficients eventually developed from the combinatorial theory of $\sym_n$.

Without such a combinatorial rule, how then do we know that the Schubert basis has positive structure coefficients? The answer comes, once again, from geometry and from representation theory. Geometrically, instead of looking at a complex Grassmannian, as we did for Schur polynomials, we should consider a complex flag variety $\mathrm{Flags}_n$, the classifying space for complete flags $V_0 \subset V_1 \subset \cdots \subset V_n$ of nested complex vector bundles with $V_k$ of rank $k$. This space has an analogous cell decomposition by Schubert varieties, yielding an effective basis of the Chow ring. By identifying Schubert varieties with corresponding Schubert polynomials, multiplying Schubert polynomials corresponds to the intersection product on Schubert varieties and positivity of structure coefficients follows. An alternative proof of positivity \cite{Watanabe:approach,Watanabe:tensor} is given by interpreting Schubert polynomials as characters of certain \emph{KP-modules} (introduced in \cite{Kraskiewicz.Pragacz:1,Kraskiewicz.Pragacz:2}) for Borel Lie algebras.

The formula of Theorem~\ref{thm:schub2X} naturally leads us to consider another family of polynomials. Suppose we fix a reduced word $\alpha \in \reduced(\pi)$ for some $\pi \in \mathcal{S}_n$. Then, Theorem~\ref{thm:schub2X} suggests defining a polynomial
\begin{equation}\label{eq:slide_by_reduced_word}
\slide(\alpha) \coloneqq \sum_{\beta \looparrowright \alpha} \prod_i x_{\beta_i},
\end{equation}
so that Theorem~\ref{thm:schub2X} may be rewritten as
\[
\schub_\pi = \sum_{\alpha \in \reduced(\pi)} \slide(\alpha).
\]
Indeed, the formula of Equation~\eqref{eq:slide_by_reduced_word} makes sense for any strong composition $\alpha$, not necessarily a reduced word of a permutation. 

Labeling these polynomials by strong compositions $\alpha$ is unnatural for at least two reasons. For some $\alpha$, we have $\slide(\alpha) = 0$; for example, we have $\slide(1,2,1) = 0$ by Example~\ref{ex:compatible_sequence}. Those $\slide(\alpha)$ that are nonzero are called the {\bf fundamental slide polynomials}; these were introduced in \cite{Assaf.Searles}, although the alternate definition we give here follows \cite{Assaf:Stanley}. Also, for $\alpha \neq \alpha'$, we can have $\slide(\alpha) = \slide(\alpha') \neq 0$; for example, by Example~\ref{ex:compatible_sequence} we have $\slide(212) = \monomial_{(2,1,0)}$, but it is also equally clear that $\slide(312) = \monomial_{(2,1,0)}$.

For any strong composition $\alpha$, note that, if $\alpha$ has any compatible sequences, then it has a unique such compatible sequence $\beta(\alpha)$ that is termwise maximal. Let $a(\alpha)_i$ denote the multiplicity of $i$ in $\beta(\alpha)$. Then, we define
\[
\slide_{a(\alpha)} \coloneqq \slide(\alpha).
\]
It is clear then that every fundamental slide polynomial is, in this fashion, uniquely indexed by a weak composition $a$. Moreover, every weak composition $a$ appears as an index on some $\slide_a$, and we have $\slide_a \neq \slide_b$ if $a \neq b$. It is then not hard to see by triangularity in the $\monomial_a$ basis that the set of fundamental slide polynomials forms another basis of $\poly_n$.

Clearly, the fundamental slide polynomials expand positively in the monomial basis $\{ \monomial_a \}$. It is useful to have a formula for this expansion of $\slide_a$, based only on the weak composition $a$. We first need a partial order on weak compositions: we write $a \geq b$ and say $a$ {\bf dominates} $b$ if we have
\[
\sum_{i=1}^k a_i \geq \sum_{i=1}^k b_i
\]
for all $k$. (Note that the restriction of this partial order to the set of partitions recovers the usual notion of dominance order.)

\begin{theorem}[\cite{Assaf:Stanley,Assaf.Searles}]\label{thm:slide2X}
The fundamental slide polynomials expand positively in monomials:
\[
\slide_a = \sum_{\substack{b \geq a \\ b^+ \vDash a^+ }} \monomial_b
\]
\end{theorem}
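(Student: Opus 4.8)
The plan is to reduce the identity to a combinatorial bijection by evaluating $\slide_a$ on a convenient representative. Given a weak composition $a$ of length $n$, I would let $\alpha = \alpha(a)$ be the (weakly increasing) strong composition consisting of $a_1$ copies of $1$, then $a_2$ copies of $2$, and so on. First I would check that $\alpha(a)$ is its own termwise-maximal compatible sequence: by (R.3) every $\beta \looparrowright \alpha(a)$ has $\beta_i \le \alpha(a)_i$, while $\alpha(a) \looparrowright \alpha(a)$ since (R.2) is the defining monotonicity of $\alpha(a)$ and (R.1), (R.3), (R.4) are immediate when $\beta = \alpha$. Hence $a(\alpha(a)) = a$, so by definition $\slide_a = \slide(\alpha(a))$, and Equation~\eqref{eq:slide_by_reduced_word} gives $\slide_a = \sum_{\beta \looparrowright \alpha(a)} \xx^{b(\beta)}$, where $b(\beta)$ is the weak composition recording the multiplicity of each value in $\beta$. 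As a weakly increasing word of fixed length is recovered from its multiplicity vector, it remains to show that $\beta \mapsto b(\beta)$ is a bijection from $\{\beta : \beta \looparrowright \alpha(a)\}$ onto $\{\, b : b \ge a,\ b^+ \vDash a^+ \,\}$.

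Next I would set up the translation. Write $a^+ = (c_1, \dots, c_k)$ with the nonzero entries of $a$ in positions $p_1 < \dots < p_k$, and put $C_j = c_1 + \dots + c_j$. Then $\alpha(a)$ is constant equal to $p_j$ on its $j$th block of positions $C_{j-1}+1, \dots, C_j$, and the positions $i$ with $\alpha(a)_i < \alpha(a)_{i+1}$ are exactly $C_1, \dots, C_{k-1}$. So $\beta \looparrowright \alpha(a)$ is equivalent to: $\beta$ is weakly increasing of length $|a|$, its $j$th block has all entries $\le p_j$, and $\beta_{C_j} < \beta_{C_j+1}$ for $j < k$. For such $\beta$, the strict jumps at block boundaries force each value to occur inside a single block, so $b(\beta)^+$ is the concatenation of the $k$ within-block multiplicity compositions, the $j$th of which is a composition of $c_j$; thus $b(\beta)^+ \vDash a^+$. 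Moreover $\sum_{i \le m} b(\beta)_i$ counts the entries of $\beta$ that are $\le m$, which is at least $\sum_{p_j \le m} c_j = \sum_{i \le m} a_i$ because every entry of a block $j$ with $p_j \le m$ is $\le p_j \le m$; hence $b(\beta) \ge a$. This gives the easy inclusion.

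The substantive step, which I would carry out last, is the reverse inclusion. Given $b$ with $b^+ \vDash a^+$ and $b \ge a$, one has $|b| = |b^+| = |a^+| = |a|$, so the unique weakly increasing word $\beta$ with multiplicity vector $b$ has the same length as $\alpha(a)$. Using $b^+ \vDash a^+$ to cut $\beta$ into consecutive blocks of sizes $c_1, \dots, c_k$, the required strict increases at $C_1, \dots, C_{k-1}$ are automatic, since the nonzero positions of $b$ are strictly increasing, so each block maximum is strictly below the next block's minimum. It then remains to verify $\beta_i \le \alpha(a)_i$, which --- both sides being weakly increasing and $\alpha(a)$ constant on blocks --- reduces to $\beta_{C_j} \le p_j$ for every $j$. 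But $\beta_{C_j} \le p_j$ says exactly that $\beta$ has at least $C_j$ entries $\le p_j$, i.e.\ $\sum_{i \le p_j} b_i \ge C_j = \sum_{i \le p_j} a_i$, which is the dominance inequality $b \ge a$ evaluated at $m = p_j$. Thus $\beta \looparrowright \alpha(a)$ with $b(\beta) = b$, establishing the bijection and the theorem.

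The hard part will be this reverse inclusion --- more precisely, recognizing that the single family of dominance inequalities defining $b \ge a$ is calibrated exactly to condition (R.3) for $\alpha(a)$: only the inequalities at $m = p_1, \dots, p_k$ matter, and each is both forced by (R.3) and, given $b^+ \vDash a^+$, sufficient to reconstruct a genuine compatible sequence. A minor point is the well-definedness of the symbol $\slide_a$ (independence of the choice of reduced word), but fixing the explicit representative $\alpha(a)$ makes this moot.
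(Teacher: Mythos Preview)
Your argument is correct. The paper is a survey and does not supply a proof of this theorem; it merely states the formula with a citation to \cite{Assaf:Stanley,Assaf.Searles}. So there is no ``paper's own proof'' to compare against, and your direct argument stands on its own.

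A few remarks on the execution. Your key move---choosing the canonical representative $\alpha(a)$ (the weakly increasing word with $a_i$ copies of $i$) and noting that it is its own maximal compatible sequence, so that $a(\alpha(a))=a$---is exactly the right way to ground the symbol $\slide_a$ without appealing to any well-definedness statement. The bijection you set up is clean: the block decomposition of $\alpha(a)$ along the positions $C_1,\dots,C_{k-1}$ translates (R.4) into the strict jumps that force each value of $\beta$ to live in a single block, which is precisely what makes $b(\beta)^+ \vDash a^+$; and your observation that checking (R.3) only at the right endpoints $C_j$ of blocks converts it exactly into the dominance inequalities $\sum_{i\le p_j} b_i \ge \sum_{i\le p_j} a_i$ is the crux. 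One tiny point worth making explicit (you gesture at it) is that the refinement $b^+ \vDash a^+$ is \emph{unique} when it exists, since partial sums of $b^+$ must hit the partial sums $C_1,\dots,C_k$ in order; this is why the block cut of $\beta$ in the reverse direction is unambiguous. With that noted, the proof is complete.
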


Essentially by definition, the fundamental slide polynomials are pieces of Schubert polynomials. Although the basis of Schubert polynomials has positive structure constants, there is no reason to expect this property to descend to this basis of pieces. Remarkably, however, the fundamental slide polynomials also have positive structure constants! The first clue that this might be the case comes from considering the intersection of the fundamental slide basis with the subring $\qsym_n \subset \poly_n$.

\begin{theorem}[\cite{Assaf.Searles}]\label{thm:slide_lift_fund}
We have $\slide_a \in \qsym_n$ if and only if $a$ is of the form $0^k \alpha$, where $\alpha$ is a strong composition of length $n-k$ and $0^k \alpha$ denotes the weak composition obtained from $\alpha$ by prepending $k$ $0$s.

Moreover, we have $\slide_{0^k\alpha} = \fundamental_\alpha$. Thus, the fundamental slide basis of $\poly_n$ is a lift of the fundamental quasisymmetric polynomial basis of $\qsym_n$.
\end{theorem}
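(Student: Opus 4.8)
The plan is to establish the ``moreover'' statement first; the forward implication of the biconditional is then immediate, and I will handle the reverse implication separately. For the ``moreover'' statement, I combine Theorem~\ref{thm:slide2X} with the definition of $\fundamental_\alpha$. Theorem~\ref{thm:slide2X} writes $\slide_{0^k\alpha}$ as the \emph{multiplicity-free} sum of $\monomial_b$ over all weak compositions $b$ of length at most $n$ with $b \ge 0^k\alpha$ and $b^+ \vDash (0^k\alpha)^+ = \alpha$, while by definition $\fundamental_\alpha = \sum_b \monomial_b$ ranges over all (distinct) weak compositions $b$ of length at most $n$ with $b^+ \vDash \alpha$. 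So it suffices to prove the purely combinatorial claim that, with $k = n - \ell(\alpha)$, the dominance condition $b \ge 0^k\alpha$ is \emph{automatic} whenever $b^+ \vDash \alpha$ and $\ell(b) \le n$.

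I expect this automatic-dominance claim to be the crux of the argument. To prove it, fix such a $b$ and set $r = \ell(b^+)$, so that $b$ has exactly $n - r$ zero entries and $r \ge \ell(\alpha)$. The inequality $\sum_{i \le j} b_i \ge \sum_{i \le j}(0^k\alpha)_i$ is trivial for $j \le k$ since the right side is $0$, so fix $j$ with $k < j \le n$; then the right side is $\alpha_1 + \cdots + \alpha_{j-k}$. Record the refinement $b^+ \vDash \alpha$ as a grouping $0 = c_0 < c_1 < \cdots < c_{\ell(\alpha)} = r$ of the parts of $b^+$, so that $\alpha_1 + \cdots + \alpha_{j-k} = b^+_1 + \cdots + b^+_{c_{j-k}}$. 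The key sublemma is that the first $c_{j-k}$ positive entries of $b$ all lie in positions $1, \dots, j$: if the $c_{j-k}$-th of them sits in position $p$, then positions $1, \dots, p$ contain $c_{j-k}$ positive entries and at most $n - r$ zeros, whence $p \le c_{j-k} + (n-r)$; and the $\ell(\alpha) - (j-k) = n - j$ parts $\alpha_{j-k+1}, \dots, \alpha_{\ell(\alpha)}$ each absorb at least one part of $b^+$, so $r - c_{j-k} \ge n - j$, i.e.\ $c_{j-k} + (n-r) \le j$. Hence $p \le j$, giving $\sum_{i \le j} b_i \ge b^+_1 + \cdots + b^+_{c_{j-k}} = \alpha_1 + \cdots + \alpha_{j-k}$. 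This proves the claim, hence $\slide_{0^k\alpha} = \fundamental_\alpha$, and in particular $\slide_{0^k\alpha} \in \qsym_n$.

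For the reverse implication I argue by contrapositive: if $a$ is not of the form $0^k\alpha$ with $\alpha$ a strong composition, then $a$ has a zero entry with a positive entry somewhere to its left. Let $b$ be obtained from $a$ by deleting that zero and prepending a new zero; then $b$ has length $n$, $b^+ = a^+$, and $b \ne a$. A short partial-sum computation shows $\sum_{i \le m} b_i = \sum_{i \le m-1} a_i$ for every $m$ up to the position of the deleted zero; taking $m$ to be the least position with $a_m > 0$ (which is strictly smaller than that deleted position) gives $\sum_{i\le m} b_i = 0 < a_m \le \sum_{i \le m} a_i$, so $b \not\ge a$. By Theorem~\ref{thm:slide2X}, $\monomial_a$ has coefficient $1$ in $\slide_a$ while $\monomial_b$ has coefficient $0$, even though $a^+ = b^+$; since a quasisymmetric polynomial must assign equal coefficients to $\monomial_a$ and $\monomial_b$, we conclude $\slide_a \notin \qsym_n$. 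The only genuine obstacle in this plan is the automatic-dominance sublemma of the second paragraph, which requires the careful bookkeeping of zero-versus-positive positions indicated there; the remainder is formal manipulation against Theorem~\ref{thm:slide2X} and the definition of $\fundamental_\alpha$.
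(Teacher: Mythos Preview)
Your argument is correct. The paper itself does not supply a proof of this theorem; it merely states the result with a citation to \cite{Assaf.Searles}, so there is no in-paper proof to compare against. Your approach---reducing the identity $\slide_{0^k\alpha}=\fundamental_\alpha$ via Theorem~\ref{thm:slide2X} to the combinatorial claim that $b\ge 0^k\alpha$ is automatic whenever $b^+\vDash\alpha$ and $b$ has length $n$, and then handling the converse by exhibiting a specific monomial $\monomial_b$ with $b^+=a^+$ but $b\not\ge a$---is clean and complete. The bookkeeping in your ``automatic dominance'' sublemma (bounding the position $p$ of the $c_{j-k}$-th positive entry by counting zeros and using that each remaining part of $\alpha$ absorbs at least one part of $b^+$) is exactly the right idea and is carried out without error.
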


In light of Theorem~\ref{thm:slide_lift_fund}, one might hope to extend the combinatorial multiplication rule of Theorem~\ref{thm:mult_F} to fundamental slide polynomials. Indeed, this is possible. We need to extend the notion of the shuffle product of two strong compositions from Section~\ref{sec:qsym} to the \emph{slide product} or pairs of weak compositions $a,b$. Here, we borrow notation from \cite{Pechenik.Searles}. As before, let $\mathcal{A}$ denote the alphabet of odd integers and let $\mathcal{B}$ denote the alphabet of even integers. Let $A$ be the word in $\mathcal{A}$ consisting of $a_1$ copies of $2\ell(a)-1$, followed by $a_2$ copies of $2\ell(a)-3$, all the way to $a_{\ell(a)}$ copies of $1$. Likewise, let $B$ denote the word in $\mathcal{B}$ consisting of $b_1$ copies of $2\ell(b)$, followed by $b_2$ copies of $2\ell(b)-2$, all the way to $b_{\ell(b)}$ copies of $2$. 

For any word $W$ in a totally ordered alphabet $\mathcal{Z}$, let $\Runs(W)$ denote the sequence of successive maximally increasing runs of letters of $W$ read from left to right. For a sequence $S$ of words in $\mathcal{Z}$ and any subalphabet $\mathcal{Y} \subseteq \mathcal{Z}$, write $\Comp_\mathcal{Y}(S)$ for the weak composition whose $i$th coordinate is the number of letters of $\mathcal{Y}$ in the $i$th word of $S$.

Let $\Sh(a,b)$ denote the set of those shuffles $C$ of $A$ and $B$ such that \[
\Comp_\mathcal{A}(\Runs(C)) \geq a  \text{ and } \Comp_\mathcal{B}(\Runs(C)) \geq b.\] 
For $C \in \Sh(a,b)$, let $\Bumpruns(C)$ denote the unique dominance-minimal way to insert words of length $0$
into $\Runs(C)$ while preserving $\Comp_\mathcal{A}(\Bumpruns(C)) \geq a$ and $\Comp_\mathcal{B}(\Bumpruns(C)) \geq b$.
Finally, define the {\bf slide product} $a \shuffle b$ of the weak compositions $a$ and $b$ as the formal sum of weak compositions
\[
a \shuffle b \coloneqq \sum_{C\in \Sh(a,b)} \Comp_\mathbb{Z}(\Bumpruns(C)). 
\]

\begin{example}\label{ex:mult_slide}
Let $a = (0,1,0,2)$ and $b = (1,0,0,1)$. Then we consider the words $A = 511$ and $B = 82$. The set of all shuffles of $A$ and $B$ is \[
\{ 51182, 51812,58112,85112, 51821,58121,85121, 58211,85211,82511 \}.
\]
Many of these shuffles $C$ fail $\Comp_\mathcal{B}(\Runs(C)) \geq b$; for example, with $C = 51821$, we have $\Runs(C) = (5,18,2,1)$ and hence $\Comp_\mathcal{B}(\Runs(C)) = (0,1,1,0) \not \geq b$.
Thus we have \[
\Sh(a,b) = \{  58112,85112, 58121,85121, 58211,85211,82511 \}.
\]
The corresponding $\Bumpruns(C)$ for $C \in \Sh(a,b)$ are
\[
\{ 
(58, \epsilon, \epsilon, 112), (8,5, \epsilon, 112), (58, \epsilon, 12,1), (8,5,12,1), (58, \epsilon, 2,11), (8,5,2,11), (8,25, \epsilon, 11)
\},
\]
where $\epsilon$ denotes the empty word.
 Thus, we have
\begin{align*}
(0,1,0,2) \shuffle (1,0,0,1) &= (2,0,0,3) + (1,1,0,3) + (2,0,2,1) + (1,1,2,1) + (2,0,1,2) \\
 &+ (1,1,1,2) + (1,2,0,2).
\end{align*}
\end{example}

Finally, we can state the multiplication rule for fundamental slide polynomials.

\begin{theorem}[\cite{Assaf.Searles}]\label{thm:mult_slide}
For weak compositions $a$ and $b$, we have 
\[
\slide_a \cdot \slide_b = \sum_c C_{a, b}^c \slide_c,
\]
where $C_{a, b}^c$ is the multiplicity of $c$ in the slide product $a \shuffle b$.
\end{theorem}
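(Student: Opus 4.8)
The plan is to reduce the claimed identity to a statement about coefficients of individual monomials, and then to establish that statement by an explicit weight-preserving bijection, in the same spirit as the classical proof of Theorem~\ref{thm:mult_F} for fundamental quasisymmetric polynomials. Throughout one uses Theorem~\ref{thm:slide2X}, which expands every fundamental slide polynomial multiplicity-freely in monomials.

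First I would expand both sides in the monomial basis. On the left,
\[
\slide_a \cdot \slide_b \;=\; \sum_{\substack{c \geq a,\; c^+ \vDash a^+\\ d \geq b,\; d^+ \vDash b^+}} \monomial_{c+d},
\]
so the coefficient of $\monomial_m$ is the number of pairs $(c,d)$ of weak compositions with $c+d = m$, $c \geq a$, $c^+ \vDash a^+$, $d \geq b$, and $d^+ \vDash b^+$. On the right, write $C_{a,b}^c$ as the number of shuffles $C \in \Sh(a,b)$ with $\Comp_{\mathbb{Z}}(\Bumpruns(C)) = c$; applying Theorem~\ref{thm:slide2X} again, the coefficient of $\monomial_m$ in $\sum_c C_{a,b}^c \slide_c$ is
\[
\sum_{C \in \Sh(a,b)} \Bigl[\, m \geq \Comp_{\mathbb{Z}}(\Bumpruns(C)) \ \text{ and }\ m^+ \vDash \bigl(\Comp_{\mathbb{Z}}(\Bumpruns(C))\bigr)^+ \,\Bigr],
\]
where $[\,\cdots\,]$ equals $1$ when the bracketed condition holds and $0$ otherwise. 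Thus it suffices to produce, for each fixed $m$, a bijection between the admissible pairs $(c,d)$ summing to $m$ and the shuffles $C \in \Sh(a,b)$ for which $\monomial_m$ is a term of $\slide_{\Comp_{\mathbb{Z}}(\Bumpruns(C))}$.

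To build the forward map, given an admissible pair $(c,d)$ summing to $m$, I would interleave the odd word $A$ associated to $a$ and the even word $B$ associated to $b$ into a single word $C$ so that its maximal increasing runs carry the odd letters in the pattern dictated by $c$ and the even letters in the pattern dictated by $d$. The hypotheses $c \geq a$ and $d \geq b$ are exactly what forces $\Comp_{\mathcal{A}}(\Runs(C)) \geq a$ and $\Comp_{\mathcal{B}}(\Runs(C)) \geq b$, so $C \in \Sh(a,b)$, while $c^+ \vDash a^+$ and $d^+ \vDash b^+$ are what make such an interleaving compatible with the rigid block structure of $A$ and $B$. Setting $e := \Comp_{\mathbb{Z}}(\Bumpruns(C))$, one checks that $m \geq e$ and $m^+ \vDash e^+$, so $C$ indeed contributes to the right-hand coefficient of $\monomial_m$. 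Conversely, from such a $C$ one recovers $c$ and $d$ from how the mass of $m$ splits between the odd and even subalphabets along the runs of $\Bumpruns(C)$, together with the way $m$ refines $e$.

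The main obstacle is verifying that this correspondence is well defined and bijective. The delicate point is that degeneracy is bookkept differently on the two sides: on the product side many distinct admissible pairs $(c,d)$ collapse onto the same $m$, and a single $\slide_c$ already absorbs an entire dominance-interval of monomials with fixed positive part, whereas on the slide-product side the same phenomenon is governed by the dominance inequalities cutting $\Sh(A,B)$ down to $\Sh(a,b)$ together with the insertion of empty runs via $\Bumpruns$. Proving that \emph{dominance-minimal insertion of empty runs} is precisely the operation that reconciles these two mechanisms is the technical heart of the argument. As a consistency check and a natural base case, specializing to $a = 0^j\alpha$ and $b = 0^k\beta$ should recover Theorem~\ref{thm:mult_F} via Theorem~\ref{thm:slide_lift_fund}: there no empty-run insertion is forced and $\Sh(a,b)$ reduces to the ordinary shuffle set, so one could alternatively organize the whole proof as an induction on the total number of zero entries among $a$ and $b$, with the inductive step tracking how introducing one more zero simultaneously splits monomials on the product side and creates one more potential empty run on the shuffle side.
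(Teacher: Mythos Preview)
The paper does not prove this theorem; it is a survey and simply states the result with a citation to \cite{Assaf.Searles}, so there is no in-paper argument to compare against.

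Your plan is the right one in outline: expanding both sides in the monomial basis via Theorem~\ref{thm:slide2X} and matching coefficients of each $\monomial_m$ by a bijection is exactly the natural strategy, and it is in the spirit of the original proof. But what you have written is a plan, not a proof, and the gaps are precisely where the content lies. Your forward map (``interleave $A$ and $B$ so that its maximal increasing runs carry the odd letters in the pattern dictated by $c$ and the even letters in the pattern dictated by $d$'') is not actually defined: you have not said which letters of $A$ and $B$ go into which run, nor checked that the resulting word is a shuffle (preserves the relative orders within $A$ and within $B$), nor that the run decomposition you intend is genuinely the decomposition into \emph{maximal} increasing runs of the concatenated word. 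The inverse is even less explicit---``how the mass of $m$ splits between the odd and even subalphabets along the runs'' presupposes you already know the correspondence you are trying to build. You correctly flag that reconciling the dominance-minimality in $\Bumpruns$ with the dominance condition in Theorem~\ref{thm:slide2X} is the technical heart, but you do not carry it out, and the alternative induction on the number of zeros is only gestured at.

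To make this a proof you must (i) give an explicit, well-defined map $(c,d)\mapsto C$ for each fixed $m$, (ii) verify $C\in\Sh(a,b)$ and that $m\geq \Comp_{\mathbb Z}(\Bumpruns(C))$ with $m^+\vDash \Comp_{\mathbb Z}(\Bumpruns(C))^+$, and (iii) exhibit the inverse and check both compositions are identities. None of these steps is automatic.
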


It seems reasonable to expect that the positivity of Theorem~\ref{thm:mult_slide} reflects some geometry or representation theory governed by fundamental slide polynomials. Sadly, no such interpretation of fundamental slide polynomials is known, except in the quasisymmetric case.

A second formula for Schubert polynomials uses the combinatorial model of pipe dreams. This model was successively developed in \cite{Bergeron.Billey, Fomin.Kirillov, Knutson.Miller}. A {\bf pipe dream} $P$ is a tiling of the grid of boxes (extending infinitely to the east and south) with {\bf turning pipes} $\turn$ and finitely many {\bf crossing pipes} $\cross$. Such a tiling gives rise to a collection of lines called {\bf pipes}, which one imagines traveling from the left side of the grid (the negative $y$-axis) to the top side (the positive $x$-axis). A pipe dream $P$ is called {\bf reduced} if no two pipes cross each other more than once. The permutation corresponding to a reduced pipe dream is the permutation given (in one-line notation) by the columns in which the pipes end. The {\bf weight} $\wt(P)$ of a pipe dream is the weak composition whose $i$th entry is the number of crossing pipe tiles in row $i$ of $P$ (where row $1$ is the top row).

\begin{example}
The pipe dream 
\[
P = \pipes{ & 1 & 2 & 3 & 4 \\ 
1 & \turn & \cross & \turn & \tail \\
2 & \cross &  \cross & \tail \\
3 &\turn & \tail \\
4 & \tail }
\]
corresponds to the permutation $1432$. (Here, we omit the infinite collection of $\turn$ tiles extending uninterestingly to the southeast.) The pipe dream $P$ has weight $(1,2,0)$.
\end{example}

Given a permutation $\pi$, let $\PD(\pi)$ denote the set of reduced pipe dreams for $\pi$.

\begin{theorem}\cite{Bergeron.Billey, Billey.Jockusch.Stanley}
The Schubert polynomial $\schub_\pi$ is the generating function of reduced pipe dreams for $\pi$, i.e.,
\[\schub_\pi = \sum_{P\in \PD(\pi)}\xx^{\wt(P)}.\]
\end{theorem}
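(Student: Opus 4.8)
The plan is to deduce this from the Billey--Jockusch--Stanley formula (Theorem~\ref{thm:schub2X}) by constructing a weight-preserving bijection between $\PD(\pi)$ and the set of pairs $(\alpha, \beta)$ with $\alpha \in \reduced(\pi)$ and $\beta \looparrowright \alpha$.

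First I would describe the forward map. Given $P \in \PD(\pi)$, scan its crossing tiles row by row from top to bottom and, within each row, from right to left; the $k$th crossing encountered, say in row $i$ and column $j$, contributes the letter $\alpha_k \coloneqq i+j-1$ to a word $\alpha = \alpha(P)$ and the value $\beta_k \coloneqq i$ to a sequence $\beta = \beta(P)$. The essential geometric input is that along each antidiagonal $\{i+j = \text{const}\}$ a crossing tile interchanges the two pipes occupying consecutive slots, so that the product $s_{\alpha_1} s_{\alpha_2} \cdots$ in this reading order is exactly the permutation recording the columns in which the pipes of $P$ terminate, namely $\pi$, while the total number of crossings is $\ell(\pi)$ precisely because $P$ is reduced (no two pipes cross twice). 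Hence $\alpha \in \reduced(\pi)$, which is (R.1). Scanning rows top to bottom makes $\beta$ weakly increasing, which is (R.2); a crossing in column $j \geq 1$ of row $i$ has $\alpha_k = i+j-1 \geq i = \beta_k$, which is (R.3); and, since columns within a fixed row are scanned in decreasing order, $\alpha$ strictly decreases across the crossings of any one row, so $\alpha_k < \alpha_{k+1}$ forces the $(k+1)$st crossing into a strictly lower row and hence $\beta_k < \beta_{k+1}$, which is (R.4). Thus $\beta \looparrowright \alpha$. Finally $\wt(P)_i$ counts the crossings in row $i$, which is the multiplicity of $i$ in $\beta$, so $\xx^{\wt(P)} = \prod_k x_{\beta_k}$.

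Next I would build the inverse. Given $\alpha \in \reduced(\pi)$ and $\beta \looparrowright \alpha$, place for each $k$ a crossing tile in row $\beta_k$ and column $\alpha_k - \beta_k + 1$ (positive by (R.3)) and fill the remaining boxes with turning tiles. Since a reduced word never repeats a letter consecutively, whenever $\beta_k = \beta_{k+1}$ condition (R.4) forces $\alpha_k > \alpha_{k+1}$, so within each fixed row the assigned columns are distinct and strictly decreasing and the tiling is well defined. By construction, reading this tiling in the order above recovers $(\alpha, \beta)$, so its associated word is the reduced word $\alpha$ for $\pi$ of length $\ell(\pi)$; by the same pipe-tracking fact, a pipe dream whose reading word is reduced and has product $\pi$ is itself reduced with underlying permutation $\pi$. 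Hence the two maps are mutually inverse, and
\[
\sum_{P \in \PD(\pi)} \xx^{\wt(P)} = \sum_{\alpha \in \reduced(\pi)} \sum_{\beta \looparrowright \alpha} \prod_i x_{\beta_i} = \schub_\pi
\]
by Theorem~\ref{thm:schub2X}.

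The main obstacle is the pipe-tracking lemma invoked twice above: that reading the crossings of a pipe dream in the prescribed order and multiplying the corresponding transpositions $s_{i+j-1}$ produces the endpoint permutation, together with the equivalence ``$P$ is reduced $\iff$ the resulting word is reduced.'' Proving this requires genuinely following the pipes, tracking which pipe occupies each antidiagonal slot as one sweeps through the grid; everything else is bookkeeping with (R.1)--(R.4). If one preferred a self-contained argument avoiding Theorem~\ref{thm:schub2X}, the alternative is to check that $\sum_{P \in \PD(\pi)} \xx^{\wt(P)}$ satisfies the defining recursion of Schubert polynomials: the base case $\pi = n(n-1)\cdots 1$ is the unique pipe dream with all crossings in the staircase $\{i+j \leq n\}$, giving $\monomial_{(n-1, n-2, \dots, 1, 0)}$ in agreement with \eqref{eq:schub_w0}, and the operator $\partial_i$ is realized by ``chute'' or ``ladder'' moves on pipe dreams; but setting up that move calculus is heavier than quoting Theorem~\ref{thm:schub2X}.
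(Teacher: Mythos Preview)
The paper is a survey and does not prove this theorem; it simply states it with attribution to \cite{Bergeron.Billey, Billey.Jockusch.Stanley}. Your proposal is correct and is precisely the classical argument in those references: the bijection $P \leftrightarrow (\alpha,\beta)$ between reduced pipe dreams and compatible pairs, read row by row (top to bottom, right to left within a row) with $\alpha_k=i+j-1$ and $\beta_k=i$, is exactly how \cite{Bergeron.Billey} connects their RC-graphs to the compatible-sequence formula of Theorem~\ref{thm:schub2X}. Your verification of (R.1)--(R.4) and of the weight-preservation is accurate, and you correctly identify the one nontrivial ingredient, namely that the reading word of a pipe dream is a (reduced) word for its permutation, which is a straightforward induction on the number of crossings once one tracks which pipe sits in which antidiagonal slot. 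The alternative route you sketch via chute/ladder moves and the $\partial_i$ recursion is also standard (it is closer in spirit to \cite{Bergeron.Billey}'s original presentation), and your assessment that it is heavier than simply quoting Theorem~\ref{thm:schub2X} is fair.
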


\begin{example}\label{ex:pipes}
We have $\schub_{15324} = \xx^{031} + \xx^{121} +\xx^{211} +\xx^{310} +\xx^{310} +\xx^{130} +\xx^{220}$, 
where the monomials are determined by the pipe dreams shown in Figure~\ref{fig:pipes}.
\end{example}

\begin{figure}[h]
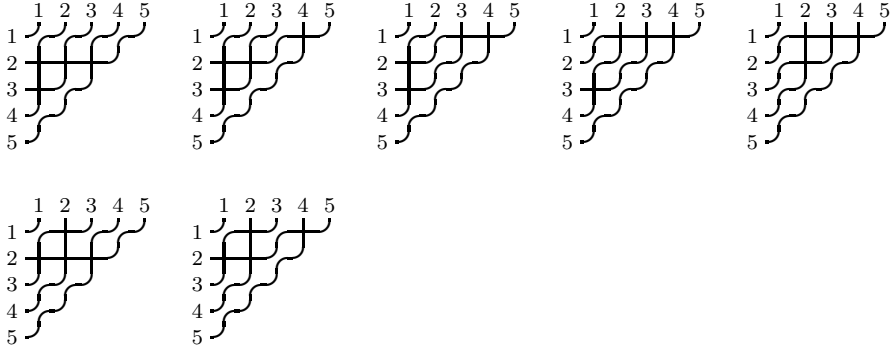

\begin{displaymath}
\begin{array}{ccccccc}
\pipes{ & 1 & 2 & 3 & 4 & 5 \\ 
1 & \turn & \turn & \turn & \turn & \tail \\
2 & \cross &  \cross & \cross& \tail \\
3 &\cross & \turn & \tail \\
4 & \turn & \tail \\
5 & \tail } &
\pipes{ & 1 & 2 & 3 & 4 & 5 \\ 
1 & \turn & \turn & \turn & \cross & \tail \\
2 & \cross &  \cross & \turn& \tail \\
3 &\cross & \turn & \tail \\
4 & \turn & \tail \\
5 & \tail } &
\pipes{ & 1 & 2 & 3 & 4 & 5 \\ 
1 & \turn & \turn & \cross & \cross & \tail \\
2 & \cross &  \turn & \turn& \tail \\
3 &\cross & \turn & \tail \\
4 & \turn & \tail \\
5 & \tail } &
\pipes{ & 1 & 2 & 3 & 4 & 5 \\ 
1 & \turn & \cross & \cross & \cross & \tail \\
2 & \turn &  \turn & \turn& \tail \\
3 &\cross & \turn & \tail \\
4 & \turn & \tail \\
5 & \tail } &
\pipes{ & 1 & 2 & 3 & 4 & 5 \\ 
1 & \turn & \cross & \cross & \cross & \tail \\
2 & \turn &  \cross & \turn& \tail \\
3 &\turn & \turn & \tail \\
4 & \turn & \tail \\
5 & \tail } \\ \\
\pipes{ & 1 & 2 & 3 & 4 & 5 \\ 
1 & \turn & \cross & \turn & \turn & \tail \\
2 & \cross &  \cross & \cross & \tail \\
3 &\turn & \turn & \tail \\
4 & \turn & \tail \\
5 & \tail } &
\pipes{ & 1 & 2 & 3 & 4 & 5 \\ 
1 & \turn & \cross & \turn & \cross & \tail \\
2 & \cross &  \cross & \turn& \tail \\
3 &\turn & \turn & \tail \\
4 & \turn & \tail \\
5 & \tail } 
\end{array}
\end{displaymath}
\caption{The 7 reduced pipe dreams associated to the permutation $15324$.}\label{fig:pipes}
\end{figure}

A reduced pipe dream $P$ is {\bf quasiYamanouchi} if the following is true for the leftmost $\cross$ in every row: Either
\begin{enumerate}
 \item it is in the leftmost column, or
\item it is weakly left of some $\cross$ in the row below it.
\end{enumerate}
For a permutation $\pi$, write $\QPD(\pi)$ for the set of quasiYamanouchi reduced pipe dreams for $\pi$. We obtain then the following formula for the fundamental slide polynomial expansion of a Schubert polynomial.

\begin{theorem}[\cite{Assaf.Searles}]\label{thm:Schub2slide}
The Schubert polynomials expand positively in the fundamental slide polynomials:
\[ \schub_\pi = \sum_{P \in \QPD(\pi)} \slide_{\wt(P)}.\]
\end{theorem}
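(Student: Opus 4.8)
\textit{Proof idea.} The plan is to sort the reduced pipe dreams of $\pi$ by their underlying reduced word and to collapse each group into a single fundamental slide polynomial via Theorem~\ref{thm:slide2X}. Recall the classical bijection between reduced pipe dreams and compatible sequences (Billey--Jockusch--Stanley, Fomin--Stanley): reading the crossings of $P\in\PD(\pi)$ right-to-left within each row and from the top row downward, a crossing in row $r$ and column $c$ contributes the letter $r+c-1$ to a word $\alpha$ and the letter $r$ to a word $\gamma$; this is a bijection $\PD(\pi)\leftrightarrow\bigsqcup_{\alpha\in\reduced(\pi)}\{\gamma:\gamma\looparrowright\alpha\}$ under which $\monomial_{\wt(P)}=\prod_i x_{\gamma_i}$, so that the pipe dream formula for $\schub_\pi$ is exactly Theorem~\ref{thm:schub2X} repackaged. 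Note that the crossings of row $r$ of $P$ form a maximal constant block of $\gamma$, within which $\alpha$ strictly decreases: the rightmost crossing of row $r$ is the first entry of the block and the leftmost crossing of row $r$ is its last entry.

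Fix $\alpha\in\reduced(\pi)$ admitting a compatible sequence, write $\beta(\alpha)$ for its termwise-maximal compatible sequence, and set $a(\alpha)=\wt(\beta(\alpha))$. Since a weakly increasing word is recovered from its multiplicity vector, the assignment $\gamma\mapsto\wt(\gamma)$ is injective on compatible sequences of $\alpha$; and because $\sum_{\gamma\looparrowright\alpha}\monomial_{\wt(\gamma)}=\slide(\alpha)=\slide_{a(\alpha)}$, Theorem~\ref{thm:slide2X} identifies its image with $\{b:b\ge a(\alpha)\text{ and }b^+\vDash a(\alpha)^+\}$.

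The crux, and the step I expect to be the main obstacle, is: \emph{among the pipe dreams $(\alpha,\gamma)$ with $\gamma\looparrowright\alpha$, exactly one is quasi-Yamanouchi, namely $(\alpha,\beta(\alpha))$.} For existence, suppose $(\alpha,\beta(\alpha))$ had a row $i$ violating the condition, so its leftmost crossing sits in a column $\ge 2$ and no crossing of row $i{+}1$ lies weakly to its right; raising the entire $\gamma$-block of row $i$ by one (moving each crossing from $(i,c)$ to $(i{+}1,c{-}1)$) is readily checked to give another compatible sequence of $\alpha$ -- the column bound (R.3) and condition (R.4) survive precisely because row $i$ was ``bad'' -- one that is termwise strictly above $\beta(\alpha)$, contradicting maximality. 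For uniqueness, suppose $(\alpha,\gamma)$ is quasi-Yamanouchi with $\gamma\ne\beta(\alpha)$, and take $p$ maximal with $\gamma_p<\beta(\alpha)_p$. Using the greedy-from-the-right description of $\beta(\alpha)$ one checks that $\gamma_p$ may be increased by one while remaining compatible, that $p$ is hence the last entry of its block (so its crossing is the leftmost in row $\gamma_p$, in a column $\ge 2$), and -- by comparing $\alpha_p$ with $\alpha_{p+1}$ exactly as in the existence argument -- that no crossing of row $\gamma_p{+}1$ then lies weakly right of it, so row $\gamma_p$ violates the quasi-Yamanouchi condition, a contradiction. Granting the claim, $Q\mapsto\alpha$ restricts to a bijection from $\QPD(\pi)$ onto $\{\alpha\in\reduced(\pi):\alpha\text{ has a compatible sequence}\}$, under which $\wt(Q)=a(\alpha)$.

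Finally, assemble the pieces:
\[
\sum_{Q\in\QPD(\pi)}\slide_{\wt(Q)}=\sum_{\alpha}\slide_{a(\alpha)}=\sum_{\alpha}\sum_{\gamma\looparrowright\alpha}\monomial_{\wt(\gamma)}=\sum_{P\in\PD(\pi)}\monomial_{\wt(P)}=\schub_\pi,
\]
where the first sum runs over $\alpha\in\reduced(\pi)$ having a compatible sequence, the second and third over all $\alpha\in\reduced(\pi)$ (those without compatible sequences contribute nothing), and the last equality is the pipe dream formula. An alternative, more pipe-dream-intrinsic route would define an explicit ``drop'' operation decreasing $\wt$ in dominance order and iterate it to a quasi-Yamanouchi representative, but the reduced-word bookkeeping above seems to package the required positivity most economically.
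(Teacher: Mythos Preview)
The paper is a survey and does not prove this theorem; it merely states the result and attributes it to \cite{Assaf.Searles}. So there is no ``paper's own proof'' to compare against, and your proposal must stand on its own.

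Your argument is correct. The crucial claim---that among the pipe dreams with a fixed reduced word $\alpha$, precisely the one with the termwise-maximal compatible sequence $\beta(\alpha)$ is quasiYamanouchi---checks out on both halves. For existence, if row $r$ of $(\alpha,\beta(\alpha))$ were bad, your ``raise the block'' move (increasing each $\gamma$-value in the $r$-block to $r{+}1$) preserves (R.1)--(R.4): the only nontrivial checks are (R.3), which holds because $c_p\ge 2$ forces $\alpha_i\ge r{+}1$ on the block, and (R.4) at the seam $p_m,q_1$, which holds because the absence of a row-$(r{+}1)$ cross in column $\ge c_p$ gives $\alpha_{q_1}\le\alpha_{p_m}$. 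For uniqueness, your choice of maximal $p$ with $\gamma_p<\beta(\alpha)_p$ forces $\gamma_{p+1}=\beta(\alpha)_{p+1}\ge\beta(\alpha)_p>\gamma_p$, so $p$ ends its block and sits in column $\ge 2$; then either $\gamma_{p+1}>\gamma_p{+}1$ (no row below at all), or $\gamma_{p+1}=\gamma_p{+}1$, in which case $\beta(\alpha)_p=\beta(\alpha)_{p+1}$ and (R.4) for $\beta(\alpha)$ gives $\alpha_{p+1}\le\alpha_p$, whence the rightmost cross in row $\gamma_p{+}1$ is strictly left of column $c_p$. Either way row $\gamma_p$ violates quasiYamanouchi.

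This is essentially the argument of the cited reference, phrased in the compatible-sequence language the paper has set up; the alternative ``drop'' approach you mention at the end is closer to how \cite{Assaf.Searles} actually organizes it (via a destandardization map on pipe dreams), but the content is the same.
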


Before continuing to our third combinatorial formula for Schubert polynomials, let us digress to consider another basis of $\poly_n$, closely related to the fundamental slide polynomials. Recall from Theorem~\ref{thm:slide_lift_fund} that the fundamental slide polynomials are a lift of a fundamental quasisymmetric polynomials. One might ask for an analogous lift to $\poly_n$ of the monomial quasisymmetric polynomials. These are provided by the {\bf monomial slide polynomials} of \cite{Assaf.Searles}, which we now discuss. 

Looking back at the combinatorial formulas for fundamental and monomial quasisymmetric polynomials, observe that they are identical, except that the formula for $\fundamental_\alpha$ looks at weak compositions $b$ with $b^+ \vDash \alpha$ while the formula for $M_\alpha$ looks at weak compositions $b$ with the more restrictive property $b^+  = \alpha$. It is easy then to guess the following modification of Theorem~\ref{thm:slide2X} that will yield the desired definition of monomial slide polynomials.

\begin{definition}\label{def:mslide2X}
For any weak composition $a$, the {\bf monomial slide polynomial} $\mslide_a$ is defined by
\[
\mslide_a = \sum_{\substack{ b \geq a \\ b^+ = a^+}} \monomial_b.
\]
\end{definition}

By triangularity, it is straightforward that monomial slide polynomials form a basis of $\poly_n$. Moreover, we have the following analogue of Theorem~\ref{thm:slide_lift_fund}:

\begin{theorem}[\cite{Assaf.Searles}]\label{thm:mslide_lift_mon}
We have $\mslide_a \in \qsym_n$ if and only if $a$ is of the form $0^k \alpha$, where $\alpha$ is a strong composition of length $n-k$.

Moreover, we have $\mslide_{0^k\alpha} = M_\alpha$. Thus, the monomial slide basis of $\poly_n$ is a lift of the monomial quasisymmetric polynomial basis of $\qsym_n$.
\end{theorem}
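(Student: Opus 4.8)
The plan is to prove the two assertions of Theorem~\ref{thm:mslide_lift_mon} in turn, closely paralleling the structure of the proof of Theorem~\ref{thm:slide_lift_fund} but with the coarser condition $b^+ = a^+$ in place of $b^+ \vDash a^+$.

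First I would establish that $\mslide_{0^k\alpha} = M_\alpha$ for any strong composition $\alpha$ of length $n-k$. By Definition~\ref{def:mslide2X}, $\mslide_{0^k\alpha} = \sum_{b \geq 0^k\alpha,\ b^+ = \alpha} \monomial_b$. On the other hand, $M_\alpha(x_1, \dots, x_n) = \sum_{b}\xx^b$, summed over weak compositions $b$ of length $n$ with $b^+ = \alpha$. So the desired identity reduces to the claim that, among length-$n$ weak compositions $b$ with $b^+ = \alpha$, the dominance condition $b \geq 0^k\alpha$ is automatic. This is a short combinatorial check: if $b^+ = \alpha$ with $\ell(\alpha) = n-k$, then $b$ has exactly $k$ zero entries, so for each $j$ the partial sum $\sum_{i=1}^j b_i$ is a sum of at least $\max(0, j-k)$ of the first entries of $\alpha$ in order, hence is at least $\alpha_1 + \cdots + \alpha_{j-k} = \sum_{i=1}^j (0^k\alpha)_i$; when $j \leq k$ the right-hand partial sum is $0$ and the inequality is trivial. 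Thus every term of $M_\alpha$ appears in $\mslide_{0^k\alpha}$ and conversely, giving equality.

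Next I would prove the ``if and only if'' characterization of when $\mslide_a \in \qsym_n$. The ``if'' direction is immediate from the first part, since every $M_\alpha$ is by definition quasisymmetric. For the ``only if'' direction, suppose $a$ is \emph{not} of the form $0^k a^+$, i.e., $a$ has a nonzero entry appearing strictly after some zero entry (equivalently, $a \neq 0^k a^+$ where $k$ is the number of zeros of $a$). I would argue that $\mslide_a$ then fails the defining quasisymmetry relation. The cleanest route is to use triangularity: $\mslide_a = \monomial_a + (\text{terms }\monomial_b\text{ with }b > a,\ b^+ = a^+)$, and every such $b$ also strictly dominates $a$, so in particular $\monomial_a$ is the unique dominance-minimal monomial appearing. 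If $\mslide_a$ were quasisymmetric, then since $a$ is not of the form $0^k a^+$, the weak composition $b$ obtained by moving the leftmost nonzero entry of $a$ that is preceded by a zero into that preceding zero slot (merging appropriately) would have $b^+ = a^+$, $\xx^b$ would be forced to appear in $\mslide_a$ by the quasisymmetry relations applied to $\xx^a$, yet $b$ and $a$ are incomparable in dominance or $b < a$ --- contradicting dominance-minimality of $a$. I would need to choose the modification carefully so that the resulting $b$ genuinely violates $b \geq a$ while still lying in the $\qsym_n$-orbit of $\xx^a$; the standard choice is to take the transposition swapping the offending nonzero entry past the zero directly to its left, which is a valid move for the $\qsym$ action precisely because the two variables are not both present in that monomial.

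The main obstacle I anticipate is the ``only if'' direction: one must show that the dominance condition in $\mslide_a$ is genuinely incompatible with full quasisymmetry whenever $a$ has a nonzero entry to the right of a zero. This is really a statement that the $\geq a$ constraint ``sees'' the position of zeros, whereas quasisymmetry is blind to inserting or deleting zeros; making that tension precise requires exhibiting a specific monomial that quasisymmetry demands but dominance forbids, and verifying the dominance failure. All the remaining steps --- the partial-sum estimate for the ``if'' direction, the triangularity observation, and the reduction to the case of a single offending zero/nonzero adjacency --- are routine once this key incompatibility is pinned down. Alternatively, one could deduce the characterization from Theorem~\ref{thm:slide_lift_fund} together with the fact that $\mslide_a$ refines $\slide_a$ (cf.\ Figure~\ref{fig:poly}) and that $M_\alpha$ refines $\fundamental_\alpha$ (Theorem~\ref{thm:F2M}), using that $\mslide_a \in \qsym_n$ would force $\slide_a \in \qsym_n$; I would mention this as a shortcut but expect the direct combinatorial argument above to be cleanest to write out.
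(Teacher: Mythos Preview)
The paper is a survey and states this theorem without proof (citing \cite{Assaf.Searles}), so there is no proof in the paper to compare against; I will simply assess correctness.

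Your argument for $\mslide_{0^k\alpha} = M_\alpha$ (and hence the ``if'' direction) is correct: the partial-sum estimate showing that every length-$n$ weak composition $b$ with $b^+=\alpha$ automatically satisfies $b \geq 0^k\alpha$ is exactly right.

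Your ``only if'' direction, however, has a genuine error. The characterization ``$a$ is not of the form $0^k a^+$, i.e., $a$ has a nonzero entry appearing strictly after some zero entry'' is backwards: the correct equivalent condition is that $a$ has a \emph{zero} entry appearing strictly after some \emph{nonzero} entry (equivalently, there exists $i$ with $a_i>0$ and $a_{i+1}=0$). For instance, $a=(1,2,0)$ is not of the required form, yet it has no nonzero entry preceded by a zero, so your construction does not even apply to it. Worse, when your construction \emph{does} apply, it goes the wrong way: swapping a nonzero entry with a zero immediately to its \emph{left} produces a $b$ with $b>a$ in dominance, so $\xx^b$ \emph{does} appear in $\mslide_a$ and you get no contradiction. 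The fix is to reverse the direction: choose $i$ with $a_i>0$ and $a_{i+1}=0$, and let $b$ be $a$ with positions $i,i+1$ swapped. Then $b^+=a^+$, the monomial $\xx^b$ lies in the quasisymmetric orbit of $\xx^a$, and $\sum_{j\le i} b_j < \sum_{j\le i} a_j$, so $b \not\geq a$ and $\xx^b$ is absent from $\mslide_a$---the desired contradiction.

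Your proposed shortcut via Theorem~\ref{thm:slide_lift_fund} also does not work as stated: $\slide_a$ is a sum of several $\mslide_b$'s, so $\mslide_a \in \qsym_n$ does not by itself force $\slide_a \in \qsym_n$.
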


Just as the combinatorial multiplication rule of Theorem~\ref{thm:mult_F} for fundamental quasisymmetric polynomials lifts to that of Theorem~\ref{thm:mult_slide} for fundamental slide polynomials, the combinatorial multiplication rule of Theorem~\ref{thm:mult_M} for monomial quasisymmetric polynomials lifts to a rule for monomial slide polynomials.

First, we need to extend the overlapping shuffle product of Section~\ref{sec:qsym} from strong compositions to general weak compositions. Given weak compositions $a$ and $b$, treat them as words of of some common finite length $n$ by truncating at some position past all their nonzero entries. By $a+b$ we mean the weak composition that is the coordinatewise sum of $a$ and $b$. Consider the set $S(a,b)$ of all pairs $(a',b')$ of weak compositions of equal length $k \leq n$ such that 
\begin{itemize}
\item $(a')^+ = a^+$ and $(b')^+ = b^+$;
\item $a' \geq a$ and $b' \geq b$; and
\item for all $1 \leq i \leq k$, we have $a'_i + b'_i > 0$.
\end{itemize}

Fix $(a',b') \in S(a,b)$. Let $c$ be a weak composition of length $r$ with zeros in positions $s_1, \ldots , s_m$ such that $c^+ = a'+b'$. Define $c_a$ to be the weak composition of length $r$ having zeros in the same positions $s_1, \ldots , s_m$ and the remaining positions of $c_a$ are the entries of $a'$, in order from left to right. Define $c_b$ similarly, using the entries of $b'$. Then we have
\begin{itemize}
\item $c = c_a + c_b$, and
\item $(c_a)^+ = (a')^+$ and $(c_b)^+ = (b')^+$.
\end{itemize}
For each such $(a',b') \in S(a,b)$, let $\Bump(a',b')$ denote the unique dominance-least weak composition satisfying
\begin{itemize}
\item $\Bump(a',b')^+ = a' + b'$, and
\item $\Bump(a',b')_a \geq a$ and $\Bump(a',b')_b \geq b$.
\end{itemize}
The {\bf overlapping slide product} of $a$ and $b$ is then the formal sum $a \shuffle_o b$ of the $\Bump(a',b')$ for all $(a',b') \in S(a,b)$.

\begin{example}\label{ex:mult_mslide}
Let $a = (0,1,0,2)$ and $b = (1,0,0,1)$, as in Example~\ref{ex:mult_slide}. Then $S(a,b)$ consists of the seven pairs
\begin{align*}
&\big( (0,1,0,2), (1,0,1,0) \big), \big( (0,1,2,0), (1,0,0,1) \big),  \big( (0,1,2), (1,0,1) \big), \big( (0,1,2), (1,1,0) \big)
\\
&\big( (1,0,2), (1,1,0) \big), \big( (1,2,0), (1,0,1) \big),  \big( (1,2), (1,1) \big)
\end{align*}
The seven corresponding weak compositions $\Bump(a',b')$ are
\begin{align*}
& (1,1,1,2), (1,1,2,1), (1,1,0,3), (1,2,0,2), (2,0,1,2), (2,0,2,1), (2,0,0,3)
\end{align*}
\end{example}

\begin{theorem}[\cite{Assaf.Searles}]\label{thm:mult_mslide}
For weak compositions $a$ and $b$, we have 
\[
\mslide_a \cdot \mslide_b = \sum_c C_{a, b}^c \mslide_c,
\]
where $C_{a, b}^c$ is the multiplicity of $c$ in the overlapping slide product $a \shuffle_o b$.
\end{theorem}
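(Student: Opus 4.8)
The plan is to prove the multiplication rule by a direct monomial-by-monomial bookkeeping argument, using Definition~\ref{def:mslide2X} to expand both sides into the monomial basis $\{\monomial_c\}$ and checking that the coefficients agree. By Definition~\ref{def:mslide2X}, the left-hand side is
\[
\mslide_a \cdot \mslide_b = \sum_{\substack{d \geq a,\ d^+ = a^+ \\ e \geq b,\ e^+ = b^+}} \monomial_{d} \cdot \monomial_{e} = \sum_{\substack{d \geq a,\ d^+ = a^+ \\ e \geq b,\ e^+ = b^+}} \monomial_{d+e},
\]
where we silently pad $d$ and $e$ to a common length. So the coefficient of $\monomial_c$ in $\mslide_a \cdot \mslide_b$ is exactly the number of pairs $(d,e)$ with $d+e = c$, $d \geq a$, $e \geq b$, $d^+ = a^+$, and $e^+ = b^+$. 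On the right-hand side, expanding each $\mslide_c$ via Definition~\ref{def:mslide2X} shows that the coefficient of $\monomial_{c'}$ in $\sum_c C_{a,b}^c \mslide_c$ is $\sum_{c \leq c',\ c^+ = (c')^+} C_{a,b}^c$. The proof will consist of identifying these two counts.

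First I would set up the correspondence between the two indexing sets. Given a pair $(d,e)$ contributing to the left-hand side with $d+e = c'$, let $(a',b')$ be obtained from $(d,e)$ by deleting all coordinate positions where $d_i + e_i = 0$; one checks $(a',b') \in S(a,b)$, since deleting these joint-zero positions preserves positive parts and can only make the dominance conditions $d \geq a$, $e \geq b$ easier or equal (here one needs that $a$ and $b$ themselves have no ``wasted'' leading structure — more precisely, that a joint zero of $d,e$ forces the corresponding prefix sums to already dominate, which follows from $d \ge a$ and $e \ge b$ coordinatewise on the zero block). Conversely, each $(a',b') \in S(a,b)$ together with a choice of where to re-insert the joint-zero positions recovers a pair $(d,e)$. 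The key point is then to show that, for fixed $(a',b')$, as we range over all legal re-insertions of zeros, the resulting weak compositions $c' = d+e$ are exactly those $c'$ with $(c')^+ = a'+b'$ and $c' \geq \Bump(a',b')$ — and that each such $c'$ is hit exactly once. This is precisely the content of the definition of $\Bump(a',b')$ as the unique dominance-least weak composition with positive part $a'+b'$ whose ``$a$-part'' and ``$b$-part'' dominate $a$ and $b$ respectively: inserting zeros only moves mass to the right, i.e.\ produces dominance-smaller or equal compositions among those with the correct positive part, so the reachable set is the up-set of $\Bump(a',b')$ inside the fiber over $a'+b'$.

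The main obstacle I anticipate is the uniqueness/well-definedness of $\Bump(a',b')$ and the claim that the re-insertions of zeros biject cleanly with the dominance up-set of $\Bump(a',b')$ in the appropriate fiber. Concretely: one must verify that the set of weak compositions $c'$ with $(c')^+ = a'+b'$ that arise from some zero-insertion is closed upward under dominance (within that fiber) and has a unique minimum; this requires a small lemma that, given two weak compositions with the same positive part, one dominates the other iff the latter is obtained from the former by a sequence of elementary moves shifting a block of values rightward past a zero. Granting that lemma, the counts on both sides match termwise in $c'$, because the left-hand count splits as $\sum_{(a',b') \in S(a,b)} \#\{\text{zero-insertions giving } c'\}$ and the right-hand count splits as $\sum_{(a',b') \in S(a,b)} [\,c' \geq \Bump(a',b'),\ (c')^+ = a'+b'\,]$, and these summands agree by the preceding paragraph. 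Everything else — compatibility of the length-padding conventions, the observation $(c_a)^+ = (a')^+$, and that distinct $(a',b')$ with the same $\Bump$ value cannot arise (they have distinct positive parts $a'+b'$, hence contribute to disjoint fibers) — is routine bookkeeping, parallel to the proof of Theorem~\ref{thm:mult_M} for $M_\alpha$ via overlapping shuffles, of which this is the dominance-refined lift.
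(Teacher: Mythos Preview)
The survey does not include a proof of this theorem; it is stated with a citation to \cite{Assaf.Searles}. So there is no in-paper argument to compare against.

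Your approach is correct and is the natural direct proof: expand both sides in the $\{\monomial_c\}$ basis and match coefficients via the bijection $(d,e) \leftrightarrow \big((a',b'),\, c'\big)$ that you describe. The key identity you need, that for fixed $(a',b') \in S(a,b)$ the set
\[
\{c' : (c')^+ = a'+b',\ (c')_a \geq a,\ (c')_b \geq b\}
\]
coincides with $\{c' : (c')^+ = a'+b',\ c' \geq \Bump(a',b')\}$, follows cleanly once you note two things. First, the containment $\subseteq$ is immediate from $\Bump(a',b')$ being defined as the dominance-least element of the left-hand set, so no separate existence or uniqueness argument is needed for the theorem itself. Second, the containment $\supseteq$ is exactly the upward-closedness lemma you isolate: if $c' \geq c''$ in the fiber over $a'+b'$, then the nonzero positions $p_1 < \cdots < p_k$ of $c'$ and $q_1 < \cdots < q_k$ of $c''$ satisfy $p_j \leq q_j$ for all $j$ (since each entry of $a'+b'$ is strictly positive), whence $(c')_a \geq (c'')_a$ and $(c')_b \geq (c'')_b$ by comparing prefix sums.

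One small correction: your parenthetical that distinct $(a',b') \in S(a,b)$ must have distinct sums $a'+b'$ is false; for instance $((1,0),(0,1))$ and $((0,1),(1,0))$ can both lie in $S(a,b)$ with the same sum $(1,1)$. Fortunately this claim plays no role in your argument: the bijection already matches each valid $(d,e)$ with a unique $(a',b')$, and the coefficient comparison goes through summand-by-summand over $S(a,b)$ regardless of whether different pairs share a sum or a $\Bump$ value.
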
 

The fundamental slide polynomials expand positively in the monomial slide basis. Say that $b\unrhd a$ if $b\ge a$, and $c\ge b$ whenever $c\ge a$ and $c^+=b^+$. 

\begin{theorem}\cite{Assaf.Searles}
The fundamental slide polynomials expand positively in the monomial slide polynomials:
\[\slide_a = \sum_{\substack{b\unrhd a \\ b^+ \vDash a^+}} \mslide_b.\]
\end{theorem}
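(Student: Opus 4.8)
The plan is to prove the expansion $\slide_a = \sum_{b \unrhd a,\, b^+ \vDash a^+} \mslide_b$ by the standard device of expanding both sides in the monomial basis $\{\monomial_c\}$ and comparing coefficients. By Theorem~\ref{thm:slide2X}, the left-hand side is $\sum_{c \geq a,\, c^+ \vDash a^+} \monomial_c$. By Definition~\ref{def:mslide2X}, each $\mslide_b$ on the right-hand side equals $\sum_{c \geq b,\, c^+ = b^+} \monomial_c$, so the right-hand side becomes $\sum_{b \unrhd a,\, b^+ \vDash a^+} \; \sum_{c \geq b,\, c^+ = b^+} \monomial_c$. So the content of the theorem reduces to the purely combinatorial claim that, for every weak composition $c$, we have $\monomial_c$ appearing on the left if and only if there is a \emph{unique} $b$ with $b \unrhd a$, $b^+ \vDash a^+$, $c \geq b$, and $c^+ = b^+$; in other words, the index set $\{c : c \geq a,\ c^+ \vDash a^+\}$ is partitioned by the fibers of a well-defined map $c \mapsto b(c)$ landing in $\{b : b \unrhd a,\ b^+ \vDash a^+\}$.

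The natural candidate for $b(c)$ is the following: among all weak compositions $b'$ with $b' \geq a$, $(b')^+ = c^+$, and $c \geq b'$, take the dominance-least one. First I would check this set is nonempty when $c \geq a$ and $c^+ \vDash a^+$ (the composition $c$ itself witnesses membership once we note $c^+ = c^+$ and we must verify $c \geq a$ and that $c^+ \vDash a^+ \vDash a^+$, so in fact $b' = c$ qualifies only if additionally $c^+ = c^+$, which is automatic — the subtlety is whether a \emph{smaller} $b'$ exists with the right positive part), and then verify it has a dominance-minimum. The key structural fact I would isolate is a lemma: given $c \geq a$ with $c^+ \vDash a^+$, the dominance-minimal $b'$ satisfying $a \leq b' \leq c$ and $(b')^+ = c^+$ exists, is unique, and satisfies $b' \unrhd a$ and $(b')^+ \vDash a^+$; conversely every $b$ with $b \unrhd a$, $b^+ \vDash a^+$ arises this way from at least one such $c$ (e.g. from $c = b$ itself), and two compositions $c_1, c_2$ in the left index set produce the same $b$ precisely when each lies in the interval $[b, \cdot]$ with positive part $b^+$ — which is exactly the condition $c_i \geq b$ and $c_i^+ = b^+$ defining the $\monomial$-support of $\mslide_b$. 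Assembling these gives the bijection of indexing sets and hence equality of the two monomial expansions.

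The main obstacle I expect is the well-definedness and uniqueness of $b(c)$ — specifically, showing that the set of $b'$ with $a \leq b' \leq c$ and $(b')^+ = c^+$ is closed under the operation "componentwise meet adjusted to keep the positive part fixed," so that it really has a dominance-least element, and that this least element satisfies the slightly delicate condition $b' \unrhd a$ (that is, any $c' \geq a$ with $(c')^+ = (b')^+$ dominates $b'$). This is where the definition of $\unrhd$ is doing real work: it is precisely the condition that makes the fibers of $c \mapsto b(c)$ match the $\monomial$-supports of the $\mslide_b$, so I would verify it by arguing that if some $c' \geq a$ had $(c')^+ = (b')^+$ but $c' \not\geq b'$, one could build a strictly dominance-smaller competitor to $b'$ inside the interval $[a,c]$, contradicting minimality — this reduction, moving "mass" leftward in $c$ while preserving the positive part and staying above $a$, is the one genuinely fiddly combinatorial argument. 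Everything else (the bookkeeping that $b^+ \vDash a^+$ is preserved, that $c = b$ recovers $b$, that distinct $b$ give disjoint supports) is routine manipulation of dominance order and positive parts, closely parallel to the proof of Theorem~\ref{thm:slide2X} and to the analogous quasisymmetric statement Theorem~\ref{thm:F2M} combined with Theorem~\ref{thm:mslide_lift_mon}.
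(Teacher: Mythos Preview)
The paper is a survey and does not itself supply a proof of this theorem; it merely cites \cite{Assaf.Searles}. So there is no ``paper's own proof'' to compare against, and I can only assess the correctness of your outline.

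Your plan is sound and is essentially the natural argument. A cleaner way to phrase it is to observe directly from the definition of $\unrhd$ that, for a fixed strong composition $\beta$, a weak composition $b$ with $b^+=\beta$ satisfies $b\unrhd a$ if and only if $b$ is the dominance-minimum of the set $S_\beta=\{b':(b')^+=\beta,\ b'\geq a\}$. Thus, once you know $S_\beta$ has a unique minimum whenever it is nonempty, the map $c\mapsto b(c)=\min S_{c^+}$ is well-defined on the monomial support of $\slide_a$, and the three verifications (that $b(c)\unrhd a$, that $c\geq b(c)$ with $c^+=b(c)^+$, and that the fibers are exactly the monomial supports of the $\mslide_{b}$) follow immediately from this characterization---no need to carry the extra constraint $c\geq b'$ in your definition of the candidate, since it is automatic once $b'$ is the minimum.

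The one genuine technical point you correctly isolate is the existence of that minimum. Here your instinct about a meet is right, but you should use the \emph{dominance} meet rather than a ``componentwise meet adjusted to keep the positive part fixed.'' The key lemma is that if $(b')^+=(b'')^+=\beta$, then the dominance meet $b'\wedge b''$ (whose partial sums are the pointwise minima of the partial sums of $b'$ and $b''$) again has positive part $\beta$: the partial sums of $b'$ and of $b''$ each step through every value in $\{0,\beta_1,\beta_1+\beta_2,\ldots,|\beta|\}$, incrementing by at most one step at a time, and a short argument shows the same holds for their minimum. Since $S_\beta$ is visibly closed under dominance meet and finite, it has a unique minimum when nonempty, which finishes the argument.
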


A third combinatorial formula for Schubert polynomials comes from a model introduced (conjecturally) by Axel Kohnert \cite{Kohnert}. Let $D$ be a box diagram, i.e., any subset of the boxes in an $n\times n$ grid. A {\bf Kohnert move} on $D$ selects the rightmost box in some row and moves it to the first available empty space above it in the same column (if such an empty space exists). Let $\KD(D)$ denote the set of all box diagrams that can be obtained from $D$ by some sequence (possibly empty) of Kohnert moves. Define the weight $\wt(D)$ of a box diagram to be the weak composition where $\wt(D)_i$ records the number of boxes in the $i$th row of $D$ from the top.

\begin{example}
Let $D$ be the leftmost diagram in the top row of Figure~\ref{fig:KD} (which happens to be the Rothe diagram $RD(15324)$). The set of diagrams in Figure~\ref{fig:KD} is exactly $\KD(D)$. 
\end{example}
\begin{figure}[h]
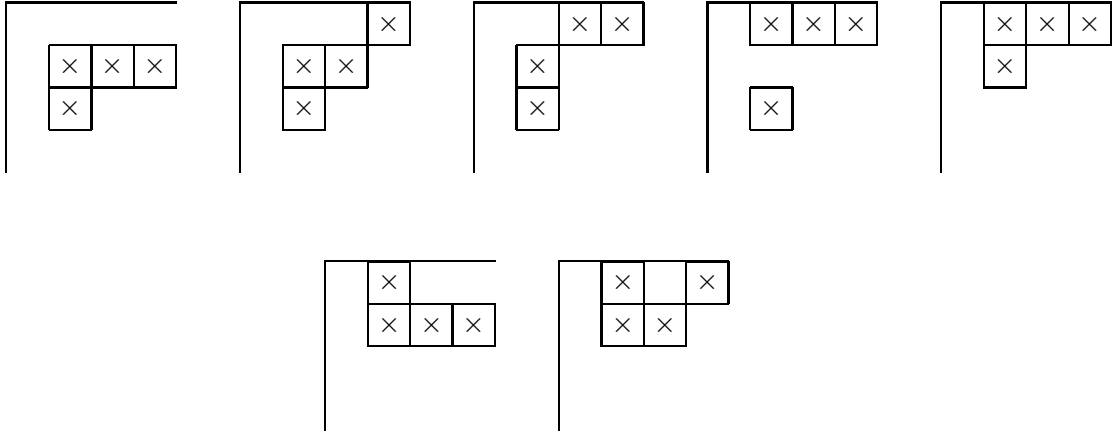

\begin{displaymath}
\vline \tableau{ \hline \\  & \times & \times & \times \\ & \times \\ \\ }  \hspace{1.5\cellsize} 
\vline \tableau{ \hline  & & & \times \\  & \times & \times  \\ & \times \\ \\ }  \hspace{1.5\cellsize} 
\vline \tableau{ \hline & & \times & \times \\  & \times   \\ & \times \\ \\ }  \hspace{1.5\cellsize} 
\vline \tableau{ \hline & \times & \times & \times \\ \\ & \times \\ \\ }  \hspace{1.5\cellsize}
\vline \tableau{ \hline & \times & \times & \times \\ & \times \\  \\ \\ }  
\end{displaymath}
\vspace{\baselineskip}
\begin{displaymath}
\vline \tableau{ \hline & \times & &  \\  & \times & \times & \times \\ \\ \\ }  \hspace{1.5\cellsize}
\vline \tableau{ \hline & \times & & \times  \\  & \times & \times  \\ \\ \\ }  \hspace{1.5\cellsize} 
\end{displaymath}
\caption{The $7$ Kohnert diagrams associated to $RD(15324)$.}\label{fig:KD}
\end{figure}

\begin{theorem}\cite{Win99,Win02,Assaf:comb}\label{thm:Kohnert_formula_for_Schubs}
The Schubert polynomial $\schub_\pi$ is the generating function of the Kohnert diagrams for the Rothe diagram of $\pi$, i.e.,
\[\schub_\pi = \sum_{D\in \KD(\RD(\pi))}\xx^{\wt(D)}.\]
\end{theorem}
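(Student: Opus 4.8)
The plan is to show that the Kohnert generating functions $K_\pi \coloneqq \sum_{D\in\KD(\RD(\pi))}\xx^{\wt(D)}$ satisfy the two properties that uniquely characterize Schubert polynomials: namely $K_{w_0}=\monomial_{(n-1,\dots,1,0)}$, and $K_\pi=\partial_i K_{\pi s_i}$ whenever $\pi(i)<\pi(i+1)$. Since Kohnert moves preserve the number of boxes, every monomial appearing in $K_\pi$ has degree $|\RD(\pi)|=\ell(\pi)=\deg\schub_\pi$, so degrees match automatically, and the theorem then follows by induction on $\binom{n}{2}-\ell(\pi)$, the base case being $\pi=w_0$. That base case is immediate: $\RD(w_0)$ is the staircase Young diagram, and no Kohnert move applies to the Young diagram of a partition (the column directly above the rightmost box of each row is already full down to that box), so $\KD(\RD(w_0))=\{\RD(w_0)\}$ and $K_{w_0}=\monomial_{(n-1,\dots,1,0)}=\schub_{w_0}$.

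For the recursive step, fix $\pi$ with $\pi(i)<\pi(i+1)$ and $\ell(\pi s_i)=\ell(\pi)+1$. First I would record the (elementary) effect of $\pi\mapsto\pi s_i$ on Rothe diagrams: the two diagrams agree outside rows $i$ and $i+1$, while inside those two rows the change is explicit — one box is lost overall, and the two rows are partially interchanged, the content of row $i$ becoming the former content of row $i+1$ and the content of row $i+1$ becoming the former content of row $i$ with one box deleted. Next I would use the standard combinatorial description of the divided difference: on a monomial with $x_i$-exponent $a$ and $x_{i+1}$-exponent $b$ (and arbitrary exponents elsewhere), $\partial_i$ returns $0$ if $a=b$, and otherwise a signed ``ladder'' of $|a-b|$ monomials of degree one lower, interpolating at constant total between the two extreme rearrangements $x_i^{a-1}x_{i+1}^{b}$ and $x_i^{b}x_{i+1}^{a-1}$ of the relevant powers. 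The heart of the argument is then a \emph{local} claim: the Kohnert diagrams of $\RD(\pi)$ are obtained from those of $\RD(\pi s_i)$ by performing, inside rows $i$ and $i+1$ only, exactly the box rearrangement that mirrors this ladder, while the restriction to every other row is carried along unchanged. Granting it, summing over $\KD(\RD(\pi s_i))$ yields $K_\pi=\partial_i K_{\pi s_i}$.

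The hard part will be precisely this local claim. Kohnert moves are not confined to any two rows — a box may climb an entire column through many intermediate rows — so one must prove that the Kohnert poset of $\RD(\pi s_i)$ ``factors'' compatibly with the distinguished pair of rows $i,i+1$, and that the number of Kohnert diagrams lying over a fixed configuration in the remaining rows equals the length of the corresponding ladder (with the sign bookkeeping forced by $\partial_i$ when the two rows are ``out of order''). This is the technical core carried out in \cite{Win99,Win02,Assaf:comb}; Winkel's contribution includes the nontrivial point that Kohnert's rule is even well defined for the ``rightmost box'' choice made in the statement.

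A cleaner alternative, sidestepping divided differences entirely, would be to use the already-established pipe dream formula $\schub_\pi=\sum_{P\in\PD(\pi)}\xx^{\wt(P)}$ and construct a weight-preserving bijection $\PD(\pi)\leftrightarrow\KD(\RD(\pi))$. This is appealing but not routine: reduced pipe dreams live in the northwest staircase, whereas $\RD(\pi)$ need not be northwest-justified, so the bijection cannot be position-by-position and must instead be built recursively (on, say, the first column or the first descent of $\pi$). Either route reaches the theorem; I would attempt the inductive one first, isolating the local commutation with $\partial_i$ as the single lemma that requires real work.
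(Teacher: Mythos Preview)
The paper does not prove this theorem at all: it is a survey, and Theorem~\ref{thm:Kohnert_formula_for_Schubs} is simply quoted with external citations. So there is no ``paper's own proof'' to compare against, and your proposal must be judged on its own.

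As a proof \emph{plan} your outline is reasonable and you have correctly located the difficulty, but as a proof it has a genuine gap. The divided-difference route you lead with is precisely the historically contested one: Winkel's arguments in \cite{Win99,Win02} attempt exactly this recursion, and for years the community did not accept them as complete, because the ``local claim'' you isolate---that the Kohnert poset of $\RD(\pi s_i)$ factors through rows $i,i+1$ so as to reproduce the $\partial_i$-ladder---is essentially the whole theorem. You have stated what must be shown but supplied no mechanism. Kohnert moves are highly nonlocal (a box may traverse many rows, and which box is ``rightmost'' in a row changes as the diagram evolves), and there is no evident sign-reversing involution on $\KD(\RD(\pi s_i))$ that would cancel the negative terms coming from $\partial_i$; indeed every Kohnert diagram contributes with coefficient $+1$, so the cancellation must be engineered entirely inside the ladder bookkeeping. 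Your description of how rows $i,i+1$ of the Rothe diagram change under $\pi\mapsto\pi s_i$ is also slightly off (the two rows are not simply swapped with one box deleted; columns between $\pi(i)$ and $\pi(i+1)$ behave differently from the others), which suggests the ``explicit'' local picture would need to be redone before the induction can even be set up.

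Your alternative---a weight-preserving bijection $\PD(\pi)\leftrightarrow\KD(\RD(\pi))$---is closer in spirit to the argument that is now regarded as a complete proof (Assaf \cite{Assaf:comb} works through an intermediate labeled object rather than directly, but the strategy is bijective rather than via $\partial_i$). If you want an argument that can actually be completed, I would abandon the divided-difference induction and pursue this route, being explicit about the recursive construction you allude to.
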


\begin{example}
We have $\schub_{15324} = \xx^{031} + \xx^{121} +\xx^{211} +\xx^{310} +\xx^{310} +\xx^{130} +\xx^{220}$, 
where the monomials are determined by the Kohnert diagrams shown in Figure~\ref{fig:KD}. Note this is consistent with the computation in Example~\ref{ex:pipes}.
\end{example}

Kohnert diagrams also yield another natural basis of $\asym_n$, which we now consider. Given a weak composition $a$, we can associate a box diagram $D$ by first obtaining the permutation $w$ corresponding to $a$ and then taking $D = \RD(w)$. This construction, combined with the Kohnert moves, leads us to the characterization of Schubert polynomials from Theorem~\ref{thm:Kohnert_formula_for_Schubs}. However, there is also a much easier way to associate a box diagram $D(a)$ to a weak composition $a$--namely, just take $D(a)$ to be the Young diagram of $a$, as described in Section~\ref{sec:qsym}. This leads us to the following definition (really a theorem of Kohnert \cite{Kohnert}): For a weak composition $a$, the {\bf key polynomial} $\key_a$ is the generating function
\[\key_a \coloneqq \sum_{D\in KD(D(a))}\xx^{\wt(D)}.\]

\begin{example}\label{ex:KDkey}
Let $a=021$. Then $\key_a =  \xx^{021} + \xx^{111} + \xx^{201} + \xx^{210} + \xx^{120}$, as computed by the Kohnert diagrams in Figure~\ref{fig:KDkey}.
\end{example}
\begin{figure}[h]
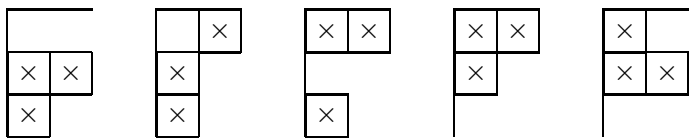

\begin{displaymath}
\vline \tableau{ \hline \\ \times & \times  \\ \times  \\ }  \hspace{1.5\cellsize} 
\vline \tableau{ \hline & \times \\ \times   \\ \times  \\ }  \hspace{1.5\cellsize} 
\vline \tableau{ \hline \times & \times \\   \\ \times  \\ }  \hspace{1.5\cellsize} 
\vline \tableau{ \hline \times & \times   \\ \times \\ \\ }  \hspace{1.5\cellsize} 
\vline \tableau{ \hline \times \\ \times & \times \\ \\ }
\end{displaymath}
\caption{The 5 Kohnert diagrams associated to the weak composition $021$.}\label{fig:KDkey}
\end{figure}

The definition of key polynomials that we have given here is not the original one. These polynomials were first introduced in \cite{Demazure} where they were realized as characters of (type A) Demazure modules; for this reason, they are often referred to as \emph{Demazure characters}. Later they were studied from a more combinatorial perspective by Lascoux and Sch\"utzenberger \cite{Lascoux.Schutzenberger:key}, who coined the term `key polynomial'. Key polynomials also arise as a specialization \cite{Sanderson, Ion} of the nonsymmetric Macdonald polynomials introduced in \cite{Opdam, Macdonald96, Cherednik}.

An alternative description of key polynomials is via a modification of the $\partial_i$ operators that define Schubert polynomials. For each positive integer, define an operator $\pi_i$ on $\poly_n$ by
\[
\pi_i(f) \coloneqq \partial_i(x_i f).
\]
Let $w$ be a permutation and let $s_{i_1}\cdots s_{i_r}$ a any reduced word for $w$. Then define $\pi_w = \pi_{i_1}\cdots \pi_{i_r}$. This is independent of the choice of reduced word since these operators satisfy $\pi_i^2=\pi_i$ and the usual commutation and braid relations for the symmetric group. (That is to say, the action of the $\pi_i$ operators on $\poly_n$ is a representation of the type A $0$-Hecke algebra.) Let $s_i$ act on a weak composition $a$ by exchanging the $i$th and $(i+1)$st entries of $a$. Given a weak composition $a$, let $w(a)$ denote the permutation of minimal Coxeter length such that $w(a)\cdot a = \sort{a}$. Finally, the key polynomial $\key_a$ is given by
\[\key_a = \pi_{w(a)} \xx^{\sort{a}}.\]

\begin{example}
\begin{align*}
\key_{021} & = \pi_1\pi_2(x_1^2x_2) \\
		& = \pi_1(x_1^2x_2+x_1^2x_3) \\
		& = x_1^2x_2+x_1x_2^2 + x_1^2x_3 + x_1x_2x_3 + x_2^2x_3.
\end{align*}
Compare this calculation with that of Example~\ref{ex:KDkey}.
\end{example}

As is the case for Schubert polynomials, there are several additional combinatorial formulas for key polynomials! An excellent overview can be found in \cite{Reiner.Shimozono}.
Another such combinatorial formula for the key polynomials is given in \cite{Haglund.Luoto.Mason.vanWilligenburg:refinement} in terms of \emph{semi-skyline fillings}. Let $a$ be a weak composition of length $n$.  We recall the definition of a triple of entries from the previous section; this extends verbatim from diagrams of compositions to diagrams of weak compositions.

A \emph{semi-skyline filling} of $D(a)$ is a filling of the boxes of $D(a)$ with positive integers, one per box, such that
\begin{enumerate}
\item[(S.1)] entries do not repeat in a column
\item[(S.2)] entries weakly decrease from left to right along rows
\item[(S.3)] every triple of entries is inversion.
\end{enumerate}
The weight $\wt(T)$ of a semi-skyline filling is weak composition whose $i$th entry records the number of occurrences of the entry $i$ in $T$.

Let $\overline{D}(a)$ denote the diagram of $D(a)$ augmented with an additional $0$th column called a \emph{basement}, and let $\rev(a)$ denote the weak composition obtained by reading the entries of $a$ in reverse. Define a \emph{key semi-skyline filling} for $a$ to be a filling of $\overline{D}(\rev(a))$, where the $i$th basement entry is $n+1-i$, satisfying (S.1), (S.2) and (S.3) above (including on basement entries). Let $\KSSF(a)$ denote the set of key semi-skyline fillings for $a$.

\begin{example}\label{ex:basement}
Let $a = (0,2,1)$.  The key semi-skyline fillings associated to $a$ are shown in Figure~\ref{fig:basement}. The basement boxes are shaded in grey.
\end{example}
\begin{figure}[ht]
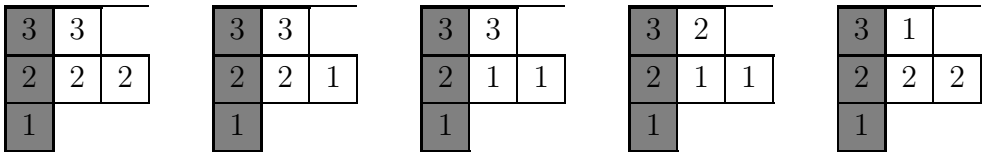

\begin{displaymath}
 \begin{ytableau} \hline
*(gray) 3 & 3 \\
*(gray) 2 & 2 & 2 \\
*(gray) 1 
\end{ytableau}  \hspace{1.5\cellsize} 
 \begin{ytableau} \hline
*(gray) 3 & 3 \\
*(gray) 2 & 2 & 1 \\
*(gray) 1 
\end{ytableau}  \hspace{1.5\cellsize} 
 \begin{ytableau} \hline
*(gray) 3 & 3 \\
*(gray) 2 & 1 & 1 \\
*(gray) 1 
\end{ytableau}  \hspace{1.5\cellsize} 
 \begin{ytableau} \hline
*(gray) 3 & 2 \\
*(gray) 2 & 1 & 1 \\
*(gray) 1 
\end{ytableau}  \hspace{1.5\cellsize} 
 \begin{ytableau} \hline
*(gray) 3 & 1 \\
*(gray) 2 & 2 & 2 \\
*(gray) 1 
\end{ytableau} 
\end{displaymath}
\caption{The five key semi-skyline fillings associated to $021$.}\label{fig:basement}
\end{figure}

\begin{theorem}\cite{Haglund.Luoto.Mason.vanWilligenburg:refinement}
The key polynomial $\key_a$ is given by
\[\key_a = \sum_{T\in \KSSF(a)}\xx^{\wt(T)}.\]
\end{theorem}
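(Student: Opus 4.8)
The plan is to prove equality of the two polynomial expressions $\key_a = \sum_{D \in KD(D(a))}\xx^{\wt(D)}$ (the Kohnert-diagram definition we have adopted) and $\sum_{T \in \KSSF(a)}\xx^{\wt(T)}$ by exhibiting a weight-preserving bijection
\[
\Phi : KD(D(a)) \longrightarrow \KSSF(a).
\]
Since both sides are generating functions over the relevant combinatorial sets graded by the statistic $\wt$, producing such a bijection immediately yields the monomial-by-monomial equality. First I would recall that $KD(D(a))$ consists of box diagrams obtained by iterating Kohnert moves starting from the (left-justified) Young diagram of $a$, while $\KSSF(a)$ consists of fillings of the flipped basement diagram $\overline{D}(\rev(a))$ subject to the column-distinctness, weak row-decrease, and inversion-triple conditions (S.1)--(S.3). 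The key observation is that a Kohnert diagram $D$ records, for each column $c$, the set of rows occupied by boxes of $D$ in that column; the bijection will translate this ``set of occupied rows in each column'' data into a semistandard-type filling by using the basement entries $n+1-i$ to force a canonical assignment of values to boxes.

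The main steps, in order, are: (1) For $D \in KD(D(a))$, define $\Phi(D)$ column by column, from the leftmost (basement-adjacent) column outward, assigning to each box in a column the unique value compatible with (S.1)--(S.3) given the already-filled basement and previous columns — concretely, in each column the boxes get distinct values, and the inversion-triple condition together with the decreasing-row condition pins down a unique legal filling once the \emph{set} of row-positions of boxes is known. (2) Verify $\Phi(D) \in \KSSF(a)$: this requires checking (S.1)--(S.3) hold, which should follow because Kohnert moves never create two boxes in the same cell (so the row-sets are well-defined) and the way boxes ``slide up'' into first-available positions is exactly what prevents triple violations. (3) Verify $\wt(\Phi(D)) = \wt(D)$: the number of boxes in row $i$ of $D$ equals the number of occurrences of the value $i$ in $\Phi(D)$ — this is where the choice of basement entry $n+1-i$ in row $i$ and the reversal $\rev(a)$ conspire correctly, so the bookkeeping must be done carefully. (4) Construct the inverse: given $T \in \KSSF(a)$, read off for each column the set of rows in which a given value appears; show this data arises from a unique Kohnert diagram, and that the semistandard conditions guarantee reachability from $D(a)$ by Kohnert moves.

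The hard part will be step (4) — showing that every key semi-skyline filling actually \emph{comes from} a Kohnert diagram reachable from $D(a)$, i.e., that the row-set data extracted from an arbitrary $T \in \KSSF(a)$ is not just \emph{some} box diagram but one lying in the Kohnert closure $KD(D(a))$. Equivalently one must show that the inversion-triple and column-distinctness conditions precisely characterize the reachable diagrams; this is the content where Kohnert's original (conjectural) description meets the Mason-type semi-skyline description, and I expect the cleanest route is to avoid a direct combinatorial reachability argument and instead invoke that \emph{both} generating functions are independently known to equal $\key_a$ via the $\pi_{w(a)}$ operator description given earlier — proving each side satisfies the same Demazure recurrence $\key_a = \pi_{w(a)}\xx^{\sort a}$ and agrees in the dominant (partition) case $a = \sort a$, where both sets reduce to the single diagram/filling $D(\lambda)$ with its unique legal filling. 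If one prefers a self-contained bijective proof, the obstacle is exactly establishing a Kohnert-move-to-filling compatibility lemma: each Kohnert move corresponds, under $\Phi$, to a specific local modification of the filling that preserves (S.1)--(S.3), and conversely the poset of $\KSSF$-fillings under the induced moves has a unique maximal/minimal element matching $D(a)$; proving this local compatibility is the technical crux, after which surjectivity and injectivity of $\Phi$ are routine.
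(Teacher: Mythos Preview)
The paper does not prove this theorem; it is stated with a citation to \cite{Haglund.Luoto.Mason.vanWilligenburg:refinement} and followed immediately by an example, so there is no argument in the paper to compare against.

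On the merits of your proposal: the bijective strategy is the right one, and is essentially the Kohnert-tableaux construction the paper alludes to two paragraphs later (citing \cite{Assaf.Searles:2}), where it is remarked that ``the Kohnert tableaux are equivalent to the key semi-skyline fillings turned upside down''. But your map $\Phi$ is not actually defined: saying ``assign to each box the unique value compatible with (S.1)--(S.3) given the previous columns'' presupposes existence and uniqueness of such a value, which you do not argue, and in fact the correct recipe is to label each box of the Kohnert diagram by the row of $D(a)$ it originated from under the Kohnert moves and then flip. More concretely, your step (3) is wrong as stated: the semi-skyline side lives on the \emph{reversed} shape $\rev(a)$ with basement entries $n+1-i$ in row $i$, so the dictionary is not ``row $i$ of $D$ $\leftrightarrow$ entry $i$ in $\Phi(D)$'' but involves the involution $i \leftrightarrow n+1-i$; without this the map will not be weight-preserving.

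Your own diagnosis of step (4) is accurate, and your proposed fallback --- show both generating functions satisfy the same Demazure recurrence $f_a = \pi_i f_{s_i a}$ and agree at the dominant base case --- is in fact much closer to how the result is established in the cited literature (where the semi-skyline model arises as a specialization of the Haglund--Haiman--Loehr formula for nonsymmetric Macdonald polynomials, which are characterized by such operator recursions). This route is not circular even with the paper's Kohnert-diagram definition, since Kohnert's theorem is precisely that the Kohnert generating function satisfies the $\pi_i$ recurrence; you may invoke it. I would recommend abandoning the direct bijection and writing up the recurrence argument instead.
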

\begin{example}
From Example~\ref{ex:basement}, we again compute $\key_{021} =  \xx^{021} + \xx^{111} + \xx^{201} + \xx^{210} + \xx^{120}$.
\end{example}

Yet another formula for key polynomials is given in terms of \emph{Kohnert tableaux} \cite{Assaf.Searles:2}. Kohnert tableaux associate a canonical path in Kohnert's algorithm from the diagram of a weak composition to a given Kohnert diagram. In fact, the Kohnert tableaux are equivalent to the key semi-skyline fillings turned upside down, but are described by quite different local rules, which arise from Kohnert's algorithm as opposed to considerations in Macdonald polynomial theory. We omit the details of this construction, for which see \cite{Assaf.Searles:2}.

One might naturally ask, analogously for Schubert polynomials, whether key polynomials restrict to an important basis of $\sym_n \subset \asym_n$. Exactly the same as for the Schubert polynomials, the key polynomials in $\sym_n$ are exactly the basis of Schur polynomials. Thus the key basis of $\asym_n$ is also a lift of the Schur basis of $\sym_n$. The key polynomial $\key_a$ is a Schur polynomial if and only if $a$ is weakly increasing; in this case, we have $\key_a = s_{\sort{a}}$.

Moreover, the \emph{stable limits} of the key polynomials are exactly the Schur functions. Given $m\in \mathbb{Z}_{\ge 0}$ and a weak composition $a$, recall that $0^m a$ denotes the weak composition obtained by prepending $m$ zeros to $a$, e.g., $0^2 (1,0,3) = (0,0,1,0,3)$. The stable limit of $\key_a$ is the formal power series $\lim_{m\to \infty} \key_{0^m a}$. The following result is implicit in work of Lascoux and Sch\"utzenberger \cite{Lascoux.Schutzenberger:key}; an explicit proof is given in \cite{Assaf.Searles:2} with further details.

\begin{theorem}[{\cite{Lascoux.Schutzenberger:key,Assaf.Searles:2}}]
Let $a$ be a weak composition. Then the stable limit of the key polynomial $\key_a$ is the Schur function associated to the partition obtained by rearranging the entries of $a$ into decreasing order. That is,
\[\lim_{m\to \infty} \key_{0^m a} = s_{\sort{a}}(X).\]
\end{theorem}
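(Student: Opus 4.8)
The plan is to compute the stable limit using the combinatorial formula for key polynomials in terms of key semi-skyline fillings, and identify the result with Littlewood's formula (Theorem~\ref{thm:littlewood_formula}) for Schur polynomials. Write $\lambda = \sort{a}$. First I would observe that prepending zeros to $a$ does not change the multiset of nonzero parts, so it suffices to understand how $\KSSF(0^m a)$ stabilizes as $m \to \infty$. The diagram $D(\rev(0^m a))$ is the diagram $D(\rev(a))$ with $m$ empty rows appended at the top; augmenting with the basement, the top $m$ basement boxes get entries $n+m+1-i$ for $i = 1, \ldots, m$ (using $n+m$ total rows), and crucially these rows are empty apart from the basement, so they impose no constraints beyond those on the original rows. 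The effect is simply to shift the allowed entries: as $m$ grows, the relevant fillings of the nonempty rows are, up to relabeling, exactly the fillings of $D(\rev(a))$ with arbitrarily large integers, subject to (S.1)--(S.3) and the basement condition with entries now $n+m, n+m-1, \ldots$ reading down.

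The key step is then to show that in the limit the basement condition becomes vacuous and the set of stable fillings is in weight-preserving bijection with $\SSYT(\lambda)$. I would argue that for $m$ large, any semistandard Young tableau $U$ of shape $\lambda$ determines a unique semi-skyline filling of $D(\rev(0^m a))$ whose nonbasement entries have the prescribed weight: sort the rows of $D(\rev(a))$ by length, place the rows of $U$ into the rows of $D(\rev(a))$ appropriately (reversing within rows to get weak decrease), and check that the inversion-triple condition (S.3) on a semi-skyline filling translates precisely to the column-strict, row-weakly-increasing conditions defining $\SSYT(\lambda)$ once the basement entries are large enough that no triple involving a basement box is ever an inversion-violation. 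This is the content that makes the limit stabilize: once each basement entry exceeds every entry that can appear in its row, all triples through the basement are automatically inversion, so the basement drops out of the count. The resulting bijection is weight-preserving by construction, so
\[
\lim_{m\to\infty}\key_{0^m a} = \sum_{U \in \SSYT(\lambda)} \xx^{\wt(U)} = s_\lambda(X)
\]
by Theorem~\ref{thm:littlewood_formula}.

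The main obstacle I anticipate is making the bijection between stable key semi-skyline fillings and ordinary semistandard Young tableaux precise and verifying that it is weight-preserving: the semi-skyline rows are indexed by the (unsorted) composition $\rev(a)$ and are weakly \emph{decreasing}, whereas $\SSYT(\lambda)$ rows are indexed by the sorted partition $\lambda$ and weakly \emph{increasing}, so one must carefully track how the inversion-triple condition interacts with the reordering of rows by length and the reversal within rows. A cleaner alternative, which I would pursue if the direct bijection gets unwieldy, is to avoid semi-skyline fillings entirely and instead use the Demazure-operator definition $\key_a = \pi_{w(a)} \xx^{\sort{a}}$: prepending zeros replaces $\pi_{w(a)}$ by $\pi_{w(0^m a)}$, and as $m \to \infty$ the product of divided-difference-type operators $\pi_1 \pi_2 \cdots$ acting on $\xx^\lambda$ converges, by a standard symmetrization argument (e.g.\ the identity $\lim_m \pi_1 \pi_2 \cdots \pi_m = $ full symmetrization on the relevant degree), to the symmetric function $s_\lambda(X)$; this is essentially the classical fact that iterated isobaric divided differences compute Schur functions, and it sidesteps all the tableau bookkeeping.
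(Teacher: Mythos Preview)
The paper does not supply a proof of this theorem; it is stated with attribution to \cite{Lascoux.Schutzenberger:key} and \cite{Assaf.Searles:2}, and the surrounding text explicitly sends the reader to the latter reference for an argument. So there is no in-paper proof against which to compare.

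On your first approach, there is one cosmetic slip and one genuine gap. The cosmetic slip: since $\rev(0^m a)=(\rev(a),0^m)$, the $m$ new empty rows sit at the \emph{bottom} of $\overline{D}(\rev(0^m a))$, not the top; it is the original $n$ nonempty rows, now at the top, whose basement entries become $n+m,n+m-1,\ldots,m+1$ and grow with $m$. Your later sentence already uses this correct picture, so the slip is harmless. The real gap is the claim that ``once each basement entry exceeds every entry that can appear in its row, all triples through the basement are automatically inversion.'' This is false. In a triple with $Z$ equal to a basement entry, inversion means \emph{not} $X\le Y\le Z$; when $Z$ is enormous the condition $Y\le Z$ is automatic, so inversion reduces to $X>Y$, which is still a genuine constraint on the column-$1$ entries. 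Thus the basement does \emph{not} drop out in the limit: what you are left with is the set of semi-skyline fillings of the fixed composition shape $\rev(a)$ with an ``infinite decreasing'' basement, and you must still show that this set, for each weight, has the same cardinality as $\SSYT(\lambda)$. That statement is essentially equivalent to the theorem itself (via the semi-skyline formula for $\key$), and its proof is the nontrivial combinatorics of Mason's bijection between skyline fillings and semistandard Young tableaux --- not merely ``reversing within rows and reordering by length.''

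Your fallback via the isobaric operators $\pi_i$ is the sounder route and is closer to how the result is usually argued. Concretely, one can use Theorem~\ref{thm:keytoatom}: $\key_{0^m a}$ is the sum of $\atom_b$ over $b\in\lswap(0^m a)$, while $s_{\sort a}(x_1,\ldots,x_{n+m})$ is the sum over \emph{all} $b$ of length $n+m$ with $\sort b=\sort a$. For any fixed monomial the contributing atoms have support in a fixed initial segment, and once $m$ exceeds that bound every such $b$ lies in $\lswap(0^m a)$; hence the coefficients agree in the limit. This is short, avoids the skyline bookkeeping entirely, and uses only facts already stated in the paper.
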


Unlike the Schubert basis, however, the key basis does not have positive structure constants.

Schubert polynomials expand in the key basis with positive coefficients, although we omit the details of this decomposition. Let $T$ be a semistandard Young tableau. Define the \emph{column reading word}  ${\rm colword}(T)$ of $T$ to be the word obtained by writing the entries of each column of $T$ from bottom to top, starting with the leftmost column and proceeding rightwards. Then, as given in \cite[Theorem~4]{Reiner.Shimozono}, we have 
\[\schub_\pi = \sum_{{\rm colword}(T) \in \reduced(\pi^{-1})} \key_{\wt(K^0_-(T))}\]
where the sum is over all semistandard Young tableaux $T$ whose column reading word is a reduced word for $\pi^{-1}$, and $K^0_-(T)$ is the \emph{left nil key} of $T$, as defined in \cite{Lascoux.Schutzenberger:key, Reiner.Shimozono}. For another approach to this decomposition, see \cite{Assaf:EG}.

The Demazure atoms $\atom_a$ form another basis of $\asym_n$, introduced and studied in \cite{Lascoux.Schutzenberger:key}, where they are referred to as \emph{standard bases}. Demazure atoms are characters of quotients of Demazure modules, and, like key polynomials, also arise as specializations of nonsymmetric Macdonald polynomials \cite{Mason:RSK}. Just as we defined fundamental slide polynomials as the pieces of Schubert polynomials given by the summands of Theorem~\ref{thm:schub2X}, we can define the Demazure atoms as the pieces of quasiSchur polynomials given by the summands of the definition in Equation~\eqref{eq:quasiSchur}: Given a weak composition $a$, the {\bf Demazure atom} $\atom_a$ is given by
\[\atom_a \coloneqq \sum_{T\in \ASSF(a)} \xx^{\wt(T)}.\]

\begin{example}
We have 
\[\atom_{(1,0,3)} = \xx^{103} + \xx^{112} +\xx^{202} +\xx^{121} +\xx^{211},\]
where the monomials are determined by the five semistandard composition tableaux of shape $(1,0,3)$ from Figure~\ref{fig:ASSF}.
\end{example}

The Demazure atom basis does not have positive structure coefficients. However, it does exhibit a variety of surprising positivity properties.
First, notice that the definition that we have given immediately implies that the quasiSchur polynomial $\qschur_\alpha(x_1, \ldots , x_n)$ expands positively in Demazure atoms.

\begin{proposition}[\cite{Haglund.Luoto.Mason.vanWilligenburg:quasiSchur}]
The quasiSchur polynomials expand positively in the Demazure atoms:
\[\qschur_\alpha(x_1, \ldots , x_n) = \sum_{a^+ = \alpha} \atom_a,\]
where the sum is over weak compositions $a$ of length $n$.
\end{proposition}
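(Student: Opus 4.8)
The plan is to unwind the two definitions involved and observe that the claimed identity is a tautological regrouping of terms. Recall from Equation~\eqref{eq:quasiSchur} that
\[
\qschur_\alpha(x_1, \ldots , x_n) = \sum_{a^+ = \alpha} \sum_{T \in \ASSF(a)} \xx^{\wt(T)},
\]
where the outer sum runs over weak compositions $a$ of length $n$ with positive part $\alpha$, and recall that the Demazure atom is defined by $\atom_a = \sum_{T \in \ASSF(a)} \xx^{\wt(T)}$. Substituting the latter expression into the former immediately yields $\qschur_\alpha = \sum_{a^+ = \alpha} \atom_a$, with every coefficient equal to $1$; in particular the expansion is positive, which is what the proposition asserts. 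So the first and essentially only step is this substitution.

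The remaining points merit only a sentence of care. First, one should observe that the outer sum is genuinely finite: a weak composition $a$ of length $n$ with $a^+ = \alpha$ is determined by the choice of which $\ell(\alpha)$ of the $n$ coordinates are nonzero, since the nonzero entries, read left to right, must then be exactly $\alpha_1, \alpha_2, \dots$; hence there are precisely $\binom{n}{\ell(\alpha)}$ terms. Second, some of these $\atom_a$ may individually vanish when $\ASSF(a) = \varnothing$, but this does not affect the identity, as such $a$ contribute nothing to either side. I do not expect any genuine obstacle here: the substantive content — that $\qschur_\alpha$ is well-defined, quasisymmetric, and a basis element, and likewise that the $\atom_a$ form a basis — is assumed from the definitions recalled above and from the cited work, and the present statement is simply the formal reflection of the fact that $\qschur_\alpha$ was constructed as a sum of $\atom_a$'s in the first place.
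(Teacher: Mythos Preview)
Your proof is correct and matches the paper's own reasoning: the paper explicitly notes that ``the definition that we have given immediately implies'' this expansion, which is precisely your substitution of $\atom_a = \sum_{T\in \ASSF(a)} \xx^{\wt(T)}$ into Equation~\eqref{eq:quasiSchur}. There is nothing more to add.
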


It is also the case that key polynomials expand positively in Demazure atoms. Let $S_n$ act on weak compositions of length $n$ via $v\cdot (a_1, \ldots , a_n) = (a_{v^{-1}(1)},\ldots , a_{v^{-1}(n)})$. Given a weak composition $a$, let $v(a)$ denote the permutation of minimal Coxeter length such that $v(a) \cdot a = \sort{a}$.

\begin{theorem}[\cite{Lascoux.Schutzenberger:key}]\label{thm:keytoatom}
The key polynomials expand positively in the Demazure atoms:
\[\key_a = \sum_{\substack{v(b)\le v(a) \\ \sort{b}=\sort{a}}}\atom_b,\]
where $\le$ denotes the (strong) Bruhat order on permutations.
\end{theorem}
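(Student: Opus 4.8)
The plan is to lift everything to operators. Recall from above that $\key_a = \pi_{w(a)}\xx^{\sort{a}}$, where $w(a)$ is a minimal-length permutation sorting $a$ and $\pi_w$ is the reduced-word-independent product of the isobaric divided differences $\pi_i$. Introduce the companion operators $\theta_i \coloneqq \pi_i - \id$; these satisfy $\theta_i^2 = -\theta_i$ together with the same braid and commutation relations as the $\pi_i$, so they also furnish a copy of the $0$-Hecke algebra $\mathcal{H}_n$ and $\theta_w$ is well defined for each $w$. The first ingredient I would invoke is the classical fact --- essentially the Lascoux--Sch\"utzenberger ``standard basis'' description \cite{Lascoux.Schutzenberger:key} --- that the Demazure atoms admit the exactly parallel formula $\atom_b = \theta_{w(b)}\xx^{\sort{b}}$; one checks this agrees with the $\ASSF$ definition given above (for instance by verifying both satisfy the same recursion under the $\pi_i$), or one simply takes it as an alternative definition of $\atom_b$.

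The crux is then an identity purely internal to $\mathcal{H}_n$:
\[
\pi_w \;=\; \sum_{u \leq w} \theta_u,
\]
the sum running over the lower Bruhat interval of $w$. I would prove this by induction on Coxeter length: writing $w = w' s_i$ with $\ell(w) = \ell(w')+1$, expand $\pi_w = \pi_{w'}(\id + \theta_i) = \sum_{u\leq w'}\theta_u + \sum_{u\leq w'}\theta_u\theta_i$, and use that $\theta_u\theta_i = \theta_{us_i}$ when $us_i > u$ while $\theta_u\theta_i = -\theta_u$ when $us_i < u$ (the latter from $\theta_i^2 = -\theta_i$). The lifting property of Bruhat order then reorganizes the right-hand side precisely into $\sum_{x\leq w}\theta_x$, the cancellations accounting for exactly those elements counted twice. (This is the $q=0$ shadow of the passage between two standard bases of the Iwahori--Hecke algebra.)

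Finally I would specialize at $\xx^\lambda$ with $\lambda = \sort{a}$. The elementary computation $\pi_i\xx^\lambda = \xx^\lambda$ when $\lambda_i = \lambda_{i+1}$ gives $\theta_i\xx^\lambda = 0$ in that case, hence $\theta_u\xx^\lambda = 0$ unless $u$ is the minimal-length representative of its coset modulo the Young subgroup $\mathrm{Stab}(\lambda)$ fixing $\lambda$ (write $u = u'\sigma$ with $\sigma \in \mathrm{Stab}(\lambda)$ and $\ell(u) = \ell(u')+\ell(\sigma)$; then $\theta_\sigma\xx^\lambda = 0$, since every reduced word of $\sigma$ ends in some $s_i$ with $\lambda_i = \lambda_{i+1}$). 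For $u$ that \emph{is} such a minimal representative, $\theta_u\xx^\lambda = \atom_b$ where $b$ is the unique weak composition with $w(b) = u$ and $\sort{b} = \lambda$; moreover $b \mapsto w(b)$ is a bijection from weak compositions with $\sort{b} = \lambda$ onto these minimal representatives, and the condition ``$w(b)\leq w(a)$'' transports across it unchanged (as Bruhat order is inversion-invariant, this coincides with the statement's ``$v(b)\leq v(a)$''). Applying $\pi_{w(a)} = \sum_{u\leq w(a)}\theta_u$ to $\xx^\lambda$ and discarding the vanishing terms therefore yields exactly $\sum_{b\,:\,\sort{b}=\lambda,\ w(b)\leq w(a)}\atom_b$, which is the asserted identity.

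The step I expect to be the genuine obstacle is the honest identification of the operator formula $\atom_b = \theta_{w(b)}\xx^{\sort{b}}$ with the combinatorial $\ASSF$ definition; everything after that is bookkeeping, the only delicacy being the behaviour when $\lambda$ has repeated parts, where one must check carefully that ``minimal coset representative below $w(a)$'' and ``composition $b$ with $\sort{b} = \lambda$ and $w(b)\leq w(a)$'' really do correspond. If one prefers to avoid the $0$-Hecke identity, the theorem can instead be proved by induction on $\ell(w(a))$ directly from the $\pi_i$-recursion for $\key$ and the $\theta_i$-recursion for $\atom$, but the inductive step then reduces to the same Bruhat-interval lifting computation, so there is no real saving.
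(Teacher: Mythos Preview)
Your approach is correct and is essentially the classical Lascoux--Sch\"utzenberger argument. Note, however, that the paper is a survey and does not itself prove this theorem; it merely cites \cite{Lascoux.Schutzenberger:key}. So there is no ``paper's own proof'' to compare against beyond the original source, whose method you have accurately reconstructed.

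A few remarks on details. First, your parenthetical about Bruhat order being inversion-invariant is unnecessary here: in this paper $v(a)$ and $w(a)$ are defined identically (the action of $s_i$ is specified the same way in both places; the paper simply switches letters). Second, with the paper's convention one finds that $\{w(b):\sort{b}=\lambda\}$ is exactly the set of minimal-length representatives of the \emph{left} cosets $S_n/\mathrm{Stab}(\lambda)$, which is precisely the set your vanishing argument for $\theta_u\xx^\lambda$ isolates; so the bijection step goes through without needing to pass to inverses. Third, your identification of the real content---that the operator definition $\atom_b=\theta_{w(b)}\xx^{\sort{b}}$ agrees with the $\ASSF$ combinatorial definition used in the paper---is accurate; this equivalence is due to Mason \cite{Mason:RSK} (via the nonsymmetric Macdonald specialization) and is indeed the one nontrivial external input.

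The $0$-Hecke identity $\pi_w=\sum_{u\le w}\theta_u$ and its inductive proof via the lifting property are exactly as you describe, and the cancellation you sketch (the terms with $us_i<u$ cancelling in pairs) is correct. Overall the argument is sound.
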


\begin{example}
Let $a=(1,0,3)$. Then $v(a) = 231$ and
\[\key_{(1,0,3)} = \atom_{(1,0,3)} + \atom_{(1,3,0)} + \atom_{(3,0,1)} + \atom_{(3,1,0)}.\] 
\end{example}

Finally, we mention the following remarkable conjecture of V.~Reiner and M.~Shimozono; for more details on this conjecture, see the work of A.~Pun \cite{Pun}. For a generalization, see \cite{Monical.Pechenik.Searles}. Observe that the conjecture would follow trivially from Theorem~\ref{thm:keytoatom} if either the key polynomial or the Demazure atom basis had positive structure coefficients; however, neither does, so the conjecture is quite mysterious.

\begin{conjecture}[Reiner--Shimozono]
The product $\key_a \cdot \key_b$ expands positively in Demazure atoms.
\end{conjecture}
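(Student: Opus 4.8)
The plan is to work crystal-theoretically, so that the product becomes a tensor product and positivity becomes a decomposition statement. First I would realize the key polynomial $\key_a$ as the character of a Demazure crystal $\mathcal{D}_a \subseteq \mathcal{B}(\sort{a})$, where $\mathcal{B}(\lambda)$ denotes the highest-weight $\mathfrak{gl}_n$-crystal on semistandard Young tableaux of shape $\lambda$; and realize the Demazure atom $\atom_a$ as the character of the \emph{atom} $\mathcal{A}_a \subseteq \mathcal{B}(\sort{a})$, which---by the Lascoux--Sch\"utzenberger theory underlying the skyline model of Section~\ref{sec:poly}---is the fibre $\{T \in \mathcal{B}(\sort{a}) : K_+(T) = \mathrm{key}(a)\}$ of the right-key map $K_+$. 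In these terms Theorem~\ref{thm:keytoatom} says precisely that $\mathcal{D}_a = \bigsqcup_{v(b) \le v(a),\ \sort{b} = \sort{a}} \mathcal{A}_b$: every Demazure crystal is a disjoint union of atoms, and the atoms are the common refinement of all the Demazure crystals sitting inside a fixed $\mathcal{B}(\lambda)$.

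With this setup $\key_a \cdot \key_b$ is the character of the tensor product crystal $\mathcal{D}_a \otimes \mathcal{D}_b$. Fixing a crystal isomorphism $\mathcal{B}(\sort{a}) \otimes \mathcal{B}(\sort{b}) \cong \bigsqcup_\nu \mathcal{B}(\nu)^{\oplus c^\nu_{\sort{a}, \sort{b}}}$ (a Littlewood--Richardson decomposition as in Theorem~\ref{thm:LRrule}), the subset $\mathcal{D}_a \otimes \mathcal{D}_b$ is transported to a subset of a disjoint union of highest-weight crystals, and the conjecture would follow from showing that this subset is itself a disjoint union of atoms---equivalently, that it is saturated under replacing an element by another one with the same right key. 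The steps would then be: (1) compute, or at least constrain, the right key $K_+(x \otimes y)$ in the relevant copy of $\mathcal{B}(\nu)$ in terms of $K_+(x)$ and $K_+(y)$; (2) prove that when $K_+(x) \le \mathrm{key}(a)$ and $K_+(y) \le \mathrm{key}(b)$, the preimage in $\mathcal{D}_a \otimes \mathcal{D}_b$ of each key tableau is either empty or exactly one atom; (3) reduce to base cases by induction on $\ell(v(a)) + \ell(v(b))$, using that each Demazure operator acts on atom characters with nonnegative coefficients (explicitly $\pi_i \atom_c \in \{0,\ \atom_c,\ \atom_c + \atom_{s_i \cdot c}\}$ according to whether $c_i < c_{i+1}$, $c_i = c_{i+1}$, or $c_i > c_{i+1}$) together with the twisted Leibniz identity $\pi_i(fg) = \pi_i(f)\,g + x_{i+1}\,(s_i f)\,\partial_i(g)$; and (4) handle the base case where one factor, say $\key_b$, is antidominant and hence a Schur polynomial $\schur_{\sort{b}}$---here $\partial_i \key_b = 0$ makes the Leibniz term vanish and the induction of step (3) closes, recovering the positivity of the $\key \cdot \schur$ (and $\qschur \cdot \schur$) expansions and the Pieri-type instances established by Pun.

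The hard part---and the reason the conjecture remains open---is step (1)/(2): there is no known machinery controlling the \emph{atom} (equivalently, right-key) decomposition of a tensor product of Demazure crystals. One cannot simply invoke excellent-filtration results (van der Kallen, Polo, Mathieu), since those produce filtrations by \emph{Demazure} modules and would express $\key_a \cdot \key_b$ as a nonnegative sum of \emph{key} polynomials, which is false because the key basis does not have nonnegative structure coefficients; the proof must therefore use information that distinguishes atoms from keys, and the signature (tensor-product) rule interacts with the right-key algorithm in a way nobody has yet been able to organize. Correspondingly, the induction of step (3) does not close on its own, precisely because the Leibniz rule above is not multiplicative: when $a$ and $b$ share a descent at every ascent position of $a$, the correction term $x_{i+1}(s_i f)\partial_i(g)$ obstructs the reduction, and the simplest unreachable case $\key_{(0,1)} \cdot \key_{(1,0)} = \atom_{(2,0)} + \atom_{(1,1)}$ already illustrates this. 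The most promising concrete route seems to be to construct a ``keyed'' refinement of Mason's RSK correspondence---an insertion that records right keys on both output fillings and is functorial under concatenation of biwords---so that a pair of key skyline fillings associated to $a$ and $b$ maps bijectively and weight-preservingly onto a disjoint union, over weak compositions $c$, of the composition tableau sets $\ASSF(c)$ defining $\atom_c$. Designing such an insertion and proving that the right-key data it carries exactly pins down the target $c$ would be the crux; checking it first on the Pieri cases would be a sensible proof of concept.
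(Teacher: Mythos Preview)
This statement is presented in the paper as an \emph{open conjecture} of Reiner and Shimozono; the paper offers no proof and explicitly says ``the conjecture is quite mysterious.'' So there is no ``paper's own proof'' to compare against.

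Your write-up is not a proof either, and you are candid about this: you lay out a crystal-theoretic plan, correctly set up the dictionary (key $\leftrightarrow$ Demazure crystal, atom $\leftrightarrow$ right-key fibre, product $\leftrightarrow$ tensor product), and then accurately identify that steps (1)/(2)---controlling the atom decomposition of $\mathcal{D}_a \otimes \mathcal{D}_b$---are exactly the missing ingredient nobody knows how to supply. As a research outline this is reasonable and well-informed; as a proof it has a declared hole at its center, so it does not establish the conjecture.

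Two small technical points worth tightening if you pursue this. First, your illustrative ``unreachable'' example $\key_{(0,1)}\cdot\key_{(1,0)}$ actually lands in your own base case, since in the paper's conventions $\key_a$ is Schur exactly when $a$ is weakly increasing, and $(0,1)$ is; pick an example where \emph{neither} factor is Schur (e.g., both weak compositions having a strict descent) to genuinely exhibit the obstruction in step~(3). Second, your remark about excellent filtrations is slightly miscast: the van der Kallen--Polo--Mathieu machinery does not assert that a tensor product of two Demazure modules has a Demazure filtration, so the argument is not ``it would give key-positivity, which is false'' but rather ``the hypothesis needed (one factor a full Weyl module) is absent.'' The Leibniz identity and the description of $\pi_i$ on atoms are correct.
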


At this point, we have considered lifts to $\asym_n$ of the Schur polynomials (two distinct lifts even), the fundamental quasisymmetric polynomials, and the monomial quasisymmetric polynomials. It is natural then to hope for an appropriate lift of the remaining basis of $\qsym_n$ that we considered in Section~\ref{sec:qsym}, namely the quasiSchur polynomials. The next basis we consider is exactly this desired lift, the quasikey polynomials of \cite{Assaf.Searles:2}.
The quasikey polynomials are a lifting of the quasiSchur basis of $\qsym_n$ to $\asym_n$, and simultaneously a common coarsening of the fundamental slide polynomial and Demazure atom bases. 

Let $a$ be a weak composition of length $n$. The {\bf quasikey polynomial} associated to $a$ is given by
\[\qkey_a \coloneqq \sum_{\substack{b^+=a^+ \\ b\ge a}} \sum_{T\in \ASSF(a)}\xx^{\wt(T)}\]
where the first sum is over all weak compositions $b$ of length $n$ satisfying $b\ge a$ in dominance order and whose positive part is $a^+$. The form of this definition is due to \cite{Searles}. The quasikey polynomials were originally defined in \cite{Assaf.Searles:2} as a weighted sum of \emph{quasi-Kohnert tableaux}; we omit this alternate formulation.

\begin{example}
Let $a=(1,0,3)$. Then 
\[\qkey_{(1,0,3)} = \xx^{130}+\xx^{220} + \xx^{103} + \xx^{112} + \xx^{202} + \xx^{121} + \xx^{211},\]
where the monomials are determined by the first seven semistandard composition tableaux shown in Figure~\ref{fig:ASSF}.
\end{example}

The definition we have given here immediately implies that each quasikey polynomial $\qkey_a$ expands positively in Demazure atoms. Specifically, we have the following.

\begin{theorem}[{\cite[Theorem 3.4]{Searles}}]\label{thm:qkey2atom}
The quasikey polynomials expand positively in the Demazure atoms:
For a weak composition $a$ of length $n$, we have
\[\qkey_a = \sum_{\substack{b^+=a^+ \\ b\ge a}} \atom_b,\]
where the sum is over all weak compositions $b$ of length $n$ satisfying $b\ge a$ in dominance order and whose positive part is $a^+$.
\end{theorem}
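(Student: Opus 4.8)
The plan is to obtain the identity by directly regrouping the terms in the defining sum for $\qkey_a$. Recall that $\qkey_a$ is defined (following \cite{Searles}) as a double sum: the outer sum runs over all weak compositions $b$ of length $n$ with $b^+ = a^+$ and $b \ge a$ in dominance order, and for each such $b$ the inner sum runs over the semistandard composition tableaux of shape $b$, each contributing the monomial $\xx^{\wt(T)}$. The first step is simply to recognize, with $b$ held fixed, that the inner sum $\sum_{T \in \ASSF(b)} \xx^{\wt(T)}$ is by definition the Demazure atom $\atom_b$. Substituting gives $\qkey_a = \sum_{b} \atom_b$, the outer sum being over exactly the set of $b$ described in the statement. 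Positivity is then automatic, and in fact the expansion is multiplicity-free at the level of atoms: every $\atom_b$ appears with coefficient $1$.

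I would include the special case $a = (1,0,3)$ as a check on the bookkeeping: the length-$3$ weak compositions with positive part $(1,3)$ are $(1,3,0)$, $(1,0,3)$ and $(0,1,3)$, and of these only $(1,3,0)$ and $(1,0,3)$ dominate $(1,0,3)$; summing $\atom_{(1,3,0)}$ and $\atom_{(1,0,3)}$ reproduces the seven-term expansion displayed above.

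There is no genuine obstacle here: the theorem is, in the language we have set up, a direct unwinding of definitions, as anticipated by the remark that the definition ``immediately implies'' it. The one point that demands care is ensuring that the inner index set in the definition of $\qkey_a$ is $\ASSF(b)$, the semistandard composition tableaux of the \emph{dominating} shape $b$, precisely mirroring the structure of Equation~\eqref{eq:quasiSchur} for $\qschur_\alpha$; replacing it by $\ASSF(a)$ would not yield the stated identity (nor agree with the example). An alternative route, starting from the original definition of \cite{Assaf.Searles:2} via quasi-Kohnert tableaux, would instead require exhibiting a weight-preserving bijection between those tableaux and the pairs $(b,T)$ above; but constructing that bijection is exactly the content of the reformulation of \cite{Searles}, which we are free to take as our starting definition.
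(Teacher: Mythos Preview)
Your proposal is correct and matches the paper's approach exactly: the paper states that the definition ``immediately implies'' the atom expansion, and your argument is precisely that unwinding. You are also right that the inner index set in the displayed definition of $\qkey_a$ must be $\ASSF(b)$ rather than the paper's $\ASSF(a)$ (a typo), as the example confirms.
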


\begin{example}
One easily calculates from Theorem~\ref{thm:qkey2atom} that
\[\qkey_{(1,0,3)} = \atom_{(1,0,3)} + \atom_{(1,3,0)}.\]
Compare this calculation to the tableaux of Figure~\ref{fig:ASSF}.
\end{example}

Less clear from our definition is the following additional positivity property of quasikey polynomials.

\begin{theorem}[\cite{Assaf.Searles}]
The quasikey polynomials expand positively in the fundamental slide polynomials:
For a weak composition $a$ of length $n$, we have
\[\qkey_a = \sum_{T} \slide_{\wt(T)},\]
where the sum is over all %initial 
quasiYamanouchi tableaux $T$ whose support contains the support of $a$, and such that $T \in \ASSF(b)$ for some weak composition $b$ of length $n$ with $b^+ = a^+$ and $b\ge a$.
\end{theorem}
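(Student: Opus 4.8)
The plan is to prove the identity by reducing both sides to monomials and exhibiting a weight-preserving bijection. For the left-hand side, combine Theorem~\ref{thm:qkey2atom} with the definition of the Demazure atoms:
\[
\qkey_a \;=\; \sum_{\substack{b^+ = a^+ \\ b \ge a}} \; \sum_{S \in \ASSF(b)} \xx^{\wt(S)},
\]
the outer sum over weak compositions $b$ of length $n$. For the right-hand side, Theorem~\ref{thm:slide2X} expands each summand as $\slide_{\wt(T)} = \sum_{c}\xx^{c}$, over weak compositions $c$ with $c \ge \wt(T)$ and $c^{+} \vDash \wt(T)^{+}$. Thus it suffices to construct a bijection from the multiset of pairs $(b,S)$ above to the multiset of pairs $(T,c)$, where $T$ is a quasiYamanouchi tableau whose support contains that of $a$, with $T \in \ASSF(b')$ for some length-$n$ weak composition $b'$ satisfying $(b')^{+} = a^{+}$ and $b' \ge a$, and $c \ge \wt(T)$ with $c^{+} \vDash \wt(T)^{+}$, matching monomials via $\wt(S) = c$. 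Note the pair $(T_0,a)$, where $T_0$ fills row $i$ of $D(a)$ with copies of $i$, is always valid and contributes the term $\slide_a$.

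The bijection is given by a \emph{quasi-Yamanouchi rectification} $\rho$ on semistandard composition tableaux, modeled on the destandardization underlying Theorem~\ref{thm:S2F} and on the map $\PD(\pi)\to\QPD(\pi)$ underlying Theorem~\ref{thm:Schub2slide}. Given $(b,S)$, one repeatedly raises and merges runs of equal entries --- relabelling a value $i$ by $i{+}1$ throughout and relocating the affected rows --- to remove violations of the quasiYamanouchi condition, and slides nonempty rows to the dominance-smallest positions consistent with the constraints $(b')^{+}=a^{+}$, $b'\ge a$, and $\mathrm{supp}(b')\supseteq\mathrm{supp}(a)$. One then verifies: the process terminates at a quasiYamanouchi tableau $T:=\rho(S) \in \ASSF(b')$ for a valid shape $b'$ whose support contains that of $a$ --- here condition (S.4) is used, since it forces value $i$ to appear whenever row $i$ is nonempty; that $\wt(S)\ge\wt(\rho(S))$ with $\wt(S)^{+}\vDash\wt(\rho(S))^{+}$, because each raise-and-merge step pushes weight rightwards in dominance order and coarsens the positive part; and that for each valid target pair $(T,c)$ there is a unique source pair $(b,S)$ with $\rho(S)=T$ and $\wt(S)=c$. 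The last point supplies the inverse: the data $c\ge\wt(T)$ and $c^{+}\vDash\wt(T)^{+}$ determine a unique way to split the runs of $T$ and re-anchor its rows so as to realize weight $c$ on a dominance-determined shape; the rigidity here is the same as in Theorem~\ref{thm:S2F}, and indeed prepending sufficiently many zeros to $a$ collapses the whole argument onto a reindexing of Theorem~\ref{thm:S2F}.

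The main obstacle is the correct formulation of $\rho$ and the verification that the pairing it induces is genuinely a bijection. One must show the raise-and-merge move is always legal (no configuration gets stuck), that its interaction with the necessary row relocations preserves semistandardity and every triple-inversion condition, and --- the subtlest point --- that the shape must change during rectification in a precisely controlled way: a monomial produced by an atom $\atom_b$ whose shape $b$ has support disjoint from that of $a$ still has to be absorbed into a $\slide_{\wt(T)}$ anchored at a shape whose support contains that of $a$, and the dominance and support bookkeeping must remain exactly tight enough for this to match up. A careful analysis of how the Kohnert-type or crystal-type moves on composition tableaux used in \cite{Assaf.Searles, Searles} act in this setting is what makes the argument precise.
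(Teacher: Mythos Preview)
The paper is a survey and does not itself prove this statement; it simply attributes the result to \cite{Assaf.Searles}. There is therefore no in-paper argument to compare against, and your proposal has to stand on its own.

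Your overall architecture---expand both sides into monomials via Theorem~\ref{thm:qkey2atom} and Theorem~\ref{thm:slide2X}, then match them by a destandardization-type bijection---is the correct strategy and is indeed how the result is obtained in the literature. But what you have written is a sketch with a genuine gap, one that you yourself flag in your final paragraph.

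The concrete problem is that your map $\rho$ is triggered only by \emph{violations of the quasiYamanouchi condition}, whereas some source tableaux are already quasiYamanouchi and fail only the support condition; your procedure does not handle these. Take $a=(1,0,3)$. The tableau $S$ of shape $(1,3,0)$ with row~$1=(1)$ and row~$2=(2,2,1)$ lies in the left-hand sum (since $(1,3,0)\ge(1,0,3)$ and $(1,3,0)^{+}=(1,3)$) and is quasiYamanouchi (both $1$ and $2$ occur in the first column), so your relabelling step does nothing to it. Your separate ``slide rows to dominance-smallest positions'' step is stated only at the level of the shape and says nothing about what happens to the non-leftmost entries; if one slides row~$2$ to row~$3$ and enforces (S.4), one obtains row~$3=(3,2,1)$, which is no longer quasiYamanouchi, so one must relabel \emph{again}. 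Thus the two operations are not independent; you have not specified in what order or how many times they are to be applied, nor shown that any such iteration terminates at the required target (here, shape $(1,0,3)$ with row~$3=(3,3,1)$) and is invertible from the pair $(T,c)$. Your closing sentence---deferring the ``careful analysis'' to \cite{Assaf.Searles, Searles}---is a citation of the theorem, not a proof of it.
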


\begin{example}
\[\qkey_{(1,0,3)} = \slide_{(1,0,3)} + \slide_{(2,0,2)}\]
where the two fundamental slides correspond to the $3$rd and $5$th composition tableaux in Figure~\ref{fig:ASSF}.
\end{example}

Although we claimed that the quasikey polynomials were to be a lift from $\qsym_n$ to $\asym_n$ of the quasiSchur polynomials, we have not yet explained this fact. The sense of this lift is given in the following proposition.

\begin{proposition}\cite[Theorem 4.16]{Assaf.Searles:2}\label{prop:qkeyisqschur}
We have $\qkey_a \in \qsym_n$ if and only if $a$ is of the form $0^k \alpha$, where $\alpha$ is a strong composition of length $n-k$. Moreover, we have
\[\qkey_{0^k \alpha} = \qschur_{\alpha}(x_1,\ldots , x_n).\] Thus, the quasikey polynomial basis of $\asym_n$ is a lift of the quasiSchur polynomial basis of $\qsym_n$.
\end{proposition}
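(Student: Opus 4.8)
The plan is to deduce the statement from the Demazure-atom expansion of quasikey polynomials in Theorem~\ref{thm:qkey2atom}, the Demazure-atom expansion of quasiSchur polynomials recalled above, and the triangularity of Demazure atoms with respect to dominance order.

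First I would record two elementary facts about the Demazure atom $\atom_a = \sum_{T \in \ASSF(a)} \xx^{\wt(T)}$. \emph{(i) Every monomial $\xx^c$ of $\atom_a$ has $c \geq a$ in dominance order.} Indeed, by conditions (S.2) and (S.4) for semistandard composition tableaux, every label in row $i$ of such a $T$ is at most $i$; hence all $a_1 + \dots + a_k$ boxes in the top $k$ rows carry labels $\leq k$, and therefore $\wt(T)_1 + \dots + \wt(T)_k \geq a_1 + \dots + a_k$ for every $k$. \emph{(ii) The monomial $\xx^a$ occurs in $\atom_a$:} filling each box of row $i$ of $D(a)$ with the label $i$ gives a semistandard composition tableau of weight $a$ (conditions (S.1), (S.2), (S.4) are immediate, and (S.3) is a short check on the two triple configurations). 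Combining (i) and (ii) with Theorem~\ref{thm:qkey2atom}, in $\qkey_a = \sum_{b^+ = a^+,\; b \geq a} \atom_b$ only the term $\atom_a$ can contribute the monomial $\xx^a$ (any other index $b$ satisfies $b > a$, so all monomials of $\atom_b$ have index $> a$); thus the coefficient of $\xx^a$ in $\qkey_a$ is a positive integer, whereas the coefficient of $\xx^b$ in $\qkey_a$ is $0$ for every $b < a$.

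Next I would handle the ``only if'' direction. Suppose $a$ is not of the form $0^k\alpha$; put $\alpha = a^+$ and let $k$ be the number of zero entries of $a$. A partial-sum computation shows that $0^k\alpha$ is the unique dominance-minimum among all length-$n$ weak compositions with positive part $\alpha$: if a weak composition $b$ with $b^+ = \alpha$ has $j$ zeros among its first $\ell$ entries, then $\sum_{i \leq \ell} b_i = \alpha_1 + \dots + \alpha_{\ell - j}$, which (as $\alpha$ has positive parts) is smallest when $j$ is largest, i.e.\ when the zeros are front-loaded. Since $a$ has positive part $\alpha$ but $a \neq 0^k\alpha$, this gives $0^k\alpha < a$. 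By the previous paragraph the coefficient of $\xx^a$ in $\qkey_a$ is nonzero while that of $\xx^{0^k\alpha}$ is $0$; as $a$ and $0^k\alpha$ have the same positive part, $\qkey_a \notin \qsym_n$.

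Finally, for the ``if'' direction and the identification: if $a = 0^k\alpha$ then, by the minimality just noted, every length-$n$ weak composition $b$ with $b^+ = \alpha$ automatically satisfies $b \geq 0^k\alpha$, so the constraint $b \geq a$ in Theorem~\ref{thm:qkey2atom} is vacuous and
\[
\qkey_{0^k\alpha} = \sum_{b^+ = \alpha} \atom_b = \qschur_\alpha(x_1, \dots, x_n),
\]
the second equality being the Demazure-atom expansion of the quasiSchur polynomial recalled above (both sums over length-$n$ weak compositions). As $\qschur_\alpha(x_1,\dots,x_n) \in \qsym_n$, this simultaneously proves $\qkey_{0^k\alpha} \in \qsym_n$ and the asserted formula. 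The only step with real content is the first one---triangularity of Demazure atoms and the appearance of $\xx^a$ in $\atom_a$---and the main fiddly points there are verifying condition (S.3) for the weight-$a$ filling and keeping track of dominance order in the presence of interleaved zeros; neither is deep.
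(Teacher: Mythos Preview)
Your argument is correct. The survey itself does not prove this proposition; it merely cites \cite[Theorem~4.16]{Assaf.Searles:2}. In that reference the quasikey polynomials are defined via quasi-Kohnert tableaux rather than Demazure atoms, so the original proof necessarily proceeds differently. Your route via Theorem~\ref{thm:qkey2atom} and the atom expansion of $\qschur_\alpha$ is a clean alternative: once you know $\qkey_a=\sum_{b^+=a^+,\,b\ge a}\atom_b$, the identification with $\qschur_\alpha$ when $a=0^k\alpha$ is immediate from the dominance-minimality of $0^k\alpha$, and the failure of quasisymmetry when $a\neq 0^k\alpha$ follows from the triangularity of atoms. Each step checks out: the bound $\wt(T)\ge a$ for $T\in\ASSF(a)$ from (S.2)+(S.4), the existence of the weight-$a$ filling (your verification of (S.3) for the row-constant filling is correct in both triple configurations), and the dominance-minimality of $0^k\alpha$ among compositions with positive part $\alpha$. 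One stylistic remark: the coefficient of $\xx^a$ in $\atom_a$ is in fact exactly $1$ (the weight-$a$ filling is unique, by the same partial-sum argument), though you only need positivity.
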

Moreover, the quasikey polynomials stabilize to the quasiSchur functions:

\begin{theorem}\cite[Theorem 4.17]{Assaf.Searles:2}
Let $a$ be a weak composition. Then
\[\lim_{m\to \infty} \qkey_{0^m a} = \qschur_{a^+}.\]
\end{theorem}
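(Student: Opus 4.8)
The plan is to compute the limit coefficient-by-coefficient after passing to Demazure atom expansions on both sides. Applying Theorem~\ref{thm:qkey2atom} to the weak composition $0^m a$ (which has length $n+m$ and positive part $a^+$) gives
\[
\qkey_{0^m a} = \sum_{\substack{b^+ = a^+\\ b \ge 0^m a}} \atom_b,
\]
the sum over weak compositions $b$ of length $n+m$. On the other hand, recall the Demazure atom expansion of the quasiSchur polynomial, $\qschur_{a^+}(x_1,\dots,x_{n+m}) = \sum_{b^+ = a^+} \atom_b$, taken over \emph{all} length-$(n+m)$ weak compositions $b$ with positive part $a^+$. Subtracting,
\[
\qschur_{a^+}(x_1,\dots,x_{n+m}) - \qkey_{0^m a} = \sum_{\substack{b^+ = a^+\\ b \not\ge 0^m a}} \atom_b,
\]
so everything reduces to showing that this error term is supported on high-indexed variables once $m$ is large.

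The key lemma I would prove is: if $b$ is a weak composition of length $n+m$ with $b^+ = a^+$ and $b \not\ge 0^m a$, then every monomial of $\atom_b$ is divisible by some $x_r$ with $r > m$. Indeed, $b \not\ge 0^m a$ means $\sum_{i \le k} b_i < \sum_{i \le k}(0^m a)_i$ for some $k$; since the first $m$ entries of $0^m a$ vanish and $b$ has nonnegative entries, necessarily $k = m+j$ with $1 \le j \le n$, whence $\sum_{i \le m+j} b_i < \sum_{i \le j} a_i \le |a| = |b|$. Therefore some entry $b_r$ with $r > m+j > m$ is positive. Now for any $T \in \ASSF(b)$, condition~(S.4) forces the first-column box in row $r$ to carry the label $r$, so $\wt(T)_r \ge 1$ and $x_r$ divides $\xx^{\wt(T)}$. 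This is the only place where real structure (rather than bookkeeping) enters: a failure of dominance pushes cells of $b$ into a high-indexed row, and the basement-type condition~(S.4) then injects a high-indexed variable into \emph{every} monomial produced by that shape.

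Granting the lemma, fix a monomial $\mu$ involving only $x_1,\dots,x_N$. For $m \ge N$, every atom appearing in $\qschur_{a^+}(x_1,\dots,x_{n+m}) - \qkey_{0^m a}$ has all of its monomials divisible by some $x_r$ with $r > m \ge N$, so $\mu$ does not occur in that difference; hence the coefficient of $\mu$ in $\qkey_{0^m a}$ equals that in $\qschur_{a^+}(x_1,\dots,x_{n+m})$. It remains to note that quasiSchur polynomials stabilize to the quasiSchur function: passing from $N'$ to $N'+1$ variables only adds atoms $\atom_c$ with $c_{N'+1}>0$, each contributing monomials divisible by $x_{N'+1}$, so the coefficient of $\mu$ in $\qschur_{a^+}(x_1,\dots,x_{N'})$ is eventually independent of $N'$ and equals the coefficient of $\mu$ in $\qschur_{a^+}$. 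Combining the two observations, for every monomial $\mu$ the coefficient of $\mu$ in $\qkey_{0^m a}$ stabilizes, as $m\to\infty$, to the coefficient of $\mu$ in $\qschur_{a^+}$, which is precisely the asserted identity of formal power series.
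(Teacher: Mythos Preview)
Your argument is correct. The survey does not actually prove this theorem; it merely records it with a citation to \cite[Theorem~4.17]{Assaf.Searles:2}, so there is no in-paper proof to compare against. Your route, subtracting the atom expansion of $\qkey_{0^m a}$ (Theorem~\ref{thm:qkey2atom}) from that of $\qschur_{a^+}(x_1,\dots,x_{n+m})$ and bounding the error via condition~(S.4), is a clean self-contained argument using exactly the ingredients available in the survey.

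Two small points of polish. First, in your key lemma you write $1 \le j \le n$, but in fact $j = n$ is impossible since then both partial sums equal $|a|$; you only use $j \ge 1$, so this does not affect the proof. Second, your stabilization claim for the quasiSchur polynomials implicitly uses that a length-$N'$ weak composition $c$ with $c^+ = a^+$ and its extension by a trailing zero yield the \emph{same} atom: this follows because (S.2) and (S.4) together force every entry of $T \in \ASSF(c)$ to be bounded by the largest occupied row index, so an empty final row is invisible. It would strengthen the write-up to say this explicitly.
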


As Schur polynomials expand positively in the quasiSchur polynomial basis, one might hope for the same positivity to hold for their respective lifts, the key polynomials (or Schubert polynomials) and quasikey polynomials. Indeed, these expansions are positive, with positive combinatorial formulas mirroring the expansion of Theorem~\ref{thm:s2S}. To provide a formula for this expansion, we need the concept of a left swap on weak compositions. A {\bf left swap} on a weak composition $a$ exchanges two entries $a_i$ and $a_j$ such that $a_i < a_j$ and $i<j$. In essence, left swaps move larger entries leftwards. Given a weak composition $a$ of length $n$, define the set $\lswap(a)$ to be all weak compositions $b$ of length $n$ that can be obtained from $a$ by a sequence of left swaps.

\begin{example}
\[\lswap(1,0,3) = \{(1,0,3), (1,3,0), (3,0,1), (3,1,0)\}.\]
\end{example}

In fact, the elements of $\lswap(a)$ are exactly those weak compositions $b$ such that $\sort{b}=\sort{a}$ and $w(b)\le w(a)$ in Bruhat order. Hence, in this new language, the formula in Theorem~\ref{thm:keytoatom} for expanding key polynomials in Demazure atoms may be re-expressed as follows.
\begin{proposition}[{\cite[Lemma 3.1]{Searles}}]
\[\key_a = \sum_{b\in \lswap(a)}\atom_b\]
\end{proposition}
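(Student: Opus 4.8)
The Proposition follows by combining Theorem~\ref{thm:keytoatom} with the displayed equivalence immediately preceding it, so the plan is to (i) identify the permutation $v(a)$ of Theorem~\ref{thm:keytoatom} with the permutation $w(a)$ appearing in the definition of $\lswap$, and (ii) prove that equivalence. For (i): both $v(a)$ and $w(a)$ are defined to be the unique permutation of minimal Coxeter length sending $a$ to $\sort{a}$, where a simple transposition $s_i$ acts on a weak composition by interchanging its $i$th and $(i{+}1)$st entries. The two $\mathcal{S}_n$-actions on weak compositions used in the text---the one implicit in the definition of the $\pi_i$, and the one given by $v\cdot(a_1,\dots,a_n)=(a_{v^{-1}(1)},\dots,a_{v^{-1}(n)})$---agree on each generator $s_i$, hence coincide; so $v(a)=w(a)$ and Theorem~\ref{thm:keytoatom} reads $\key_a=\sum_b\atom_b$, summed over weak compositions $b$ with $\sort{b}=\sort{a}$ and $w(b)\le w(a)$ in Bruhat order.

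It remains to show $\lswap(a)=\{b:\sort{b}=\sort{a}\text{ and }w(b)\le w(a)\}$. Two preliminary observations. First, $\ell(w(a))$ equals the coinversion number $\#\{\,i<j:a_i<a_j\,\}$; since a left swap replaces exactly one coinversion pair by a non-coinversion pair and only destroys (never creates) coinversions involving an intervening index, every left swap strictly decreases $\ell(w(\cdot))$, so left-swap sequences terminate, and $\lswap$ clearly preserves $\sort{\cdot}$. Second, writing $\lambda=\sort{a}$ with distinct values $v_1>v_2>\cdots>v_r$, associate to each rearrangement $c$ of $\lambda$ the nested position sets $P_t(c)=\{k:c_k\ge v_t\}$; the map $c\mapsto(P_1(c),\dots,P_r(c))$ is injective, and by the standard combinatorial description of Bruhat order on the parabolic quotient $\mathcal{S}_n/W_\lambda$ (as in Bj\"orner and Brenti's book), $w(b)\le w(a)$ if and only if $P_t(b)\le P_t(a)$ for all $t$ in the Gale (entrywise) order on increasing integer sequences. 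Thus it suffices to prove $\lswap(a)=\{\,b:\sort{b}=\sort{a}\text{ and }P_t(b)\le P_t(a)\ \forall t\,\}$.

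For $\subseteq$: a left swap of $a$ that exchanges entries $a_i<a_j$ with $i<j$ moves the larger value to the earlier slot, so for each $t$ it either fixes $P_t$ or replaces the position $j\in P_t(a)$ by the smaller position $i$; hence each $P_t(\cdot)$ weakly decreases in Gale order at every step, giving the inclusion. For $\supseteq$: let $b\ne a$ be a rearrangement of $\lambda$ with $P_t(b)\le P_t(a)$ for all $t$, and induct on $\ell(w(a))-\ell(w(b))$. Let $i$ be the least index with $a_i\ne b_i$. Comparing $P_t(a)$ and $P_t(b)$ at the threshold $t$ with $v_t=a_i$---they agree on $\{1,\dots,i-1\}$, while $i\in P_t(a)$ and $i\notin P_t(b)$ whenever $b_i<a_i$---Gale domination forces $a_i<b_i$. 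As $a$ and $b$ agree before position $i$, the value $b_i$ occurs in $a$ at some position $j>i$; let $j$ be the least such and let $a'$ be obtained from $a$ by the left swap exchanging positions $i$ and $j$. Then $a'$ agrees with $b$ on $\{1,\dots,i\}$, and $P_t(a')\le P_t(a)$ for all $t$ by the previous paragraph; the crucial point is that $P_t(b)\le P_t(a')$ still holds for all $t$, after which the inductive hypothesis applied to $(a',b)$ finishes the argument.

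The main obstacle is precisely that last point: verifying that the greedy left swap keeps us weakly above $b$ in the domination order, i.e.\ that one cannot get ``stuck'' having overshot $b$. This amounts to the statement that every parabolic Bruhat interval $[\,w(b),w(a)\,]$ in $\mathcal{S}_n/W_\lambda$ is connected by left-swap moves; it can be obtained from the lifting/subword properties of parabolic Bruhat order, or by a direct (slightly fiddly) case analysis of how $P_t(a)$, $P_t(a')$, and $P_t(b)$ compare around positions $i$ and $j$. The remaining ingredients---the action-matching in the first paragraph, the coinversion formula for $\ell(w(a))$, and the $\subseteq$ inclusion---are routine.
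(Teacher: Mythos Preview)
Your overall strategy matches the paper exactly: the survey states without proof that $\lswap(a)$ coincides with $\{b:\sort{b}=\sort{a},\ w(b)\le w(a)\}$, and the Proposition is then just Theorem~\ref{thm:keytoatom} rewritten in the new language (your observation that $v(a)=w(a)$ is correct and necessary for this). So the reduction step is exactly right.

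However, your attempted proof of the equivalence has a genuine error in the $\supseteq$ direction. The greedy choice of $j$---the least index $>i$ with $a_j=b_i$---does \emph{not} in general preserve the condition $P_t(b)\le P_t(a')$. A small counterexample: take $\lambda=(3,2,1)$, $a=(1,2,3)$, $b=(3,1,2)$. One checks $P_1(b)=\{1\}\le\{3\}=P_1(a)$ and $P_2(b)=\{1,3\}\le\{2,3\}=P_2(a)$, so $b$ lies below $a$. Your recipe gives $i=1$, $b_1=3$, and the only $j$ with $a_j=3$ is $j=3$, producing $a'=(3,2,1)$. But then $P_2(a')=\{1,2\}$ while $P_2(b)=\{1,3\}$, and $\{1,3\}\not\le\{1,2\}$; indeed $a'$ is the sorted partition, so $w(a')=\id$ and no further left swaps are available---you have overshot $b$. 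Thus the ``crucial point'' you flag as the main obstacle is not merely fiddly case analysis: with your choice of $j$ it is false.

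A repair is to drop the requirement that $a'$ agree with $b$ at position $i$ and instead perform a left swap corresponding to a \emph{cover} in parabolic Bruhat order lying weakly above $b$ (such a cover exists by the lifting property you allude to). Concretely, one may swap $a_i$ with $a_{j}$ where $j>i$ is chosen so that $a_i<a_j\le b_i$ and no intermediate index $k$ satisfies $a_i<a_k\le a_j$; this keeps $P_t(b)\le P_t(a')$ for all $t$, and the induction on $\ell(w(a))-\ell(w(b))$ then goes through. In the example above this gives $j=2$, $a'=(2,1,3)$, and one continues to $(3,1,2)=b$.
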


Define $\Qlswap(a)$ to be those $b\in \lswap(a)$ such that for all $c\in \lswap(a)$ with $c^+=b^+$, one has $c\ge b$ in dominance order.
\begin{example}
\[\Qlswap(1,0,3) = \{(1,0,3), (3,0,1\}.\]
\end{example}

\begin{theorem}[{\cite[Theorem 3.7]{Assaf.Searles:2}}]\label{thm:keytoqkey}
Let $a$ be a weak composition of length $n$. Then 
\[\key_a = \sum_{b\in \Qlswap(a)} \qkey_b.\]
\end{theorem}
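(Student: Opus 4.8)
The plan is to prove the identity by combining the two expansions into Demazure atoms that we already have in hand: Theorem~\ref{thm:qkey2atom}, which tells us $\qkey_b = \sum_{c^+ = b^+,\, c \ge b} \atom_c$, and the reformulation $\key_a = \sum_{b \in \lswap(a)} \atom_b$ of Theorem~\ref{thm:keytoatom}. Since the Demazure atoms are a basis of $\asym_n$, it suffices to verify that expanding the right-hand side $\sum_{b \in \Qlswap(a)} \qkey_b$ via Theorem~\ref{thm:qkey2atom} produces exactly the multiset $\lswap(a)$, each element with coefficient $1$. In other words, the whole theorem reduces to the combinatorial claim that the map $c \mapsto (\text{the unique } b \in \Qlswap(a) \text{ with } b^+ = c^+ \text{ that } c \text{ dominates})$ is a well-defined bijection from $\lswap(a)$ to itself that partitions $\lswap(a)$ into the ``dominance cosets'' $\{c \in \lswap(a) : c^+ = b^+,\ c \ge b\}$ indexed by $b \in \Qlswap(a)$.

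The key structural fact to establish is this: for each fixed strong composition $\gamma$, the set $L_\gamma := \{c \in \lswap(a) : c^+ = \gamma\}$, \emph{if nonempty}, has a unique dominance-minimal element, and that element is precisely the unique member of $\Qlswap(a)$ with positive part $\gamma$. Granting this, every $c \in \lswap(a)$ lies in exactly one $L_\gamma$, hence dominates exactly one element of $\Qlswap(a)$ (namely $\min L_\gamma$), and it is counted exactly once on the atom-expanded right-hand side; conversely Theorem~\ref{thm:qkey2atom} applied to $b = \min L_\gamma \in \Qlswap(a)$ only produces weak compositions $c$ with $c^+ = \gamma$ and $c \ge b$, and I must check that all such $c$ actually lie in $\lswap(a)$ (not merely in the a priori larger set of all weak compositions with that positive part dominating $b$). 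This last point — that $\{c : c^+ = \gamma,\ c \ge \min L_\gamma\}$ equals $L_\gamma$ rather than strictly containing it — is where the argument has teeth: it says the conditions ``$\sort{c} = \sort{a}$ and $w(c) \le w(a)$'' cut out, within each fixed-positive-part slice, exactly an upward-dominance-closed set with a least element.

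For the existence of a dominance-minimal element of $L_\gamma$, I would argue that $L_\gamma$ is closed under the ``dominance meet'' operation, or more concretely that any two elements $c, c' \in L_\gamma$ have a common lower bound in $L_\gamma$; since $L_\gamma$ is finite this forces a unique minimum. Here one uses that rearranging the fixed content $\sort{a}$ to have a prescribed positive-part pattern $\gamma$ while keeping the $0$'s free to move corresponds, under the code-to-permutation dictionary of Section~\ref{sec:poly}, to a parabolic-type coset in Bruhat order, and Bruhat order on such cosets is a graded poset with the requisite lattice-like behaviour on the relevant interval. I expect the identification ``$\min L_\gamma \in \Qlswap(a)$'' then follows essentially by unwinding definitions: $b \in \Qlswap(a)$ precisely says $b$ is the dominance-least element of $\lswap(a)$ with its own positive part, which is exactly $\min L_{b^+}$.

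The main obstacle I anticipate is the compatibility check in the previous paragraph: showing that the dominance order, restricted to the weak compositions of fixed content and fixed positive part that are reachable by left swaps, really does have a minimum and that the up-set of that minimum (inside \emph{all} weak compositions of that content and positive part) is contained back in $\lswap(a)$. Both directions hinge on translating dominance of codes and the left-swap relation into statements about Bruhat order on $\mathcal{S}_n$ and then invoking standard facts (the subword property, the lifting property) about Bruhat intervals; the bookkeeping of where the zeros of the weak composition sit — equivalently, which values of the permutation are ``inserted greedily'' past position $n$ — is the fiddly part. Once that translation is set up cleanly, the theorem falls out by comparing atom coefficients.
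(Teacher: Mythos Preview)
The paper is a survey and does not prove this theorem; it simply cites \cite[Theorem~3.7]{Assaf.Searles:2}, so there is no in-paper proof to compare against. I will therefore just assess your plan on its own merits.

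Your reduction is the right one: since $\{\atom_c\}$ is a basis, the identity is equivalent to
\[
\lswap(a) \;=\; \bigsqcup_{b\in \Qlswap(a)} \{\,c : c^+ = b^+,\ c \ge b\,\},
\]
and everything hinges on showing (i) each slice $L_\gamma=\{c\in\lswap(a):c^+=\gamma\}$ has a unique dominance minimum, and (ii) the dominance up-set of that minimum inside $\{c:c^+=\gamma\}$ is exactly $L_\gamma$.

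You are over-worrying about (ii). For weak compositions with the same positive part $\gamma$, dominance is equivalent to the componentwise order on their position vectors $(p_1<\cdots<p_k)$ of nonzero entries: $c\ge b$ iff $p_i^c\le p_i^b$ for all $i$. Hence $c\ge b$ with $c^+=b^+$ means $c$ is obtained from $b$ by sliding positive entries leftward past zeros, which is a sequence of left swaps; so $c\in\lswap(b)\subseteq\lswap(a)$ by transitivity. That handles the containment $\{c:c^+=\gamma,\ c\ge \min L_\gamma\}\subseteq L_\gamma$ in one line, with no Bruhat subtlety.

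The genuine content is (i): that $L_\gamma$ is a \emph{principal} filter, not merely upward-closed. Your proposal does not actually prove this; ``standard facts about Bruhat intervals'' is too vague, and your appeal to the ``code-to-permutation dictionary'' conflates two different bijections (the Lehmer code versus the sorting permutation $w(a)$ used in Theorem~\ref{thm:keytoatom}). What you need is that the slice of the Bruhat interval $[\id, w(a)]$ corresponding to weak compositions with fixed positive part $\gamma$ has a unique Bruhat-maximal element---equivalently, that $L_\gamma$ is closed under the meet in the distributive lattice of position vectors. This is true but needs an argument; one clean route is to show directly that if $b,b'\in L_\gamma$ then the composition with position vector $(\max(p_i^b,p_i^{b'}))_i$ is still reachable from $a$ by left swaps, e.g.\ by exhibiting an explicit swap sequence or by using the tableau criterion for Bruhat order on the sorting permutations. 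Until you pin that down, the proof is incomplete.
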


A key polynomial $\key_a$ is a Schur polynomial if and only if the entries of $a$ are weakly increasing. In this case, by Proposition~\ref{prop:qkeyisqschur} and the definition of $\Qlswap$, the formula of Theorem~\ref{thm:keytoqkey} reduces to the formula of Theorem~\ref{thm:s2S} for the quasiSchur polynomial expansion of a Schur polynomial.

Quasikey polynomials do not have positive structure constants. This is immediate from the fact that the quasikey polynomial basis contains all the quasiSchur polynomials, which themselves do not have positive structure constants. However, there is a positive combinatorial formula (which we do not describe here) for the quasikey expansion of the product of a quasikey polynomial and a Schur polynomial \cite{Searles}, extending the analogous formula of \cite{Haglund.Luoto.Mason.vanWilligenburg:refinement} for quasiSchur polynomials.

The final basis of $\asym_n$ that we consider here is the basis of \emph{fundamental particles} introduced in \cite{Searles}. 
Note that the formula for the Demazure atom expansion of a quasikey polynomial given in Theorem~\ref{thm:qkey2atom} is identical to the formula for the monomial expansion of a monomial slide polynomial given in Definition~\ref{def:mslide2X}. The main motivating property of fundamental particles is that this same formula will give the fundamental particle expansion of a fundamental slide polynomial. 

Given a weak composition $a$, let $\LSSF(a)$ denote the set of those semistandard composition tableaux of shape $a$ satisfying the property that whenever $i<j$, every label in row $i$ is smaller than every label in row $j$. For example, $\LSSF(1,0,3)$ consists of the 3rd, 4th and 6th tableaux in Figure~\ref{fig:ASSF}.

Let $a$ be a weak composition of length $n$. The {\bf fundamental particle} associated to $a$ is given by
\[\particle_a \coloneqq \sum_{T\in \LSSF(a)}\xx^{\wt(T)}.\]

\begin{example}
Let $a=(1,0,3)$. Then 
\[\particle_{(1,0,3)} = \xx^{103} + \xx^{112} + \xx^{121}\]
\end{example}

The following is straightforward from the definitions.
\begin{theorem}[\cite{Searles}]\label{thm:slide2particle}
The fundamental slide polynomials expand positively in the fundamental particles:
For a weak composition $a$ of length $n$, we have
\[\slide_a = \sum_{\substack{b^+=a^+ \\ b\ge a}} \particle_b,\]
where the sum is over all weak compositions $b$ of length $n$ satisfying $b\ge a$ in dominance order and whose positive part is $a^+$.
\end{theorem}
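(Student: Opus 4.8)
The plan is to compare the monomial expansions of the two sides. By Theorem~\ref{thm:slide2X} we have $\slide_a = \sum_{c \ge a,\ c^+ \vDash a^+} \monomial_c$, so it suffices to show that, as $b$ ranges over weak compositions of length $n$ with $b^+ = a^+$ and $b \ge a$, the monomials of the $\particle_b$ partition precisely the set $\{\xx^c : c \ge a,\ c^+ \vDash a^+\}$, each occurring once. Concretely I would (i) describe the monomials of $\particle_b$ explicitly, (ii) define a ``collapse'' map $c \mapsto b(c)$ on weak compositions with $c^+ \vDash a^+$, and (iii) check $b(c) \ge a$ whenever $c \ge a$.

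For (i), the first step is to see that the tableaux in $\LSSF(b)$ are rigid. Let $r_1 < r_2 < \dots < r_\ell$ be the nonempty rows of $b$ (so $b^+ = (b_{r_1}, \dots, b_{r_\ell})$), and set $r_0 = 0$. In any $T \in \LSSF(b)$, condition (S.4) forces the leftmost entry of row $r_k$ to be $r_k$, and then (S.2) forces every entry of that row into $\{1,\dots,r_k\}$; the separation property defining $\LSSF$ then forces those entries into the slab $\{r_{k-1}+1,\dots,r_k\}$, since the largest entry of row $r_{k-1}$ is $r_{k-1}$. As consecutive slabs are disjoint, (S.1) holds automatically, and because the slab of a box is determined by its row, a short case check (splitting on whether the lone box of a triple lies above or below the two collinear boxes) shows every triple is inversion, so (S.3) is automatic too. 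Conversely any weak composition $\gamma^{(k)}$ of $b_{r_k}$ into $r_k - r_{k-1}$ parts with positive last part names a valid filling of row $r_k$ (arrange the parts in weakly decreasing order of the slab values). Hence $\LSSF(b)$ is in bijection with such tuples $(\gamma^{(1)},\dots,\gamma^{(\ell)})$, and the weight of the corresponding tableau is the concatenation $\gamma^{(1)}\gamma^{(2)}\cdots\gamma^{(\ell)}$. In particular each monomial $\xx^c$ of $\particle_b$ has $c^+ \vDash b^+$, and, since each $\gamma^{(k)}$ has the same sum as the $k$th block of $b$, also $c \ge b$.

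For (ii)--(iii): given $c$ with $c^+ \vDash a^+$, the refinement relation decomposes $c^+$ uniquely into consecutive blocks $\delta^{(1)},\dots,\delta^{(m)}$ with $\delta^{(k)}$ summing to $a^+_k$; tracking where the parts of $c^+$ sit inside $c$, let $r_k$ be the position of the last part of $\delta^{(k)}$ and let $b = b(c)$ have $b_{r_k} = a^+_k$ and all other entries $0$. Then $b^+ = a^+$, and restricting $c$ to each slab $\{r_{k-1}+1,\dots,r_k\}$ yields a tuple of weak compositions with positive last parts, hence a genuine $T \in \LSSF(b)$ with $\wt(T) = c$. Any pair $(b',T')$ with $(b')^+ = a^+$ and $\wt(T')=c$ has nonempty-row positions equal to the right endpoints of the unique way of cutting $c$ into consecutive blocks summing to $a^+_1,a^+_2,\dots$ with each block ending in a nonzero entry, so $(b',T') = (b,T)$; this gives uniqueness and shows no monomial outside $\{\xx^c : c^+ \vDash a^+\}$ appears. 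The remaining point, and the only place $c \ge a$ is used, is that $b(c) \ge a$. Writing $q_1 < \dots < q_m$ for the nonzero positions of $a$, the partial sums of $a$ and of $b$ are both prefix sums of the all-positive sequence $(a^+_1, a^+_2, \dots)$, so $b \ge a$ is equivalent to $r_k \le q_k$ for all $k$. Evaluating $c \ge a$ at index $q_k$ gives $\sum_{i \le q_k} c_i \ge a^+_1 + \cdots + a^+_k$; the left side is a prefix sum of the all-positive sequence $c^+$, and the prefix of $c^+$ through the last part of $\delta^{(k)}$ sums to exactly $a^+_1 + \cdots + a^+_k$, so strict monotonicity of prefix sums forces the last part of $\delta^{(k)}$ to occur weakly left of $q_k$, i.e. $r_k \le q_k$.

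The main obstacle is thus the rigidity analysis of $\LSSF(b)$ in step (i)---once one knows each nonempty row is confined to its own slab of values and that (S.1) and (S.3) come for free, everything else (the collapse map, its well-definedness, and the dominance check via positivity of the parts of $c^+$) is routine bookkeeping. Assembling the three steps shows $\{\xx^c : c \ge a,\ c^+ \vDash a^+\}$ is the disjoint union over $b \ge a$ with $b^+ = a^+$ of the monomial supports of the $\particle_b$, which is exactly the asserted identity $\slide_a = \sum_{b^+ = a^+,\ b \ge a} \particle_b$.
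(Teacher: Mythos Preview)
Your proof is correct. The paper does not actually give a proof of this theorem, stating only that it ``is straightforward from the definitions''; your argument is precisely the monomial-level verification that this phrase gestures at. You correctly identify the weights of the tableaux in $\LSSF(b)$ as the weak compositions $c$ whose restriction to each slab $[r_{k-1}+1,r_k]$ sums to $b_{r_k}$ with $c_{r_k}>0$, and your collapse map and dominance check are sound. One small point you leave implicit: you prove only the direction $c \ge a \Rightarrow b(c) \ge a$, but the converse is immediate from the observation $c \ge b(c)$ already established in step~(i), so the claimed disjoint-union decomposition follows.
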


The fundamental particles were constructed to be a refinement of the fundamental slide polynomials (with a particular positive expansion formula), as in Theorem~\ref{thm:slide2particle}; remarkably, the fundamental particles are also a refinement of the Demazure atoms. Given a weak composition $a$, define the set $\HSSF(a)$ of {\bf particle-highest} semistandard composition tableaux of shape $a$ to be the set of those $T\in \ASSF(a)$ such that for each integer $i$ appearing in $T$, either
\begin{itemize}
\item an $i$ appears in the first column, or
\item there is an $i^+$ weakly right of an $i$, where $i^+$ is the smallest integer greater than $i$ appearing in $T$.
\end{itemize}
Notice the particle-highest condition is a weakening of the quasiYamanouchi condition: every quasiYamanouchi tableau is necessarily particle-highest.

Establishing the last of the arrows shown in Figure~\ref{fig:poly}, we have the following additional positivity.
\begin{theorem}[\cite{Searles}]
The Demazure atoms expand positively in the fundamental particles:
\[\atom_a = \sum_{T\in \HSSF(a)} \particle_{\wt(T)}.\]
\end{theorem}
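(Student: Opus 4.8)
The plan is to prove the identity by a weight-preserving bijection. Unwinding the definitions $\atom_a = \sum_{T \in \ASSF(a)} \xx^{\wt(T)}$ and $\particle_b = \sum_{U \in \LSSF(b)} \xx^{\wt(U)}$, the asserted equality is equivalent to a weight-preserving bijection
\[
\ASSF(a) \;\longleftrightarrow\; \bigsqcup_{H \in \HSSF(a)} \LSSF\bigl(\wt(H)\bigr),
\]
which I would build in two stages: a ``particle-rectification'' map $\rho \colon \ASSF(a) \to \HSSF(a)$, and then, for each $H \in \HSSF(a)$, a weight-preserving bijection from the fiber $\rho^{-1}(H)$ onto $\LSSF(\wt(H))$.

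For the first stage I would introduce particle-raising operators on $\ASSF(a)$. Suppose $T \in \ASSF(a)$ fails the particle-highest condition at a value $i$, and let $i^{+}$ be the smallest value exceeding $i$ that occurs in $T$ (this exists: the largest value occurring in a semistandard composition tableau always appears in the first column, so the condition never fails there). By hypothesis no $i$ lies in column $1$, and every occurrence of $i^{+}$ lies in a strictly smaller column than every occurrence of $i$; let $e_i(T)$ be obtained by changing to $i^{+}$ the $i$ in the topmost row that contains an $i$, taking within that row the leftmost such $i$. The content here is to check $e_i(T) \in \ASSF(a)$: conditions (S.1), (S.2), (S.4) follow quickly from the position of the altered cell (its row-neighbor to the left, if any, already exceeds $i$, hence is at least $i^{+}$; no $i^{+}$ lies in its column), while (S.3) requires running through the triples meeting the altered cell and invoking the hypothesis that all $i^{+}$'s sit strictly left of all $i$'s. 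Iteration of $e$-moves terminates, since each move strictly decreases the content weak composition in dominance order and there are only finitely many weak compositions of length $n$ of a given size, and a tableau admitting no $e$-move is precisely a particle-highest one. Finally I would verify a local-confluence condition so that, by Newman's lemma, each $T$ has a unique $e$-normal form, which I define to be $\rho(T)$.

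For the second stage, fix $H \in \HSSF(a)$ and put $b \coloneqq \wt(H)$; the fiber $\rho^{-1}(H)$ is exactly the set of tableaux obtainable from $H$ by the inverse lowering operators. First, $\LSSF(b)$ is multiplicity-free by weight: in any such tableau, comparing the first-column value $r$ of a nonempty row with the ``increasing across rows'' requirement forces the entries of that row into the interval $(r', r]$, where $r'$ is the index of the previous nonempty row; hence each value is confined to a single prescribed row, so the weight determines the entry-multiset of every row, and each row, being weakly decreasing, is then uniquely reconstructed. Consequently it suffices to prove that the set of weights occurring in $\rho^{-1}(H)$ coincides with the set of weights occurring in $\LSSF(b)$, and that $\rho^{-1}(H)$ is itself multiplicity-free by weight: the bijection then sends $T \in \rho^{-1}(H)$ to the unique element of $\LSSF(b)$ of weight $\wt(T)$, which amounts concretely to repacking the content of $T$ into the canonical shape $b$. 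I would establish these two facts by tracking the effect of a single lowering move, which transfers one box of weight from coordinate $i^{+}$ down to coordinate $i$, and matching sequences of such moves issued from $H$ against the corresponding chains of lowerings of weakly decreasing rows inside $\LSSF(b)$.

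The principal obstacle is the interaction of the local moves with condition (S.3): proving that $e_i$ (equivalently, its inverse) preserves the inversion-triple condition, and that distinct reduction sequences are confluent, is where the genuine combinatorial work lies. A secondary point is identifying precisely which weights arise in a fiber $\rho^{-1}(H)$ and checking that this set coincides with the content set of $\LSSF(\wt(H))$; once confluence is in hand, I expect this to come down to a routine induction on the number of lowering moves.
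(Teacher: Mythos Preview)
This survey states the theorem without proof, merely citing \cite{Searles}, so there is no in-paper argument to compare against. I will comment on your plan and on how it relates to the argument in the source.

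Your overall bijective strategy is the right one and matches the shape of the proof in \cite{Searles}. The difference is in how $\rho$ is built. You propose single-cell raising moves $e_i$ (change one $i$ to $i^{+}$) and then appeal to Newman's lemma for a unique normal form. This buys you nothing and costs you a confluence argument you never actually give. After one of your $e_i$ moves the newly created $i^{+}$ may sit weakly right of a remaining $i$ in a lower row, so the particle-highest condition at $i$ can become satisfied before all $i$'s have been raised; the interaction of this with moves at other values is exactly where local confluence would need real work. The argument in \cite{Searles} sidesteps this entirely by relabelling globally: whenever a value $i$ fails the particle-highest condition, replace \emph{every} $i$ by $i^{+}$ at once, and iterate. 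This is deterministic, terminates for the same dominance reason you cite, and the check that a simultaneous replacement preserves (S.1)--(S.4) is a single clean verification using only that all $i^{+}$'s lie strictly left of all $i$'s and that no value of $T$ lies strictly between $i$ and $i^{+}$. (Your case analysis for (S.3) already essentially does this; it just doesn't need to be repeated across a confluence diamond.)

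Your second stage is correct in outline but understated. That $\LSSF(b)$ is multiplicity-free by weight is fine, and your reasoning for it is correct. But the assertion that the weight-set of a fibre $\rho^{-1}(H)$ coincides with the weight-set of $\LSSF(\wt(H))$ is the content of the theorem, not a ``routine induction on the number of lowering moves.'' With the global definition of $\rho$ one instead writes down the inverse explicitly: given $H\in\HSSF(a)$ and $U\in\LSSF(\wt(H))$, for each value $r$ occurring in $H$ match the boxes of $H$ carrying $r$ with the boxes in row $r$ of $U$ (both sets have size $\wt(H)_r$ and a canonical left-to-right order) and relabel each such box of $H$ by the corresponding entry of $U$. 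One then checks directly that the result lies in $\ASSF(a)$ and that this construction and $\rho$ are mutually inverse; this is where the triple condition (S.3) is genuinely exercised, and it is a finite case check rather than an induction.
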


Fundamental particles do not have positive structure constants, however, in analogy with quasikey polynomials and Demazure atoms, there is a positive combinatorial formula for the fundamental particle expansion of the product of a fundamental particle and a Schur polynomial given in \cite{Searles}.

\section{The mirror worlds: $K$-theoretic polynomials}

A trend in modern Schubert calculus is to look at $\mathrm{Flags}_n$, Grassmannians, and other generalized flag varieties, not through the lens of ordinary cohomology as in Sections~\ref{sec:sym}--\ref{sec:poly}, but through the sharper yet more mysterious lenses of other complex oriented cohomology theories \cite{Ganter.Ram,Calmes.Zainoulline.Zhong,Lenart.Zainoulline:elliptic, Lenart.Zainoulline:root}.

Particularly well studied over the past 20 years are combinatorial aspects of the $K$-theory rings of these spaces. Early work here includes \cite{Lascoux.Schutzenberger,Lascoux.Schutzenberger:symmetry,Fomin.Kirillov,Fulton.Lascoux}; however, the area only became very active after the influential work of \cite{Lenart, Buch}.
Following \cite{Bressler.Evens, Fomin.Kirillov, Hudson}, it turns out that one can slightly generalize this setting to \emph{connective $K$-theory} with almost no extra combinatorial complexity (indeed, in some ways the more general combinatorics seems easier). In general, each complex oriented cohomology theory is determined by its formal group law, which describes how to write the Chern class of a tensor product of two line bundles in terms of the two original Chern classes. In the case of connective $K$-theory, the formal group law is
\begin{equation}\label{eq:fgl}
c_1(L \otimes M) = c_1(L) + c_1(M) + \beta c_1(L) c_1(M),
\end{equation}
where $\beta$ is a formal parameter and $L,M$ are complex line bundles on the space in question. In this notation, the ordinary cohomology ring is recovered by setting $\beta = 0$ and the ordinary $K$-theory ring is recovered (up to convention choices) by setting $\beta = -1$.

Just as the Schubert classes in the ordinary cohomology of $\mathrm{Flags}_n$ are represented by the Schubert polynomials (as described in Section~\ref{sec:poly}), we would like to have such polynomial representatives for the corresponding connective $K$-theory classes.
These are provided by the \emph{$\beta$-Grothendieck polynomials} $\{ \groth_a \}$ of S.~Fomin and A.~Kirillov \cite{Fomin.Kirillov}, as identified in \cite{Hudson}. These polynomials form a basis of $\poly_n[\beta]$, where $\beta$ is the formal parameter from Equation~(\ref{eq:fgl}). This basis is homogeneous if the parameter $\beta$ is understood to live in degree $-1$. Specializing at $\beta =0$, one recovers the Schubert basis $\{ \schub_a \}$ of $\poly_n$.  The usual Grothendieck polynomials of A.~Lascoux and M.-P.~Sch\"{u}tzenberger \cite{Lascoux.Schutzenberger} are realized at $\beta = -1$. (To help the reader track relations among bases, we deviate from established practice by denoting connective $K$-analogues by applying an `overbar' to their cohomological specializations.) Like the Schubert basis of $\poly_n$, the $\beta$-Grothendieck polynomial basis has positive structure coefficients; this is, of course, currently only known by geometric arguments \cite{Brion}, as there is no combinatorial proof of this fact even in the case $\beta=0$.

Intersecting $\{ \groth_a \}$ with $\sym_n[\beta]$ yields the basis $\{ \sgroth_\lambda \}$ of \emph{symmetric Grothendieck polynomials}. These represent connective $K$-theory Schubert classes on Grassmannians. In this setting, like the Schur polynomial setting, a number of Littlewood-Richardson rules for $\{ \sgroth_\lambda \}$ are now known (e.g., \cite{Vakil,Thomas.Yong:K,Pechenik.Yong:genomic}), following the first found by A.~Buch \cite{Buch}. 

The remaining families of polynomials discussed in Sections~\ref{sec:sym}--\ref{sec:poly} are not currently understood well in term of cohomology. Remarkably, however, from a combinatorial perspective they all appear to have natural `connective $K$-analogues'. That is, for each basis, there is a combinatorially-natural $\beta$-deformation that is homogeneous (with the understanding that $\beta$ has degree $-1$), forms a basis of $\poly_n[\beta]$, and (at least conjecturally) shares the positivity properties of the original basis. These deformed bases are presented in Table~\ref{tab:mytab}. 

It is a mystery as to whether these various apparently $K$-theoretic families of polynomials in fact have an geometric interpretation in terms of $K$-theory. If they did, then presumably the $\beta=0$ specialization considered in the previous sections would similarly have a cohomological interpretation. Such an interpretation would be rather surprising, as currently these specializations are only understood through combinatorics and (in some cases) representation theory. Alternatively, perhaps there is a general representation-theoretic construction that yields all of these various $\beta$-deformations. No such construction is currently known, but for some ideas along these lines see \cite{Galashin, Monical.Pechenik.Scrimshaw, Pechenik:genomic}.

\begin{table}[htb]
\scalebox{0.7}{
\begin{tabular}{|l|l|l|}
\hline
$\scolor{\sym_n}$ & Schur polynomial $s_\lambda$ & symmetric Grothendieck polynomial $\sgroth_\lambda$ \cite{Buch, Monical.Pechenik.Scrimshaw} \\
\hline
 $\qcolor{\qsym_n}$	& monomial quasisymmetric polynomial $M_\alpha$ & multimonomial polynomial $\overline{M}_\alpha$ \cite{Lam.Pylyavskyy} \\
 & fundamental quasisymmetric polynomial $F_\alpha$ & multifundamental polynomial $\overline{F}_\alpha$ \cite{Lam.Pylyavskyy, Patrias, Pechenik.Searles} \\
 & quasiSchur polynomial $S_\alpha$ & quasiGrothendieck polynomial $\overline{S}_\alpha$ \cite{Monical, Monical.Pechenik.Searles} \\
 \hline
 $\pcolor{\poly_n}$ & Schubert polynomial $\schub_a$ & Grothendieck polynomial $\groth_a$ \cite{Lascoux.Schutzenberger, Fomin.Kirillov, Knutson.Miller} \\
 & Demazure character/key polynomial $\key_a$ & Lascoux polynomial $\lascoux_a$ \cite{Ross.Yong, Kirillov:notes, Monical, Monical.Pechenik.Searles} \\
 & quasikey polynomial $\qkey_a$ & quasiLascoux polynomial $\qlascoux_a$ \cite{Monical.Pechenik.Searles} \\
 & Demazure atom/standard basis $\atom_a$ & Lascoux atom $\lascouxatom_a$ \cite{Monical, Monical.Pechenik.Searles} \\
 & fundamental particle/pion $\particle_a$ & kaon $\kaon_a$ \cite{Monical.Pechenik.Searles} \\
 & fundamental slide polynomial $\slide_a$ & glide polynomial $\glide_a$ \cite{Pechenik.Searles, Monical.Pechenik.Searles} \\
\hline
\end{tabular}}
\caption{Bases from Sections~\ref{sec:sym}--\ref{sec:poly}, together with their corresponding connective $K$-analogues. For each $K$-theoretic family of polynomials, we have a given a few major references; however, these references are generally not exhaustive.}
\label{tab:mytab}
\end{table}

For details of the definitions of thebases from Table~\ref{tab:mytab} and their relations, see the references given there, especially \cite{Monical.Pechenik.Searles} which contains a partial survey. Here, we only briefly sketch hints of this theory. (However, the theory is in many ways exactly parallel to that given in Sections~\ref{sec:sym}--\ref{sec:poly}, so the astute reader can likely guess approximations to many of the structure theorems.)

The notion of semistandard \emph{set-valued} skyline fillings was introduced in \cite{Monical}, and employed there to provide an explicit combinatorial definition of the \emph{Lascoux polynomials} $\lascoux_a$ and the \emph{Lascoux atoms} $\lascouxatom_a$: $K$-theoretic analogues of key polynomials and Demazure atoms, respectively.  Analogous $K$-theoretic analogues of key polynomials have also been studied in \cite{Lascoux:transition,Ross.Yong,Kirillov:notes}. 

The basis $\glide_a$ of \emph{glide polynomials} was introduced in \cite{Pechenik.Searles}. Glide polynomials are simultaneously a $K$-theoretic analogue of the fundamental slide basis and a polynomial lift of the \emph{multi-fundamental quasisymmetric} basis \cite{Lam.Pylyavskyy} of quasisymmetric polynomials. Remarkably, the glide basis also has positive structure constants, which can be described in terms the \emph{glide product} \cite{Pechenik.Searles} of weak compositions. The glide product is a simultaneous generalization of the slide product of \cite{Assaf.Searles} and the \emph{multi-shuffle} product of \cite{Lam.Pylyavskyy} on strong compositions.

The \emph{quasiLascoux} basis $\qlascoux_a$ and \emph{kaon} basis $\kaon_a$ were introduced in \cite{Monical.Pechenik.Searles}. These are $K$-analogues of the quasi-key and fundamental particle bases. Remarkably, the positivity relations between the bases in Figure~\ref{fig:poly} have been proven to hold for their $K$-analogues mentioned above \cite{Pechenik.Searles, Monical.Pechenik.Searles}, with the exception of the expansion of Grothendieck polynomials in Lascoux polynomials, whose positivity remains conjectural.

As outlined in the previous section, the product of an element of any basis in Figure~\ref{fig:poly} with a Schur polynomial expands positively in that basis. It would be interesting to know if the analogous result is true in the $K$-theory world: that product of an element of a $K$-theoretic analogue and a symmetric Grothendieck polynomial expands positively in that $K$-theoretic basis. This is obviously true for the Grothendieck basis by geometry. It is also true for the glide basis, since the glide basis has positive structure constants and refines Grothendieck polynomials. We believe this question remains, however, open for the Lascoux, quasiLascoux, Lascoux atom and kaon bases.

\section*{Acknowledgements}
OP was partially supported by a Mathematical Sciences Postdoctoral Research Fellowship (\#1703696) from the National Science
Foundation.

We are grateful to the many people who taught us about polynomials and, over many years, have influenced the way we think about the three worlds. It seems hopeless to provide a full accounting of our influences, but in particular we wish to thank Sami Assaf, Alain Lascoux, Cara Monical, Stephanie van Willigenburg, and our common advisor Alexander Yong.

%%%%%%%%%%%%%%%%%%%%%%%%%%%%%%%%%%%%%%%%%%%%%%%%%%%%%%%%%%%%
%
%  Bibliography
%
%%%%%%%%%%%%%%%%%%%%%%%%%%%%%%%%%%%%%%%%%%%%%%%%%%%%%%%%%%%%

\bibliographystyle{amsalpha} 
\bibliography{polynomialSurvey}

\end{document}